\numberwithin{equation}{section}
\numberwithin{figure}{section}
\theoremstyle{plain}
\newtheorem{thm}{\protect\theoremname}
  \theoremstyle{definition}
  \newtheorem{defn}[thm]{\protect\definitionname}
  \theoremstyle{remark}
  \newtheorem{rem}[thm]{\protect\remarkname}
  \theoremstyle{plain}
  \newtheorem{lem}[thm]{\protect\lemmaname}
  \theoremstyle{plain}
  \newtheorem{prop}[thm]{\protect\propositionname}
  \theoremstyle{plain}
  \newtheorem{cor}[thm]{\protect\corollaryname}
  \providecommand{\corollaryname}{Corollary}
  \providecommand{\definitionname}{Definition}
  \providecommand{\lemmaname}{Lemma}
  \providecommand{\propositionname}{Proposition}
  \providecommand{\remarkname}{Remark}
\providecommand{\theoremname}{Theorem}
\begin{document}

\title{Divided Differences \& Restriction Operator on Paley-Wiener Spaces
$PW_{\tau}^{p}$ for $N-$Carleson Sequences}

\author{Fr\'ed\'eric Gaunard}

\keywords{Divided differences, Carleson sequences, interpolation, Paley-Wiener
spaces, Discrete Muckenhoupt condition.}

\subjclass[2000]{30E05, 42A15, 44A15.}
\begin{abstract}
For a sequence of complex numbers $\Lambda$
we consider the restriction operator $R_{\Lambda}$ defined on Paley-Wiener
spaces $PW_{\tau}^{p}$ ($1<p<\infty$). Lyubarskii and Seip gave necessary and sufficient
conditions on $\Lambda$ for $R_{\Lambda}$ to be an isomorphism between
$PW_{\tau}^{p}$ and a certain weighted $l^{p}$ space. The Carleson condition
appears to be necessary. We extend their result to $N-$Carleson sequences
(finite unions of $N$ disjoint Carleson sequences). More precisely, we
give necessary and sufficient conditions for $R_{\Lambda}$ to be
an isomorphism between $PW_{\tau}^{p}$ and an appropriate sequence space
involving divided differences. 
\end{abstract}

\date{\today}

\maketitle

\section{Introduction}

Let $X$ be a Banach space of analytic functions defined on a domain
$\Omega$ of the complex plane and $\Lambda$ a sequence of points lying in $\Omega$.
The \emph{restriction operator} $R_{\Lambda}$ associated to $\Lambda$
is defined on $X$ by 
\[
R_{\Lambda}:X\ni f\mapsto\left(f\left(\lambda\right)\right)_{\lambda\in\Lambda}\in\mathbb{C}^{\Lambda}.
\]
Our aim is to describe the range of $R_{\Lambda}$, denoted
by $X\vert\Lambda$, as well as the injectivity of $R_{\Lambda}$.
This problem is related to interpolation problems in $X$
and to geometrical properties of reproducing kernels in $X^{\star}$.
See \cite{HNP81}, \cite[Part D]{Ni02b} or \cite{Se04}.

\bigskip

In the late 1950s and early 1960s, Carleson \cite{Ca58} ($p=\infty$) and Shapiro
and Shields \cite{SS61} ($1\leq p<\infty$) showed that $R_{\Lambda}$
is surjective from the Hardy space onto a suitable weighted $l^{p}$ space
if and only if $\Lambda$ satisfies a certain separation condition,
the so-called \emph{Carleson} condition (more precise definitions
below). Notice that, in Hardy spaces, as soon as the sequence satisfies
the Blaschke condition, $R_{\Lambda}$ cannot be injective.

The results of Carleson and Shapiro-Shields have been generalized to finite unions
of Carleson sequences (which are called $N-$\emph{Carleson} sequences)
by Vasyunin \cite{Va84} ($p=\infty$) and Hartmann \cite{Ha96b}
($1<p<\infty$). A similar result has
been obtained by Bruna, Nicolau and \O yma \cite{BNO96}.
In this more general situation the description of the range of $R_{\Lambda}$ 
involves divided differences. 

Many authors like Hrushev, Nikolskii, Pavlov \cite{HNP81} or Minkin
\cite{Mi92}, have investigated interpolation problems in Paley-Wiener
spaces using tools from operator theory (for instance
invertibility criteria for a suitable Toeplitz operator)
since the 1970s. Note that these spaces 
can be considered as special cases of backward shift invariant 
subspaces in Hardy spaces.
More recently,
Lyubarskii and Seip \cite{LS97} have characterized the sequences $\Lambda$
for which the associated restriction operator is an isomorphism between
the Paley-Wiener space and an appropriate weighted $l^{p}$ space. 
Their proof is in a sense more elementary and allows to consider sequences
defined on the whole complex plane while the methods of
Hrushev, Nikolskii, Pavlov intrinsically restrict the problem to sequences in
a half-plane. 

Here we investigate a generalization of Lyubarskii and Seip's result to $N-$Carleson sequences, in the spirit of Hartmann. Observe first that the Carleson condition turns 
out to be necessary for the classical interpolation problem in the Paley-Wiener space.
Now, starting from an $N-$Carleson sequence
$\Lambda$, we want to find necessary and sufficient conditions on
$\Lambda$ for $R_{\Lambda}$ to be an isomorphism between the Paley-Wiener
space and an appropriate sequence space involving now divided differences.

\bigskip

Let us  fix the notation and the results we mentioned above. We first
recall the definition of the \emph{Hardy space, for $1\leq p<\infty$,}
\[
H^{p}\left(\mathbb{C}_{a}^{\pm}\right):=\left\{ f\in\text{Hol}\left(\mathbb{C}_{a}^{\pm}\right):\:\sup_{y\gtrless a}\int_{\mathbb{R}}\left|f\left(x+iy\right)\right|^{p}dx<\infty\right\} 
\]
on the half-plane
\[
\mathbb{C}_{a}^{\pm}:=\left\{ z\in\mathbb{C}:\;\text{Im}\left(z\right)\gtrless a\right\} ,\quad\left(a\in\mathbb{R}\right).
\]
For $p=\infty$, 
\[
H^{\infty}\left(\mathbb{C}_{a}^{\pm}\right):=\left\{ f\in\text{Hol}\left(\mathbb{C}_{a}^{\pm}\right):\:\sup_{z\in\mathbb{C}_{a}^{\pm}}\left|f\left(z\right)\right|<\infty\right\} .
\]
For short
we will write $\mathbb{C}^{\pm}:=\mathbb{C}_{0}^{\pm}$ and $H_{\pm}^{p}:=H^{p}\left(\mathbb{C}^{\pm}\right)$.
A function $I\in H^{\infty}\left(\mathbb{C}_{a}^{\pm}\right)$ 
satisfying $\left|I\left(x+ia\right)\right|=1$
a.e. $x\in\mathbb{R}$ is called an \emph{inner function.}

\bigskip

As previously mentioned, Carleson \cite{Ca58}, Shapiro and Shields
\cite{SS61} solved the interpolation problem in the Hardy space. Their
results were obtained in the unit disk, but translate clearly 
to any half-plane. Setting
\[
l^{p}\left(\left|\text{Im}\left(\lambda_{n}\right)-a\right|\right):=\left\{ u=\left(u_{n}\right)_{n\geq1}:\;\sum_{n\geq1}\left|\text{Im}\left(\lambda_{n}\right)-a\right|\left|u_{n}\right|^{p}<\infty\right\} ,
\]
 we can state their result as follows. If $\Lambda=\left\{ \lambda_{n}:\: n\geq1\right\} \subset\mathbb{C}_{a}^{\pm}$,
then
\[
H^{p}\left(\mathbb{C}_{a}^{\pm}\right)\vert\Lambda=l^{p}\left(\left|\text{Im}\left(\lambda_{n}\right)-a\right|\right)
\]
if and only if $\Lambda$ satisfies the \emph{Carleson} condition
\begin{equation}
\inf_{\lambda\in\Lambda}\prod_{\underset{\mu\not=\lambda}{\mu\in\Lambda}}\left|\frac{\lambda-\mu}{\lambda-\overline{\mu}-2ia}\right|>0.\label{eq:Carleson}
\end{equation}
Such sequences will be simply called \emph{Carleson sequences}.

We consider now the \emph{Paley-Wiener space} $PW_{\tau}^{p}$ (for
$1\leq p<\infty$) which consists of all entire functions of exponential
type at most $\tau$ satisfying
\[
\left\Vert f\right\Vert _{p}^{p}=\int_{\mathbb{R}}\left|f\left(x\right)\right|^{p}dx<\infty.
\]
It is well-known (see e.g. \cite{Le96}) that in the case $p=2$,
the Fourier transform is an isometric isomorphism between $PW_{\tau}^{2}$
and $L^{2}\left(-\tau,\tau\right)$ which allows to reformulate
the problem in terms of geometrical properties of exponentials
in $L^{2}$ (we still refer to \cite{HNP81}). From the Plancherel-Poly\`a
inequality (see Proposition \ref{plancherel-polya} below), it follows
that $PW_{\tau}^{p}=e^{-i\tau\cdot}K_{I^{\tau}}^{p}$,
where 
\[
K_{I^{\tau}}^{p}:=H_{+}^{p}\cap\overline{I^{\tau}}H_{-}^{p}
\]
is the backward shift invariant subspace associated with the inner function
$I^{\tau}\left(z\right):=\exp\left(2i\tau z\right)$, $z\in\mathbb{C}^{+}$.
In particular, the Paley-Wiener space can be considered as a subspace of
the Hardy space.

\bigskip

Luybarskii and Seip \cite{LS97} gave necessary and sufficient conditions
for $R_{\Lambda}$ to be an isomorphism from $PW_{\tau}^{p}$ onto
the weighted sequence space 
$l^{p}\left(e^{-p\tau|\text{Im}(\lambda_{n})|}\left(1+|\text{Im}(\lambda_{n})|\right)\right)$.
Their proof is based on the boundedness of the Hilbert transform in
certain weighted Hardy space. 

Recall that the \emph{Hilbert transform} $\mathcal{H}$ is defined by
\begin{equation}
\mathcal{H}f(z)=\int_{-\infty}^{+\infty}\frac{f(t)}{t-z}dt,\label{eq:Hilbert transform def}
\end{equation}
where the integral has to be understood as a principle value integral for real $z$. 
It is known (see e.g \cite{HMW73} and \cite{Gar81})
that, if $w>0$, $\mathcal{H}$ is bounded from the weighted space
\[
L^{p}(w):=\left\{ f\text{ meas. on }\mathbb{R}:\;\int_{\mathbb{R}}\left|f\right|^{p}wdm<\infty\right\} 
\]
into itself, if and only if $w$ satisfies the \emph{Muckenhoupt}
$(A_{p})$ condition
\[
(A_{p})\qquad\qquad\sup_{I}\left(\frac{1}{|I|}\int_{I}w\right)\left(\frac{1}{|I|}\int_{I}w^{-\frac{1}{p-1}}\right)^{\frac{p}{p-1}}<\infty,
\]
where the supremum is taken over all intervals of finite length. In
\cite{LS97}, the authors also introduce the \emph{discrete Hilbert
transform }as follows. For fixed $\epsilon>0$ and two sequences $\Gamma:=\left\{ \gamma_{n}\right\} _{n}$
and $\Sigma:=\left\{ \sigma_{n}\right\} _{n}$ satisfying $\left|\gamma_{n}-\sigma_{n}\right|=\epsilon$,
and $a=\left(a_{n}\right)_{n}$,
\[
\left(\mathcal{H}_{\Gamma,\Sigma}\left(a\right)\right)_{n}:=\sum_{j}\frac{a_{j}}{\gamma_{j}-\sigma_{n}}.
\]
According to \cite[Lemma 1]{LS97}), $\mathcal{H}_{\Gamma,\Sigma}$
is bounded from $l^{p}(w_{n})$ into itself if and only if $\left(w_{n}\right)_{n}$
satisfies the \emph{discrete Muckenhoupt condition 
\[
\left(\mathfrak{A}_{p}\right)\qquad\qquad\sup_{\underset{n>0}{k\in\mathbb{Z}}}\left(\frac{1}{n}\sum_{j=k+1}^{k+n}w_{j}\right)\left(\frac{1}{n}\sum_{j=k+1}^{k+n}w_{j}^{-1/\left(p-1\right)}\right)^{p-1}<\infty.
\]
}
\begin{defn}
A sequence $\Lambda\subset\mathbb{C}$ satisfies the condition $(LS)_{\tau,p}$
for $\tau>0$ and $1<p<\infty$, if the following
set of conditions hold:
\begin{itemize}
\item[(i)] $\:\forall a\in\mathbb{R},\;\Lambda\cap\mathbb{C}_{a}^{\pm}$ satisfies
the Carleson condition (\ref{eq:Carleson});
\item[(ii)] The sequence is \emph{relatively dense}:$\;\exists r>0$, $\forall x\in\mathbb{R}$,
\[
d(x,\Lambda):=\text{\ensuremath{\inf}}_{\lambda\in\Lambda}\left|x-\lambda\right|<r;
\]

\item[(iii)]  The limit 
\[
S(z)=\lim_{R\to\infty}\prod_{|\lambda|<R}\left(1-\frac{z}{\lambda}\right)
\]
exists and defines an entire function of exponential type $\tau$; 
\item[(iv)] The function $x\mapsto\left(\frac{|S(x)|}{d(x,\Lambda)}\right)^{p}$
satisfies $(A_{p})$.
\end{itemize}
\end{defn}

Note that if $0\in\Lambda$, then the corresponding factor in $(iii)$
reduces to $z$. In order to not complicate the notation we shall assume
in all what fallows that $0\not\in\Lambda$ which we can do without
loss of generality (for instance, by shifting the sequence). We are now
in a position to state the Lyubarski-Seip theorem \cite[Theorem 1]{LS97}.
\begin{thm}
\label{thm:(Lyubarskii-Seip,-1997)}(Lyubarskii-Seip). Let $\Lambda\subset\mathbb{C}$,
$\tau>0$ and $1<p<\infty$. The following assertions are equivalent.

$(1)$ $R_{\Lambda}$ is an isomorphism from $PW_{\tau}^{p}$ onto
$l^{p}\left(e^{-p\tau|\text{\emph{Im}}(\lambda)|}\left(1+|Im(\lambda)|\right)\right);$

$(2)$ $\Lambda$ satisfies $(LS)_{\tau,p}$.\end{thm}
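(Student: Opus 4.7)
The strategy is to reduce the problem to an interpolation problem in Hardy spaces. Recall the identification $PW_{\tau}^{p}=e^{-i\tau\cdot}K_{I^{\tau}}^{p}\subset H_{+}^{p}\cap H_{-}^{p}$, so that functions in $PW_{\tau}^{p}$ have boundary values on $\mathbb{R}$ and their traces on $\Lambda\cap\mathbb{C}^{\pm}$ must obey Carleson--Shapiro--Shields-type estimates. The generating function $S$ of condition $(iii)$ plays the role of a ``Blaschke product'': since $S$ has exponential type $\tau$ and simple zeros exactly at $\Lambda$, the natural candidate for an inverse of $R_{\Lambda}$ is the Lagrange-type interpolation series
\[
L_{\Lambda}(a)(z)=\sum_{\lambda\in\Lambda}a_{\lambda}\,\frac{S(z)}{S'(\lambda)(z-\lambda)}.
\]
The weight $e^{-p\tau|\mathrm{Im}(\lambda)|}(1+|\mathrm{Im}(\lambda)|)$ is forced upon us because $|S'(\lambda)|\asymp e^{\tau|\mathrm{Im}(\lambda)|}(1+|\mathrm{Im}(\lambda)|)^{-1/p}\cdot d(\lambda,\Lambda\setminus\{\lambda\})$ along $\Lambda$, using the Plancherel--Polya inequality and the Carleson condition satisfied by $\Lambda\cap\mathbb{C}_{a}^{\pm}$ for each $a$.

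For the implication $(2)\Rightarrow(1)$, boundedness of $R_{\Lambda}$ into the weighted $l^{p}$ space is essentially an application of Plancherel--Polya combined with the half-plane Carleson condition $(i)$. The more delicate half is showing $L_{\Lambda}$ is bounded into $PW_{\tau}^{p}$. I would first truncate the series and control the partial sums on the real line: on $\mathbb{R}$, $L_{\Lambda}(a)(x)=S(x)\sum_{\lambda}\frac{a_{\lambda}/S'(\lambda)}{x-\lambda}$, so the sum is morally the action of a discrete-to-continuous Hilbert transform. Pairing the discrete Muckenhoupt $(\mathfrak{A}_{p})$ estimate of \cite[Lemma~1]{LS97} with the continuous $(A_{p})$-boundedness of $\mathcal{H}$ on $L^{p}((|S|/d(\cdot,\Lambda))^{p})$ gives the required norm inequality
\[
\int_{\mathbb{R}}|L_{\Lambda}(a)(x)|^{p}\,dx\lesssim\sum_{\lambda\in\Lambda}\frac{|a_{\lambda}|^{p}}{|S'(\lambda)|^{p}}\asymp\|a\|_{l^{p}(e^{-p\tau|\mathrm{Im}\,\lambda|}(1+|\mathrm{Im}\,\lambda|))}^{p}.
\]
Exponential type $\tau$ of $L_{\Lambda}(a)$ follows from the type of $S$ and a Phragm\'en--Lindel\"of-type argument on the partial sums; injectivity of $R_{\Lambda}$ on $PW_{\tau}^{p}$ is forced by relative density $(ii)$, since a non-trivial function vanishing on a relatively dense set of type $\tau$ would have too many zeros.

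For $(1)\Rightarrow(2)$, each item of $(LS)_{\tau,p}$ must be extracted from the isomorphism. Interpreting $PW_{\tau}^{p}$ as a subspace of $H_{\pm}^{p}(\mathbb{C}_{a}^{\pm})$ and using the Carleson--Shapiro--Shields characterization, the surjectivity of $R_{\Lambda|\Lambda\cap\mathbb{C}_{a}^{\pm}}$ yields the half-plane Carleson condition $(i)$. Relative density $(ii)$ comes from injectivity via an explicit construction of a small perturbation of $S$. To get $(iii)$, one uses that an isomorphism must produce a well-defined generating function from the sequence of reproducing kernels; equivalently, the biorthogonal family to the canonical basis of $l^{p}(\ldots)$ gives functions $S(z)/(S'(\lambda)(z-\lambda))$ whose ``limit'' assembles into $S$, forcing convergence of the canonical product at the prescribed type. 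Finally, the Muckenhoupt condition $(iv)$ is read off the two-sided estimate for the norm of the inverse isomorphism applied to indicator sequences, compared with the $(A_{p})$-characterization of $\mathcal{H}$ recalled above.

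The principal obstacle will be the convergence and boundedness of the interpolating series $L_{\Lambda}(a)$: in contrast to the classical Carleson case one cannot group $\Lambda$ into a single Blaschke product with absolutely convergent product formula, and the conditionally convergent factorization of $S$ forces one to control cancellations through Hilbert transform techniques. Matching the passage from the discrete $(\mathfrak{A}_{p})$ condition on $\Lambda$ to the continuous $(A_{p})$ condition on $(|S|/d(\cdot,\Lambda))^{p}$ is the technical heart of the proof, and it is exactly what the hypothesis $(iv)$ makes possible.
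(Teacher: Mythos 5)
Note first that the paper does not prove this theorem: it is quoted from \cite{LS97}, and its proof scheme reappears only in the proof of the generalization, Theorem \ref{main result}, in Section \ref{Proof-of-Theorem main}. Measured against that, your overall architecture -- the Lagrange-type series $\sum_{\lambda}a_{\lambda}S(z)/(S'(\lambda)(z-\lambda))$, reduction of its $L^{p}(\mathbb{R})$ bound to Hilbert-transform estimates governed by the $(A_{p})$ and $(\mathfrak{A}_{p})$ conditions, Phragm\'en--Lindel\"of for the uniqueness part -- is the right one. But two of your steps are genuinely wrong or circular. The injectivity claim is the clearest gap: you assert that injectivity of $R_{\Lambda}$ is ``forced by relative density $(ii)$, since a non-trivial function vanishing on a relatively dense set of type $\tau$ would have too many zeros.'' This fails: $\mathbb{Z}$ is relatively dense with $r=1$, yet $z\mapsto\sin(\pi z)/z$ lies in $PW_{\pi}^{p}$ and vanishes on $\mathbb{Z}\setminus\{0\}$; when $\tau$ is large relative to $1/r$, relative density puts no obstruction on the zero set. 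The correct argument (mirrored in the injectivity part of Section \ref{Proof-of-Theorem main}) writes $f=\phi S$ with $\phi$ entire of type $0$, needs a \emph{lower} bound on $|S|$ away from $\Lambda$ to bound $\phi$ on $i\mathbb{R}$ -- a bound extracted from condition $(iv)$ via the outer factorization of $e^{i\tau\cdot}S$ and the $H_{+}^{q}$ membership of $1/(G(z)(z+i))$ -- and then kills the resulting constant using $\int_{\mathbb{R}}|S|^{p}=\infty$, which again comes from $(iv)$ (an $(A_{p})$ weight on $\mathbb{R}$ has infinite mass). Injectivity therefore uses $(iv)$ essentially, not $(ii)$.

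The second problem is the two-sided asymptotic $|S'(\lambda)|\asymp e^{\tau|\mathrm{Im}(\lambda)|}(1+|\mathrm{Im}(\lambda)|)^{-1/p}\,d(\lambda,\Lambda\setminus\{\lambda\})$, which you invoke both to ``force'' the weight and to identify $\sum_{\lambda}|a_{\lambda}|^{p}/|S'(\lambda)|^{p}$ with the weighted $l^{p}$ norm. It does not follow from Plancherel--Poly\`a and the half-plane Carleson condition: $S\notin PW_{\tau}^{p}$ (indeed $\int_{\mathbb{R}}|S|^{p}=\infty$), so Bernstein-type estimates do not even give the upper bound directly, and the lower bound is essentially equivalent to the boundedness of the interpolating series you are trying to prove. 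In both \cite{LS97} and the paper's generalization, this is precisely where the Muckenhoupt condition does its work (via the points $\theta_{n}\in\partial\Omega_{n}$ with $|S(\theta_{n})|=\delta\omega_{n}$ and the lemma comparing $\omega_{n}^{\alpha}\omega_{n+1}^{1-\alpha}$ with $|S(x)|/d_{N}(x)$), so asserting the estimate as a free-standing preliminary fact is circular. The surjectivity mechanism in the paper -- duality against $h\in H^{q}$, the residue theorem over the rectangles $R_{n}$, and boundedness of $\mathcal{H}$ on the weighted space attached to $|\tilde{G}|^{-q}$ -- is also absent from your sketch, but that is a matter of missing detail rather than a wrong turn; the two points above are the substantive gaps to repair.
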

\begin{rem}
The condition $(iv)$ can be replaced by the condition $(iv)'$
\begin{itemize}
\item[(iv)'] There is a relatively dense subsequence $\Gamma=\left(\gamma_{n}\right)_{n}\subset\Lambda$
such that the sequence $\left(\left|S'\left(\gamma_{n}\right)\right|^{p}\right)_{n}$
satisfies the discrete Muckenhoupt condition $\left(\mathfrak{A}_{p}\right)$. 
\end{itemize}
\end{rem}
\bigskip

The aim of this paper is to generalize the Lyubarskii-Seip result 
to 
finite unions of Carleson sequences. 
In the case of Hardy spaces, this problem has been solved by Vasyunin
\cite{Va84} and Hartmann \cite{Ha96b} and involves divided differences.

As mentioned previously, in the case $p=2$ the Fourier transform allows
to express our main result Theorem \ref{main result} in terms of bases 
of exponentials in $L^{2}$ thereby generalizing a result by
Avdonin and Ivanov \cite[Theorem 3]{AI01}.

\bigskip

This paper is organized as follows. The next section will be devoted
to divided differences.  Section 3 deals with $N-$Carleson
sequences. We will state our main result after some technical constructions
in the fourth section. For an easier reading, we have postponed the proofs 
of Section 4 to the fifth section. Finally, in the last
section we will discuss the necessity of the $N-$Carleson condition
with an appropriate definition of the trace $PW_{\tau}^{p}|\Lambda$.

\bigskip

A final word on notation. If $\delta$ is a metric on $\Omega$,
we will denote by $D_{\delta}\left(x,\eta\right)$ the ball (relatively
to $\delta$) with center $x\in\Omega$ and radius $\eta>0$, and
$\text{diam}_{\delta}(E)$ the $\delta-$diameter of $E$. We shortly
write $\text{diam}(E)$ and $D\left(x,\eta\right)$ when $\delta$
is the Euclidian distance. 
If $\omega=\left(\omega_{n}\right)_{n\geq1}$
is a sequence of strictly positive numbers and $1\leq p<\infty$,
we denote by $l^{p}\left(\omega\right)$ or $l^{p}\left(\omega_{n}\right)$
the space
\[
l^{p}\left(\omega\right):=\left\{ a=\left(a_{n}\right)_{n\geq1}:\:\sum_{n\geq1}\left|a_{n}\right|^{p}\omega_{n}<\infty\right\} .
\]

\section{Divided Differences}

Divided differences appear in many results about interpolation or
bases of exponentials (see e.g. \cite{Va84}, \cite{Ha96b}, \cite{BNO96}
or \cite{AI01}). Here we will give the definitions and some properties
that we will need later on. We recall that the (non-normalized)
Blaschke factors in a half-plane $\mathbb{C}_{a}^{\pm}$ are given
by
\[
b_{\mu}^{\pm,a}(z)=\frac{z-\mu}{z-\overline{\mu}-2ia}.
\]
(The formula is actually the same for the upper and the lower half-plane).
The associated \emph{pseudohyperbolic distance} will be denoted by
\[
\rho_{\pm,a}(z,\mu):=\left|b_{\mu}^{\pm,a}(z)\right|.
\]
For $\mathbb{C}^{+}$, we will write $b_{\mu}=b_{\mu}^{+,0}$ and
use $\rho$ for $\rho_{+,0}$ and $\rho_{-,0}$.

The definitions and properties below are stated and proved in $\mathbb{C}^{+}$
but are obviously valid for any half-plane $\mathbb{C}_{a}^{\pm}$.
\begin{defn}
Let $\Gamma:=\left\{ \mu_{i}:1\leq i\leq|\Gamma|<\infty\right\} \subset\mathbb{C}^{+}$.
For a finite set $a=\left\{ a_{i}\right\} _{1\leq i\leq|\Gamma|}$,
we define the sequence of \emph{(pseudohyperbolic) divided
differences }of $a$ relatively to $\Gamma$ as follows
\[
\Delta_{\Gamma}^{0}(a_{i}):=a_{i},\qquad\Delta_{\Gamma}^{1}(a_{i},a_{j}):=\frac{a_{j}-a_{i}}{b_{\mu_{i}}(\mu_{j})},
\]
and
\[
\Delta_{\Gamma}^{k}(a_{i_{1}},...,a_{i_{k+1}}):=\frac{\Delta_{\Gamma}^{k-1}(a_{i_{1}},...,a_{i_{k-1}},a_{i_{k+1}})-\Delta_{\Gamma}^{k-1}(a_{i_{1}},...,a_{i_{k}})}{b_{\mu_{i_{k}}}(\mu_{i_{k+1}})}.
\]
We will need to estimate the divided differences when $\Gamma$ lies in
a compact set $K\subset\mathbb{C}^{+}$ and $a=\left\{ f(\mu):\;\mu\in\Gamma\right\} $
for $f$ an analytic function bounded in $K$. Here $K$ is supposed
to be the closure of a non empty open connected set. 
By $f\in H^{\infty}(K)$ we mean that $f$ is holomorphic in the interior
of $K$ and 
\[
\left\Vert f\right\Vert _{\infty,K}:=\sup_{z\in K}\left|f(z)\right|<\infty.
\]
\end{defn}
\begin{lem}
\label{Local estimates DD}Suppose that $\Gamma$ lies in a compact
set $K$ with the properties mentioned above, and assume that there
exists $\eta>0$ such that $\rho(\Gamma,\partial K)\geq\eta$. Then,
for each function $f\in H^{\infty}(K)$, we have
\[
\left|\Delta_{\Gamma}^{j}\left(f(\mu^{(j+1)})\right)\right|\leq\left(\frac{2}{\eta}\right)^{j}\prod_{k=0}^{j}\left(\frac{1}{1-\frac{k}{2M}}\right)\left\Vert f\right\Vert _{\infty,K}
\]
where 
\[
\mu^{(j+1)}=\left(\mu_{1},...,\mu_{j+1}\right)\text{ and }f\left(\mu^{(j+1)}\right)=\left(f(\mu_{1}),...,f(\mu_{j+1})\right).
\]
\end{lem}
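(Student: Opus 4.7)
The plan is to prove this by induction on $j$, with the key analytic input being a Schwarz lemma estimate in pseudohyperbolic disks, iterated in a nested family of shrunk copies of $K$. The base case $j=0$ is immediate, since $\left|\Delta_{\Gamma}^{0}(f(\mu_{1}))\right|=\left|f(\mu_{1})\right|\leq\left\Vert f\right\Vert _{\infty,K}$.

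The main one-step ingredient is the following Schwarz-type inequality. If $g\in H^{\infty}(K')$, $\alpha\in K'$ and the pseudohyperbolic disk $D_{\rho}(\alpha,\eta')\subset K'$, then mapping $D_{\rho}(\alpha,\eta')$ conformally onto $\{|w|<\eta'\}$ (sending $\alpha$ to $0$) and applying Schwarz's lemma to the difference $g-g(\alpha)$ yields
\[
\left|\frac{g(\mu)-g(\alpha)}{b_{\alpha}(\mu)}\right|\leq\frac{2\,\|g\|_{\infty,K'}}{\eta'}\qquad\text{for every }\mu\in D_{\rho}(\alpha,\eta').
\]
The factor $2$ comes from bounding $|g-g(\alpha)|\leq 2\|g\|_{\infty,K'}$.

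For the inductive step, I would use the recursive definition
\[
\Delta_{\Gamma}^{j}(f(\mu^{(j+1)}))=\frac{F(\mu_{j+1})-F(\mu_{j})}{b_{\mu_{j}}(\mu_{j+1})},
\]
where $F(z):=\Delta_{\{\mu_{1},\dots,\mu_{j-1},z\}}^{j-1}(f(\mu_{1}),\dots,f(\mu_{j-1}),f(z))$. One first checks that $F$ extends to a holomorphic function on all of $K$: the apparent poles at $\mu_{1},\dots,\mu_{j-1}$ are removable, exactly as in the classical polynomial interpolation case. To iterate the Schwarz inequality $j$ times without losing control of how close the relevant points come to the boundary, introduce a nested family of compact sets $K=K_{0}\supset K_{1}\supset\cdots\supset K_{j}$, where $K_{k}$ is obtained from $K_{k-1}$ by pseudohyperbolically trimming off a collar of width $\eta/(2M)$. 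The hypothesis $\rho(\Gamma,\partial K)\geq\eta$ guarantees that $\Gamma\subset K_{j}$ (provided $j\leq M$), and at step $k$ of the induction the points involved sit at pseudohyperbolic distance $\eta(1-k/(2M))$ from $\partial K_{k-1}$. Applying the one-step estimate in $K_{k-1}$ contributes a factor $2/(\eta(1-k/(2M)))$, and the product of these factors over $k=0,\dots,j-1$, combined with an initial $\|f\|_{\infty,K}$ bound on $F$ at the deepest level, produces exactly the stated inequality.

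The main obstacle is the bookkeeping for the nested compact sets $K_{k}$: one must verify that each $K_{k}$ is still the closure of a nonempty open connected set (so that $H^{\infty}(K_{k})$ makes sense as stated), that $\rho(\Gamma,\partial K_{k})$ decreases by at most $\eta/(2M)$ at each stage, and that the holomorphic extension of $F$ inherits a sup-norm bound on $K_{k-1}$ from the inductive hypothesis applied on $K_{k-2}$. The inductive estimate itself is then a straightforward application of the Schwarz lemma in pseudohyperbolic coordinates.
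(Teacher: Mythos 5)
Your argument is correct and follows essentially the same route as the paper: an induction on $j$ through a nested family of pseudohyperbolically shrunk copies of $K$ (the paper's sets $A_{j}=\{z\in K:\rho(z,\partial K)\geq\tfrac{j}{2N}\eta\}$), with the numerator bounded by $2c_{j}\|f\|_{\infty,K}$ and the Blaschke factor bounded below by $\eta(1-\tfrac{k}{2M})$ via the pseudohyperbolic triangle inequality. The only cosmetic difference is that you package the one-step estimate as a Schwarz lemma on a pseudohyperbolic disk centered at $\mu_{j}$, whereas the paper applies the maximum principle directly to $z\mapsto\Delta_{\Gamma}^{j+1}(f(\mu^{(j+1)},z))$ on $A_{j+1}$ and estimates $\rho(\xi,\mu_{j+1})$ on $\partial A_{j+1}$; these are equivalent.
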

\begin{proof}
Set
\[
A_{j}:=\left\{ z\in K:\quad\rho(z,\partial K)\geq\frac{j}{2N}\eta\right\} ,\qquad0\leq j\leq N-1.
\]
We show by induction over $j$ that for every $z\in A_{j}$,
\[
\left|\Delta_{\Gamma}^{j}\left(f(\mu^{(j)},z)\right)\right|\leq c_{j}\left\Vert f\right\Vert _{\infty,K}
\]
with 
\[
c_{j}=\left(\frac{2}{\eta}\right)^{j}\prod_{k=0}^{j}\left(\frac{1}{1-\frac{k}{2M}}\right).
\]
Since $\Gamma\subset A_{N-1}\subset...\subset A_{1}\subset A_{0}$,
the result will follow. The claim is obviously true for $j=0$. Now, the function 
\[
z\mapsto\Delta_{\Gamma}^{j+1}\left(f\left(\mu^{(j+1)},z\right)\right)
\]
is holomorphic on $A_{j+1}$ and by the maximum principle and the
definition of divided differences, we have for $z\in A_{j+1},$
\begin{equation}
\left|\Delta_{\Gamma}^{j+1}\left(f\left(\mu^{(j+1)},z\right)\right)\right|\leq\underset{\xi\in\partial A_{j+1}}{\sup}\left|\frac{\Delta_{\Gamma}^{j}\left(f\left(\mu^{(j)},\xi\right)\right)-\Delta_{\Gamma}^{j}\left(f(\mu^{(j+1)})\right)}{\rho(\xi,\mu_{j+1})}\right|.\label{maj induction}
\end{equation}
Let $\xi\in\partial A_{j+1}$. It is possible to find a point $\zeta\in\partial K$
such that 
\[
\rho(\zeta,\xi)=\left(\frac{j+1}{2N}\right)\eta
\]
and so, since $\mu_{j+1}\in\Gamma$ and $\rho(\Gamma,\partial K)\geq\eta$,
we have, by the triangle inequality, 
\begin{equation}
\rho(\xi,\mu_{j+1})\geq\rho(\zeta,\mu_{j+1})-\rho(\xi,\zeta)\geq\eta\left(1-\frac{j+1}{2N}\right).\label{eq:min dist induction}
\end{equation}
From (\ref{maj induction}), (\ref{eq:min dist induction}) and the
induction hypothesis, we finally obtain
\[
\left|\Delta_{\Gamma}^{j+1}\left(f(\mu^{(j+1)},\xi)\right)\right|\leq\frac{2}{\eta}\left(\frac{1}{1-\frac{j+1}{2N}}\right)c_{j}\left\Vert f\right\Vert _{\infty,K}
\]
which gives the required estimate.
\end{proof}
The next lemma will be important in the sequel; we can define
a rational Newton type interpolating function which interpolates the
values $\left\{ a(\mu):\;\mu\in\Gamma\right\} $ on $\Gamma$.
\begin{lem}
\label{Interpolation Polynom}The holomorphic function
\[
P_{\Gamma,a}(z):=\sum_{k=1}^{|\Gamma|}\Delta_{\Gamma}^{k-1}\left(a(\mu^{(k)})\right)\prod_{l=1}^{k-1}b_{\mu_{l}}(z)
\]
satisfies
\[
P_{\Gamma,a}(\mu)=a(\mu),\qquad\mu\in\Gamma.
\]

\end{lem}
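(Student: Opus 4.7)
The natural approach is to proceed by induction on the cardinality $n := |\Gamma|$. The base case $n = 1$ is immediate: $P_{\Gamma,a}(z) \equiv \Delta_\Gamma^0(a(\mu_1)) = a(\mu_1)$, so the single interpolation condition at $\mu_1$ is trivially satisfied.

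For the inductive step, set $\Gamma' := \{\mu_1, \ldots, \mu_{n-1}\}$ and decompose
\[
P_{\Gamma,a}(z) = P_{\Gamma',a}(z) + \Delta_\Gamma^{n-1}\bigl(a(\mu^{(n)})\bigr) \prod_{l=1}^{n-1} b_{\mu_l}(z).
\]
For any $j \leq n-1$, the Blaschke product in the added term contains the factor $b_{\mu_j}(z)$, which vanishes at $z = \mu_j$; hence that term vanishes at $\mu_j$, and the induction hypothesis applied to $P_{\Gamma',a}$ gives $P_{\Gamma,a}(\mu_j) = a(\mu_j)$. Interpolation at the first $n-1$ nodes is thus automatic once the inductive hypothesis is in force.

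The real content of the lemma is the remaining condition $P_{\Gamma,a}(\mu_n) = a(\mu_n)$, which reduces to the identity
\[
\Delta_\Gamma^{n-1}\bigl(a(\mu^{(n)})\bigr) = \frac{a(\mu_n) - P_{\Gamma',a}(\mu_n)}{\prod_{l=1}^{n-1} b_{\mu_l}(\mu_n)}.
\]
This is the analogue, in our pseudohyperbolic setting, of the classical characterization of the top-order divided difference as the leading Newton coefficient. Verifying it is the main obstacle: the recursive definition of $\Delta_\Gamma^{n-1}$ given in the excerpt is asymmetric in its arguments, whereas the interpolation condition treats all points on equal footing, so the identity must be established independently. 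I would do so by an auxiliary induction on $n$: expand the right-hand side using the induction hypothesis applied to the two sub-sequences $\{\mu_1,\ldots,\mu_{n-1}\}$ and $\{\mu_1,\ldots,\mu_{n-2},\mu_n\}$, and match the resulting expression term by term with the recursion that defines $\Delta_\Gamma^{n-1}$. An equivalent (and slightly cleaner) route is to first prove that the pseudohyperbolic divided differences are symmetric in their arguments, which immediately yields the displayed identity by moving $\mu_n$ into the last slot. Once this identity is in hand, the inductive step closes and the lemma follows.
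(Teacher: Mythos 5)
The paper offers no proof of this lemma---it is dismissed as ``quite straightforward'' with a pointer to Hartmann's thesis---so the only question is whether your argument is complete. It is not. You correctly reduce the whole statement to the identity
\[
\Delta_{\Gamma}^{n-1}\left(a(\mu^{(n)})\right)=\frac{a(\mu_{n})-P_{\Gamma',a}(\mu_{n})}{\prod_{l=1}^{n-1}b_{\mu_{l}}(\mu_{n})},
\]
and you correctly observe that this identity is the entire content of the lemma; but you then only describe two strategies for proving it instead of proving it. The first (an auxiliary induction matched against the defining recursion) can be made to work but is left entirely to the reader. The second is a dead end: the pseudohyperbolic divided differences used here are \emph{not} symmetric in their arguments. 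Already at order one, $\Delta_{\Gamma}^{1}(a_{i},a_{j})=(a_{j}-a_{i})/b_{\mu_{i}}(\mu_{j})$ and $\Delta_{\Gamma}^{1}(a_{j},a_{i})=(a_{i}-a_{j})/b_{\mu_{j}}(\mu_{i})$ differ by the unimodular factor $-\overline{(\mu_{j}-\overline{\mu_{i}})}\big/(\mu_{j}-\overline{\mu_{i}})$, which is $1$ only when $\mu_{i}$ and $\mu_{j}$ share the same real part. Unlike classical Newton divided differences, these quantities genuinely depend on the ordering of the nodes, so ``move $\mu_{n}$ into the last slot'' is not available.

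The gap closes with a short telescoping computation that also renders the outer induction superfluous. Fix $1\leq j\leq|\Gamma|$ and set $D_{k}:=\Delta_{\Gamma}^{k-1}\left(a(\mu^{(k)})\right)$, $B_{k}(z):=\prod_{l=1}^{k-1}b_{\mu_{l}}(z)$ and, for $1\leq k\leq j$, $F_{k}:=\Delta_{\Gamma}^{k-1}\left(a(\mu_{1}),\dots,a(\mu_{k-1}),a(\mu_{j})\right)$, so that $F_{1}=a(\mu_{j})$ and $F_{j}=D_{j}$. Every term of $P_{\Gamma,a}(\mu_{j})$ with $k>j$ vanishes because $B_{k}(\mu_{j})$ contains the factor $b_{\mu_{j}}(\mu_{j})=0$. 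For $1\leq k\leq j-1$, the defining recursion applied to the indices $(1,\dots,k,j)$ reads $F_{k+1}=\left(F_{k}-D_{k}\right)/b_{\mu_{k}}(\mu_{j})$, that is, $D_{k}B_{k}(\mu_{j})=F_{k}B_{k}(\mu_{j})-F_{k+1}B_{k+1}(\mu_{j})$. Summing over $1\leq k\leq j-1$ and adding the remaining term $D_{j}B_{j}(\mu_{j})$ yields $P_{\Gamma,a}(\mu_{j})=F_{1}B_{1}(\mu_{j})=a(\mu_{j})$. Your identity for the top coefficient is the case $j=n$ of this computation, rearranged.
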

The proof is quite straightforward (see also \cite[p.80]{Ha96a}).
\begin{rem}
Divided differences with respect to pseudohyperbolic metric can
be found in \cite{BNO96,Ha96b,Va84}. 
We will also need
\emph{euclidian divided differences}:
\[
\square_{\Gamma}^{0}:=a_{i},\qquad\square_{\Gamma}^{1}\left(a_{i},a_{j}\right):=\frac{a_{j}-a_{i}}{\mu_{j}-\mu_{i}},
\]
and
\[
\square_{\Gamma}^{k}\left(a_{i_{1}},..,a_{i_{k+1}}\right):=\frac{\square_{\Gamma}^{k-1}\left(a_{i_{1}},..,a_{i_{k-1}},a_{i_{k+1}}\right)-\square_{\Gamma}^{k-1}\left(a_{i_{1}},..,a_{i_{k}}\right)}{\mu_{k+1}-\mu_{k}}.
\]

\end{rem}

\section{$N-$Carleson sequences}
\begin{defn}
Let $N\geq1$ be a natural number. A sequence $\Lambda\subset\mathbb{C}_{a}^{\pm}$
is called a $N-$\emph{Carleson sequence} if it is possible to find
a partition
\[
\Lambda=\bigcup_{i=1}^{N}\Lambda^{i}
\]
 such that, for every $i=1,...,N$, the sequence $\Lambda^{i}$ satisfies
the Carleson (\ref{eq:Carleson}) condition in $\mathbb{C}_{a}^{\pm}$.
\end{defn}

Note that the number $N$ is not uniquely defined.\\

Let us make a link between the $N-$Carleson condition and the \emph{Generalized
Carleson condition}, also called Carleson-Vasyunin condition (see
e.g. \cite{Ni86} and references therein). The following result has
originally been stated in $\mathbb{D}$ (see \cite[Proposition 3.1]{Ha96b})
but can easily be translated to any half-plane $\mathbb{C}_{a}^{\pm}$.
\begin{prop}
\label{Ncarleson-CG} Let $\Lambda$ be a sequence of complex numbers,
lying in $\mathbb{C}_{a}^{\pm}$. The following assertions are equivalent

$(i)$ $\Lambda$ is $N-$Carleson in $\mathbb{C}_{a}^{\pm}$;

$(ii)$ There exists $\delta>0$ and a sequence of Blaschke products
$\left(B_{n}\right)_{n\geq1}$ such that $\sup_{n}\deg B_{n}\leq N$,
$\Lambda=\bigcup_{n}\sigma_{n}$, with $\sigma_{n}:=\left\{ \lambda\in\mathbb{C}_{a}^{\pm}:\: B_{n}\left(\lambda\right)=0\right\} $
and $\left(B_{n}\right)_{n\geq1}$ satisfies the \emph{Generalized Carleson condition}
\begin{equation}
\left|B(z)\right|>\delta\inf_{n\geq1}\left|B_{n}(z)\right|,\qquad z\in\mathbb{\mathbb{C}}_{a}^{\pm},\label{(CG)}
\end{equation}
where $B$ denotes the Blaschke product associated to $\Lambda$.
\end{prop}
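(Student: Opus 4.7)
The plan is to prove the two implications separately, following the strategy of Hartmann \cite{Ha96b} in the disk and transporting it to the half-plane via the standard conformal map.

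For the direction $(ii)\Rightarrow(i)$, I would enumerate the zeros of each $B_n$ as $\sigma_n=\{\lambda_n^{(1)},\ldots,\lambda_n^{(k_n)}\}$ with $k_n\leq N$, and set $\Lambda^i:=\{\lambda_n^{(i)}:k_n\geq i\}$. This yields a partition of $\Lambda$ into $N$ subsequences. To check that each $\Lambda^i$ satisfies (\ref{eq:Carleson}), I would fix $\lambda=\lambda_m^{(i)}\in\Lambda^i$ and note that
\[
\prod_{\mu\in\Lambda^i,\,\mu\ne\lambda}\rho_{\pm,a}(\lambda,\mu)\;=\;\prod_{n\ne m}\bigl|b_{\lambda_n^{(i)}}^{\pm,a}(\lambda)\bigr|\;\geq\;\Bigl|\frac{B}{B_m}(\lambda)\Bigr|,
\]
since the remaining $k_n-1$ Blaschke factors of each $B_n$ have modulus at most $1$. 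At $\lambda\in\sigma_m$, the generalized Carleson condition (\ref{(CG)}) takes the (equivalent) form $|(B/B_m)(\lambda)|\geq\delta$, which supplies the required uniform lower bound.

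For $(i)\Rightarrow(ii)$, starting from $\Lambda=\bigcup_{i=1}^N\Lambda^i$ with each $\Lambda^i$ Carleson with constant $\delta_i$, I would build the clusters $\sigma_n$ by a greedy procedure on a small pseudohyperbolic scale $r>0$: use the points of $\Lambda^1$ as seeds and absorb into each cluster any point of $\Lambda^j$ ($j\geq2$) lying within pseudohyperbolic distance $r$ of the seed; then repeat with $\Lambda^2$ on the leftover points, and so on. Each $\Lambda^i$ being Carleson is in particular pseudohyperbolically separated, so for $r$ small enough the clusters are pairwise disjoint and contain at most one point from each $\Lambda^i$; hence $\#\sigma_n\leq N$. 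Let $B_n$ be the finite Blaschke product with zero set $\sigma_n$.

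The main obstacle is then to verify (\ref{(CG)}). Given $z\in\mathbb{C}_a^{\pm}$, pick $n_0$ realizing $\inf_n|B_n(z)|$; one must bound below
\[
\Bigl|\frac{B}{B_{n_0}}(z)\Bigr|\;=\;\prod_{\mu\in\Lambda\setminus\sigma_{n_0}}\rho_{\pm,a}(z,\mu)
\]
uniformly in $z$. Splitting this product according to which $\Lambda^i$ contains $\mu$, the key estimate is a three-point comparison
\[
\rho_{\pm,a}(z,\mu)\;\geq\;C\,\rho_{\pm,a}\bigl(\lambda^{(i)},\mu\bigr),\qquad\mu\in\Lambda^i\setminus\sigma_{n_0},
\]
where $\lambda^{(i)}$ is the seed of $\sigma_{n_0}$ belonging to $\Lambda^i$ (or a suitable proxy, if no such seed exists), and $C$ depends only on $r$ and on the Carleson constants $\delta_i$. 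Once this inequality is in hand, each piece of the product reduces to a Carleson product for some $\Lambda^i$, and (\ref{(CG)}) follows. The delicate point, and where I expect the bulk of the technical work, is calibrating $r$ so that the individual constants $\delta_i$ combine into a single $\delta>0$; this is essentially a quantitative tracking through the pseudohyperbolic triangle inequality and the separation furnished by each $\Lambda^i$.
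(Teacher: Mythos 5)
First, a point of reference: the paper does not actually prove this proposition; it quotes Hartmann \cite[Proposition 3.1]{Ha96b} (stated in the disk) and asserts that the statement transfers to any half-plane. So your attempt can only be compared with the standard argument. Your direction $(ii)\Rightarrow(i)$ is correct and is essentially that argument: distributing the $i$-th zero of each $B_{n}$ into $\Lambda^{i}$ and discarding the remaining Blaschke factors (all of modulus at most $1$) gives $\prod_{\mu\in\Lambda^{i},\,\mu\ne\lambda}\rho_{\pm,a}(\lambda,\mu)\ge\left|(B/B_{m})(\lambda)\right|\ge\delta$, the last inequality being the usual reading of (\ref{(CG)}) at the zeros of $B_{m}$.

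The converse, however, has genuine gaps. \emph{(a)} Your greedy seeding does not yield pseudohyperbolically separated clusters, and separation is exactly what the verification of (\ref{(CG)}) needs. Concretely, a point $\mu\in\Lambda^{3}$ with $\rho(\lambda',\mu)=r-\varepsilon$ is absorbed by a seed $\lambda'\in\Lambda^{1}$, while a point $\lambda_{s}\in\Lambda^{2}$ with $\rho(\lambda_{s},\lambda')=r+\varepsilon$ survives to seed a \emph{different} cluster even though $\rho(\lambda_{s},\mu)$ may be arbitrarily small; for $z$ near both $\mu$ and $\lambda_{s}$, every quotient $B/B_{n}$ then retains a small factor and (\ref{(CG)}) fails for your partition. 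The correct construction is by chaining at scale $r$, i.e.\ taking the connected components of $\bigcup_{\lambda}D_{\rho}(\lambda,r)$ (this is what Remark \ref{rem:diameters} and the proposition after it encode): if each $\Lambda^{i}$ is $\eta_{i}$-separated and $r<\min_{i}\eta_{i}/(2N)$, a chain meets each $\Lambda^{i}$ at most once, hence has at most $N$ points and small diameter, and distinct clusters are automatically $2r$-separated. \emph{(b)} Normalizing by the minimizer $n_{0}$ of $|B_{n}(z)|$ is the wrong move: a cluster of $N$ moderate factors can have a smaller product than a singleton cluster sitting very close to $z$, so $z$ need not be near $\sigma_{n_{0}}$ and your three-point comparison with the seed of $\sigma_{n_{0}}$ can fail. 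It suffices instead to exhibit \emph{some} $m$ with $|(B/B_{m})(z)|\ge\delta$ (this already gives $|B(z)|\ge\delta|B_{m}(z)|\ge\delta\inf_{n}|B_{n}(z)|$), and the right choice is the cluster of the point of $\Lambda$ nearest to $z$, any $m$ doing when $\rho(z,\Lambda)$ is bounded below. \emph{(c)} A comparison of the form $\rho(z,\mu)\ge C\,\rho(\lambda,\mu)$ with a fixed $C<1$ cannot be multiplied over the infinitely many $\mu\in\Lambda^{i}\setminus\sigma_{m}$, since $\prod C=0$. The standard substitute is the power estimate: if $\rho(z,\lambda)\le s$ and $\rho(\lambda,\mu)\ge t_{0}>s$, then $\rho(z,\mu)\ge\rho(\lambda,\mu)^{K}$ with $K=K(s,t_{0})$, which turns each partial product into $\bigl(\prod_{\mu}\rho(\lambda,\mu)\bigr)^{K}\ge\delta_{i}^{K}$ and yields the uniform $\delta$ you were after.
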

Observe that if $\Lambda$ satisfies $(ii)$,
then, for $\left(\lambda,\mu\right)\in\sigma_{n}\times\sigma_{m}$
($n\neq m$), we have $\rho\left(\sigma_{n},\sigma_{m}\right)\geq\delta$
and thus
\[
\inf_{n\neq m}\rho\left(\sigma_{n},\sigma_{m}\right)\geq\delta>0.
\]

\begin{rem}
\label{rem:diameters}The subsets $\sigma_{n}$ can for instance be
obtained as intersections $\tau_{n}^{\epsilon}\cap\Lambda$ 
where $\tau_{n}^{\epsilon}$ are the connected components
of $L(B,\epsilon):=\left\{ z:\;|B(z)|<\epsilon\right\} $ and $\epsilon$
is small enough. Moreover, choosing
$\epsilon$ in a suitable way, it is possible to assume that the pseudohyperbolic
diameter of $\sigma_{n}$ is arbitrarily small.\end{rem}
\begin{prop}
Let $\Lambda=\left\{ \lambda_{n}:\: n\geq1\right\} $ be an $N-$Carleson
sequence in $\mathbb{C}_{a}^{\pm}$. There exists $\eta>0$ such that
every connected component of $\bigcup_{n\geq1}D_{\rho}\left(\lambda_{n},\eta\right)$
admits at most $N$ elements.\end{prop}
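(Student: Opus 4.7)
The plan is to read off the required bound on the number of points in a connected component directly from the generalized Carleson structure provided by Proposition \ref{Ncarleson-CG}. More precisely, since $\Lambda$ is $N$-Carleson in $\mathbb{C}_a^\pm$, this proposition gives a Blaschke decomposition $\Lambda = \bigcup_{n \geq 1} \sigma_n$ with $\sigma_n = \{B_n = 0\}$, $\sup_n \deg B_n \leq N$, and a constant $\delta > 0$ governing the generalized Carleson condition. As noted in the paragraph following the proposition, this forces
\[
\inf_{n \neq m} \rho(\sigma_n, \sigma_m) \geq \delta > 0,
\]
and each $\sigma_n$ has cardinality at most $N$.

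Given this separation, I would simply choose
\[
\eta := \tfrac{\delta}{3}
\]
(any $\eta < \delta/2$ works). The key step is then to show that a disc $D_\rho(\lambda, \eta)$ with $\lambda \in \sigma_n$ cannot meet a disc $D_\rho(\mu, \eta)$ with $\mu \in \sigma_m$, $m \neq n$. This is immediate from the triangle inequality for the pseudohyperbolic metric: if $z$ lay in the intersection, then
\[
\delta \leq \rho(\lambda, \mu) \leq \rho(\lambda, z) + \rho(z, \mu) < 2\eta < \delta,
\]
a contradiction. Hence $D_\rho(\lambda, \eta)$ and $D_\rho(\mu, \eta)$ are disjoint whenever $\lambda$ and $\mu$ come from distinct pieces $\sigma_n$, $\sigma_m$.

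It follows that every connected component $C$ of $\bigcup_{n \geq 1} D_\rho(\lambda_n, \eta)$ can only contain discs centered at points lying in a single $\sigma_k$: if $C$ contained a disc around a point of $\sigma_n$ and one around a point of $\sigma_m$ with $n \neq m$, then the chain of overlapping discs connecting them would produce, at some step, an intersection forbidden by the previous paragraph. Therefore the number of points $\lambda_j \in \Lambda$ whose disc lies in $C$ is at most $|\sigma_k| \leq N$, which is exactly the claim.

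I expect no real obstacle here; the statement is essentially a geometric reformulation of the separation inherent in the $N$-Carleson decomposition. The only point requiring mild care is that $\rho$ be treated as a genuine metric (so that the triangle inequality is available), and that we remember $\eta$ must be chosen strictly less than $\delta/2$ so that inter-cluster discs stay disjoint.
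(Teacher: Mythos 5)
Your argument is correct, and it is essentially the argument the paper intends: the paper states this proposition without proof, immediately after observing that the decomposition $\Lambda=\bigcup_n\sigma_n$ from Proposition \ref{Ncarleson-CG} satisfies $\left|\sigma_n\right|\leq N$ and $\inf_{n\neq m}\rho\left(\sigma_n,\sigma_m\right)\geq\delta>0$, which is exactly the input you use. Your choice $\eta<\delta/2$, the triangle inequality for $\rho$ (which the paper itself uses freely, e.g.\ in the proof of Lemma \ref{Local estimates DD}), and the chain-of-overlapping-discs description of connected components of a union of open discs together give the claim with no gaps.
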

\begin{rem}
\label{rem:inclusionrectangle}We can deduce from the previous proposition
that if $\Lambda$ is $N-$Carleson in $\mathbb{C}_{a}^{\pm}$ (or
equivalently satisfies condition $(ii)$ of Proposition \ref{Ncarleson-CG}),
it is possible to construct a sequence of rectangles of $\mathbb{C}_{a}^{\pm}$
defined by
\[
R_{n}=\text{Rect}\left(z_{n},L_{n},l_{n}\right)=\left\{ x+iy\in\mathbb{C}_{a}^{\pm}:\:\left|x-x_{n}\right|\leq\frac{L_{n}}{2},\:\left|y-y_{n}\right|\leq\frac{l_{n}}{2}\right\} 
\]
with $L_{n},l_{n}>0$ and $z_{n}=x_{n}+iy_{n}$. These rectangles
satisfy the following properties:

\begin{equation}
\sigma_{n}\subset R_{n},\qquad n\geq1;\label{rect1}
\end{equation}
\begin{equation}
L_{n}\asymp l_{n}\asymp\left|y_{n}-a\right|\asymp d\left(\partial R_{n},\mathbb{R}+ia\right),\qquad n\geq1;\label{rect2}
\end{equation}
\begin{equation}
0<\inf_{n\geq1}\rho\left(\sigma_{n},\partial R_{n}\right)\leq\sup_{\underset{\lambda\in\sigma_{n}}{n\geq1}}\rho\left(\lambda,\partial R_{n}\right)<\infty;\label{rect3}
\end{equation}

and finally, since the diameter of $\sigma_{n}$ can be chosen arbitrarily
small by Remark \ref{rem:diameters}, we can suppose the $R_{n}$
disjoints and even 
\begin{equation}
\inf_{n\not=k}\rho\left(R_{n},R_{k}\right)>0.\label{rect4}
\end{equation}

\end{rem}
Let $\Lambda$ be $N-$Carleson in $\mathbb{C}_{a}^{\pm}$ and $1<p<\infty$.
From Proposition \ref{Ncarleson-CG}, we can write 
\[
\Lambda=\bigcup_{n\geq1}\sigma_{n},
\]
with in particular $\left|\sigma_{n}\right|\leq N$. We will construct
divided differences relatively to $\sigma_{n}$. We set
\[
\sigma_{n}=\left\{ \lambda_{n,k}:\:1\leq k\leq\left|\sigma_{n}\right|\right\} \text{ and }\lambda_{n}^{\left(k\right)}=\left(\lambda_{n,1},...,\lambda_{n,k}\right).
\]
We choose, in an arbitrarily way, $\lambda_{n,0}$ in $\sigma_{n}$
and introduce, for $a=\left(a\left(\lambda\right)\right)_{\lambda\in\Lambda}\in\mathbb{C}^{\Lambda}$,
\[
\left\Vert a\right\Vert _{X_{\pm a}^{p}\left(\Lambda\right)}:=\left(\sum_{n\geq1}\left|\text{Im}(\lambda_{n,0})-a\right|\sum_{k=1}^{|\sigma_{n}|}\left|\Delta_{\sigma_{n}}^{k-1}\left(a\left(\lambda_{n}^{(k)}\right)\right)\right|^{p}\right)^{\frac{1}{p}}
\]
and the space
\[
X_{\pm a}^{p}\left(\Lambda\right):=\left\{ a\in\mathbb{C}^{\Lambda}:\;\left\Vert a\right\Vert _{X_{\pm a}^{p}\left(\Lambda\right)}<\infty\right\} .
\]

Observe that for every $\lambda\in\sigma_{n}$, $1\asymp\left|\text{Im}\left(\lambda\right)-a\right|/\left|\text{Im}\left(\lambda_{n,0}\right)-a\right|$
and so the definition of $X_{\pm a}^{p}\left(\Lambda\right)$ does
not depend on the choice of $\lambda_{n,0}$. The following result
was originally stated in $\mathbb{D}$ (see \cite{Ha96b}) but it
is not hard to check that it holds in $\mathbb{C}_{a}^{\pm}$. The
reader will find  details in \cite[p. 92]{Gau11}.
\begin{thm}
\label{thm Hartmann}(Hartmann). Let $\Lambda$ be $N-$Carleson in
$\mathbb{C}_{a}^{\pm}$ and $1<p<\infty$. Then, $R_{\Lambda}$ is
continuous and surjective from $H^{p}\left(\mathbb{C}_{a}^{\pm}\right)$
onto $X_{\pm a}^{p}\left(\Lambda\right)$.
\end{thm}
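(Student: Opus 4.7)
My approach splits into proving continuity and surjectivity separately, both built around the cluster decomposition $\Lambda = \bigcup_n \sigma_n$ with $|\sigma_n| \leq N$ from Proposition \ref{Ncarleson-CG} and the enveloping rectangles $R_n$ from Remark \ref{rem:inclusionrectangle}. I work in $\mathbb{C}_a^+$; the lower half-plane case is symmetric. Let $B_n$ denote the finite Blaschke product with zero set $\sigma_n$, $B$ the Blaschke product of $\Lambda$, and $\hat B_n := B/B_n$; by (\ref{(CG)}), $\inf_n \inf_{\sigma_n} |\hat B_n| \geq \delta > 0$.

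\textbf{Continuity.} I would first enlarge each $R_n$ to a compact $K_n$ with $\rho(\sigma_n, \partial K_n) \geq \eta$ uniformly in $n$, keeping the $K_n$ pairwise pseudohyperbolically separated thanks to (\ref{rect4}). Lemma \ref{Local estimates DD} then yields
\[
|\Delta_{\sigma_n}^{k-1}(f(\lambda_n^{(k)}))| \leq C_{N,\eta}\, \|f\|_{\infty, K_n}
\]
for every $f \in H^p(\mathbb{C}_a^+)$. Subharmonicity of $|f|^p$ combined with (\ref{rect2}) gives $\|f\|_{\infty, K_n}^p \leq C |y_n-a|^{-2} \int_{K_n^*} |f|^p\, dA$ on a slight further enlargement $K_n^*$. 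Summing with the weight $|y_n-a|$ then reduces the continuity estimate to checking that the measure $\mu := \sum_n |y_n-a|^{-1} \chi_{K_n^*}\, dA$ is Carleson on $\mathbb{C}_a^+$. This follows from the $N$-Carleson property: a Carleson square $Q$ of sidelength $h$ receives contributions only from clusters at height $\lesssim h$, whose total mass is bounded by $\sum_{\lambda \in \Lambda \cap Q'} |\text{Im}(\lambda) - a| \leq C N h$ via (\ref{eq:Carleson}) applied to each of the $N$ subsequences, and the Carleson embedding $H^p \hookrightarrow L^p(\mu)$ closes this half.

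\textbf{Surjectivity.} Given $a \in X^p_{\pm a}(\Lambda)$, I would construct $f \in H^p$ with $R_\Lambda f = a$ from cluster-wise atoms. Using the Newton-type interpolating function $P_n = P_{\sigma_n, a}$ of Lemma \ref{Interpolation Polynom}, my ansatz is
\[
f(z) := \sum_n P_n(z)\, \hat B_n(z)\, \Phi_n(z),
\]
where $\Phi_n(z) := c_n (2i(y_n-a))^s / (z - \bar z_n - 2ia)^{s+1}$ is a Shapiro--Shields-type atom centred at $z_n \in R_n$, with exponent $s$ chosen large enough (depending on $N, p$) to absorb the polynomial growth of $P_n$ so that $\Phi_n \hat B_n \in H^p$ with $\|\Phi_n \hat B_n\|_{H^p}^p \asymp |y_n-a|$, and $c_n$ normalised so that $(\Phi_n \hat B_n)(z_n) = 1$. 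Since $\hat B_m$ vanishes on $\sigma_n$ for $m \neq n$, the values of $f$ on $\sigma_n$ are given solely by the $n$-th summand. Because the pseudohyperbolic diameter of $\sigma_n$ can be taken arbitrarily small (Remark \ref{rem:diameters}), $\Phi_n \hat B_n$ differs from $1$ by an arbitrarily small amount on $\sigma_n$, whence the operator $T : X^p_{\pm a}(\Lambda) \to X^p_{\pm a}(\Lambda)$ given by $T a := R_\Lambda f$ differs from the identity by an operator of small norm; a Neumann series argument then inverts $T$ and yields an exact interpolant. The $H^p$-bound relies on $\|P_n\|_{\infty, R_n} \leq C \sum_k |\Delta^{k-1}_{\sigma_n}(a(\lambda_n^{(k)}))|$ (immediate from Lemma \ref{Interpolation Polynom} and $|b_{\lambda_{n,l}}| \leq 1$ on $\mathbb{C}_a^+$), together with a Carleson-type summation across clusters enabled by the $N$-Carleson structure.

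\textbf{Main obstacle.} The delicate step is surjectivity, for two reasons. First, $T - I$ must be contractive in the $X^p_{\pm a}$-norm, which involves divided differences, not merely pointwise values; the reduction uses precisely Lemma \ref{Local estimates DD} applied to $\Phi_n \hat B_n - 1$ on $K_n$, ensuring that $|\Delta^{k-1}_{\sigma_n}(\Phi_n \hat B_n - 1)(\lambda_n^{(k)})|$ is small uniformly as $\text{diam}_\rho(R_n) \to 0$. Second, after shrinking the rectangles to make the Neumann correction contractive, one must verify that the $H^p$-norm of $f$ remains bounded by $\|a\|_{X^p_{\pm a}}$ with constants uniform in that choice, which requires quantitative control on the cross-cluster interactions and a careful combined use of (\ref{rect4}) and (\ref{(CG)}). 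The continuity half, by contrast, is essentially a local estimate via Lemma \ref{Local estimates DD} promoted globally by Carleson embedding.
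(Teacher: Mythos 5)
The paper itself does not prove this theorem; it quotes Hartmann \cite{Ha96b} and defers the transfer from $\mathbb{D}$ to the half-plane to \cite{Gau11}, so your argument has to stand on its own. Your continuity half does: Lemma \ref{Local estimates DD} on uniformly enlarged neighbourhoods of the clusters, the sub-mean-value property of $|f|^{p}$, and the fact that $\sum_{n}|y_{n}-a|^{-1}\chi_{K_{n}^{*}}\,dA$ is a Carleson measure (because $\sum_{\lambda\in\Lambda}|\mathrm{Im}(\lambda)-a|\,\delta_{\lambda}$ is a sum of $N$ Carleson measures) is exactly the standard route for the boundedness of $R_{\Lambda}$ into $X_{\pm a}^{p}(\Lambda)$.

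The surjectivity half has a genuine gap at the Neumann-series step. Write $g_{n}:=\hat{B}_{n}\Phi_{n}$, normalised so that $g_{n}(z_{n})=1$; then $(Ta)(\lambda)=a(\lambda)g_{n}(\lambda)$ on $\sigma_{n}$, and the $X_{\pm a}^{p}$-norm of $Ta-a$ involves $\Delta_{\sigma_{n}}^{k}\bigl(a\,(g_{n}-1)\bigr)$. The Leibniz rule for divided differences produces cross terms $\Delta_{\sigma_{n}}^{j}(a)\cdot\Delta_{\sigma_{n}}^{k-j}(g_{n})$ with $j<k$, and $\Delta_{\sigma_{n}}^{1}(g_{n})(\lambda,\mu)=(g_{n}(\mu)-g_{n}(\lambda))/b_{\lambda}(\mu)$ is a pseudohyperbolic difference quotient: Schwarz--Pick bounds it by an absolute constant, but it is \emph{not} small however small $\mathrm{diam}_{\rho}(\sigma_{n})$ is, since it is comparable to the hyperbolic derivative of $\hat{B}_{n}$, which is controlled by the fixed separation $\delta$ and not by the cluster diameter. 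Your proposed fix via Lemma \ref{Local estimates DD} applied to $g_{n}-1$ does not work: that lemma gives $|\Delta^{j}(g_{n}-1)|\leq C(2/\eta)^{j}\|g_{n}-1\|_{\infty,K_{n}}$, and forcing $\|g_{n}-1\|_{\infty,K_{n}}$ to be small requires shrinking $K_{n}$ onto $\sigma_{n}$, which sends $\eta\to0$ and destroys the bound for $j\geq1$. Hence $\|T-I\|$ remains of order $1$ and the Neumann series need not converge. The construction is likely salvageable by a different observation: the error in the order-$k$ difference is an $O(\rho_{0})$ multiple of $\Delta^{k}(a)$ plus $O(1)$ multiples of differences of $a$ of order strictly less than $k$, so $T-I$ splits into a small operator plus one that strictly raises the order; the latter is nilpotent (there are at most $N$ orders), so $T$ is invertible by a finite triangular expansion rather than a contraction argument --- but this is not in your write-up. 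For comparison, the route actually used by Hartmann, and mirrored in Section \ref{Proof-of-Theorem main} of this paper for the analogous surjectivity statement, is Shapiro--Shields duality combined with the residue theorem over the contours $\partial R_{n}$, with the Newton functions of Lemma \ref{Interpolation Polynom} entering through the residues; this bypasses the perturbation issue entirely. Finally, note that your bound $\|\sum_{n}P_{n}\hat{B}_{n}\Phi_{n}\|_{H^{p}}\lesssim\|a\|_{X_{\pm a}^{p}}$ is itself a duality/Carleson-measure estimate for $p>1$, not a triangle inequality, though it is standard once $s$ is taken large.
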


\section{\label{sec:Main-Result}Main Result}

Let $\Lambda$ be a sequence in the complex plane. In this section
we 
assume that there is an integer $N\geq1$ such that
for every $a\in\mathbb{R}$, the sequence 
\[
\Lambda_{a}^{\pm}:=\Lambda\cap\mathbb{C}_{a}^{\pm}
\]
is $N-$Carleson in the corresponding half-plane. 
Note that the partitions discussed in the previous section were
adapted to sequences in a half-plane.
Here, we will start discussing a {}``right'' partition of $\Lambda$ taking into 
account the fact that $\Lambda$ lies in the whole complex plane

\subsection{\label{An-adapted-partition}An adapted partition}

From our above discussions 
it is possible to write
\[
\Lambda_{a}^{\pm}=\bigcup_{n\geq1}\sigma_{n,a}^{\pm},
\]
where $\left(B_{\sigma_{n,a}^{\pm}}^{\pm,a}\right)_{n}$ satisfies
the generalized Carleson condition in the corresponding half-plane
$\mathbb{C}_{a}^{\pm}$ ($B_{\sigma_{n,a}^{\pm}}^{\pm,a}$ being the
Blaschke product in $\mathbb{\mathbb{C}}_{a}^{\pm}$ vanishing on
$\sigma_{n,a}^{\pm}$). To simplify the notation, we will omit $a$
if $a=0$ and write
\[
\sigma_{n}:=\left\{ \begin{array}{lc}
\sigma_{n+1}^{+}, & n\geq0\\
\sigma_{n}^{-}, & n<0
\end{array}\right..
\]
The reader might notice that $\sigma_{n}^{+}$ and $\sigma_{m}^{-}$
can come very close for certain values of $n$ and $m$. This issue
will be fixed below. Let us distinguish the sets of points close to
the real axis and the ones far away from it. Let us fix $\epsilon>0$
for all what follows. We can assume that
\[
\rho_{0}:=\sup_{n\in\mathbb{Z}}\text{diam}_{\rho}\left(\sigma_{n}\right)<\frac{\epsilon}{2}.
\]
(Observe that $\rho_{0}$ is well defined by the Generalized Carleson
condition). Next introduce
\[
M_{\epsilon,\infty}:=\left\{ n\in\mathbb{Z}:\:\sigma_{n}\cap\left\{ \left|\text{Im}(z)\right|<\epsilon\right\} =\emptyset\right\} ,
\]
\[
\Lambda_{\epsilon,\infty}:=\bigcup_{n\in M_{\epsilon,\infty}}\sigma_{n}
\]
(corresponding to the points for which the corresponding set $\sigma_{n}$
does not interset the previous strip) and
\[
\Lambda_{\epsilon}:=\Lambda\setminus\Lambda_{\epsilon,\infty}.
\]
Notice that $\Lambda_{\epsilon}$ contains the points of $\Lambda$
lying in the real axis
and moreover
\[
\Lambda_{\epsilon}\subset\left\{ z\in\mathbb{C}:\:\left|\text{Im}(z)\right|<3\epsilon\right\} .
\]
Indeed, if $\lambda\in\Lambda_{\epsilon}$ and $\lambda\not\in\mathbb{R}$,
then there is $n_{\lambda}\in\mathbb{Z}\setminus M_{\epsilon,\infty}$
such that $\lambda\in\sigma_{n_{\lambda}}$. Hence, it is possible
to find $\mu\in\sigma_{n_{\lambda}}$ such that $\left|\text{Im}\left(\mu\right)\right|<\epsilon$.
It follows that
\begin{eqnarray*}
\left|\lambda-\mu\right| & = & \frac{\left|\lambda-\mu\right|}{\left|\lambda-\overline{\mu}\right|}\left|\lambda-\overline{\mu}\right|\\
 & \leq & \rho_{0}\left(2\left|\text{Im}\left(\mu\right)\right|\right)+\left|\lambda-\mu\right|\\
 & \leq & \frac{3}{2}\epsilon^{2}<\frac{3}{2}\epsilon,
\end{eqnarray*}
which implies that $\left|\text{Im}\left(\lambda\right)\right|<5\epsilon/2$.
Now, since $\Lambda_{\epsilon}$ is contained in a strip, parallel
to the real axis, of finite width and is $N-$Carleson in $\mathbb{C}_{-3\epsilon}^{+}$
, $\Lambda_{\epsilon}$ breaks up into a disjoint union
\[
\Lambda_{\epsilon}=\bigcup_{n\geq1}\sigma_{n}^{'}
\]
with
\[
\rho_{0}^{'}:=\sup_{n\geq1}\text{diam}\left(\sigma_{n}^{'}\right)<\frac{\epsilon}{2}
\]
and moreover, for some $\delta>0$, the subsets
\[
\Omega_{n}:=\left\{ z\in\mathbb{C}:\prod_{\lambda\in\sigma_{n}^{'}}\left|z-\lambda\right|\leq\delta\right\} ,\qquad n\geq1,
\]
satisfy
\begin{equation}
\inf_{n\neq m}d\left(\Omega_{n},\Omega_{m}\right)>0.\label{eq:distance Omega positive}
\end{equation}
This is possible in view of Remarks \ref{rem:diameters} and \ref{rem:inclusionrectangle}.
It follows that we can write $\Lambda$ as the following disjoint
union
\[
\Lambda=\left(\bigcup_{n\in M_{\epsilon,\infty}}\sigma_{n}\right)\cup\left(\bigcup_{n\geq1}\sigma_{n}'\right)=:\bigcup_{n\in\mathbb{Z}}\tau_{n}.
\]
Now that the partition is done, it is possible to construct divided
differences. Since we will need both definitions of divided differences,
we set
\[
\tilde{\Delta}_{\tau_{n}}:=\left\{ \begin{array}{cl}
\Delta_{\tau_{n}} & \text{if }\exists k\text{ s.t. }\tau_{n}=\sigma_{k}\\
\square_{\tau_{n}} & \text{if }\exists k\text{ s.t. }\tau_{n}=\sigma_{k}^{'}
\end{array}\right..
\]
It is now possible to introduce a space of sequences that will be,
assuming some hypotheses on $\Lambda$, the range of $R_{\Lambda}$.
Naturally, we write
\[
\tau_{n}=\left\{ \lambda_{n,k}:\:1\leq k\leq\left|\sigma_{n}\right|\right\} \text{ and }\lambda_{n}^{(k)}:=\left(\lambda_{n,1},...,\lambda_{n,k}\right).
\]
As previously, we choose, in an arbitrarily way, $\lambda_{n,0}\in\tau_{n},$
for every $n\in\mathbb{Z}$. We define, for $1<p<\infty$, 
\[
X_{\tau,\epsilon}^{p}(\Lambda):=\left\{ a=\left(a(\lambda)\right)_{\lambda\in\Lambda}:\:\left\Vert a\right\Vert _{X_{\tau,\epsilon}^{p}(\Lambda)}<\infty\right\} ,
\]
with
\[
\left\Vert a\right\Vert _{X_{\tau,\epsilon}^{p}(\Lambda)}^{p}:=\sum_{n\in\mathbb{Z}}\left(1+\left|\text{Im}(\lambda_{n,0})\right|\right)\sum_{k=1}^{|\tau_{n}|}\left|\tilde{\Delta}_{\tau_{n}}^{k-1}\left(ae^{\pm i\tau\cdot}\left(\lambda_{n}^{(k)}\right)\right)\right|^{p},
\]
and
\[
e^{\pm i\tau\lambda}=\left\{ \begin{array}{cc}
e^{i\tau\lambda} & \text{ if }\lambda\in\tau_{n},\quad n\in N_{+},\\
e^{-i\tau\lambda} & \text{if }\lambda\in\tau_{n},\quad n\in N_{-},
\end{array}\right.
\]
where
\[
N_{+}:=\left\{ n\in\mathbb{Z}:\:\tau_{n}\cap\left(\mathbb{C}^{+}\cup\mathbb{R}\right)\not=\emptyset\right\} 
\]
and 
\[
N_{-}:=\mathbb{Z}\setminus N_{+}.
\]
(The factor $e^{\pm i\tau\lambda}$ does not really matter close to
$\mathbb{R}$.) Next proposition will be proved in Section \ref{Proof-of-Theorem main}.
\begin{prop}
\label{nec rel dens}If there exists $\epsilon>0$ such that $R_{\Lambda}$
is an isomorphism between $PW_{\tau}^{p}$ and $X_{\tau,\epsilon}^{p}\left(\Lambda\right)$
then $\Lambda$ is relatively dense, i.e. there exists $r>0$ such
that for every $x\in\mathbb{R}$, $d\left(x,\Lambda\right)<r$.
\end{prop}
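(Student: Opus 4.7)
The plan is to argue by contradiction: suppose $\Lambda$ is not relatively dense, and pick $(x_k)_{k \geq 1} \subset \mathbb{R}$ with $r_k := d(x_k, \Lambda) \to \infty$; since $\Lambda$ is nonempty and $|\lambda - x_k| \geq r_k$ for every $\lambda \in \Lambda$, this forces $|x_k| \to \infty$. For an integer $M$ with $Mp > 2$, I use the translated Fej\'er-type test functions
\[
f_k(z) := \left( \frac{\sin(\tau(z - x_k)/M)}{\tau(z - x_k)/M} \right)^M \in PW_\tau^p,
\]
each of exponential type $\tau$ with $\|f_k\|_p$ equal to a positive constant $\kappa_M$ independent of $k$ (by translation invariance). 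The goal is to show $\|R_\Lambda f_k\|_{X_{\tau,\epsilon}^p(\Lambda)} \to 0$; together with the lower bound $\|f_k\|_p \leq \|R_\Lambda^{-1}\| \|R_\Lambda f_k\|_{X_{\tau,\epsilon}^p(\Lambda)}$ granted by the isomorphism assumption, this produces the contradiction.

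Using $|\sin w| \leq e^{|\text{Im}\, w|}$, one checks that on the closed upper (resp.\ lower) half-plane $|f_k(z) e^{i\tau z}| \leq (M/\tau)^M/|z - x_k|^M$ (resp.\ with $e^{-i\tau z}$), and that the same bound holds up to the constant $e^{8\tau\epsilon}$ on the strip $\{|\text{Im}\, z| < 4\epsilon\}$. I then apply Lemma~\ref{Local estimates DD} --- or its immediate Euclidean counterpart, used on the blocks $\tau_n = \sigma_m'$ lying in the strip --- to $g := f_k e^{\pm i\tau\cdot}$ on neighborhoods $K_n \supset \tau_n$ provided by Remark~\ref{rem:inclusionrectangle} and the separation (\ref{eq:distance Omega positive}). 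Since $x_k \in \mathbb{R}$, $|\lambda_{n,0} - x_k| \geq |\text{Im}\, \lambda_{n,0}|$, so shrinking the (pseudohyperbolic/Euclidean) radius of $K_n$ yields $|z - x_k| \geq \tfrac12 |\lambda_{n,0} - x_k|$ uniformly on $K_n$, and therefore
\[
\bigl| \tilde{\Delta}_{\tau_n}^{j-1}\bigl((f_k e^{\pm i\tau\cdot})(\lambda_n^{(j)})\bigr) \bigr| \leq \frac{C(2M/\tau)^M}{|\lambda_{n,0} - x_k|^M}.
\]

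Summing over $n$ and factoring one half of the denominator via $|\lambda_{n,0} - x_k| \geq r_k$ gives
\[
\|R_\Lambda f_k\|^p_{X_{\tau,\epsilon}^p(\Lambda)} \leq \frac{C'}{r_k^{Mp/2}} \sum_{n \in \mathbb{Z}} \frac{1 + |\text{Im}\, \lambda_{n,0}|}{|\lambda_{n,0} - x_k|^{Mp/2}}.
\]
Since $\Lambda$ is $N$-Carleson in each half-plane $\mathbb{C}_a^\pm$, pseudohyperbolic separation yields a density of $\lesssim N/|H - a|$ points of $\Lambda$ per unit length of $\text{Re}$ at imaginary height $H$ (and a bounded density near the real line, via the case $a = \pm \epsilon$); a routine dyadic decomposition in the variables $(|\text{Re}\, \lambda - x_k|, |\text{Im}\, \lambda|)$ then bounds the residual sum uniformly in $x_k \in \mathbb{R}$ whenever $Mp/2 > 1$, which holds by the choice of $M$. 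Hence $\|R_\Lambda f_k\|_{X_{\tau,\epsilon}^p(\Lambda)} \to 0$, the required contradiction.

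I expect the main obstacle to be the divided-difference step: the compact sets $K_n$ must both have uniform inner separation $\rho(\tau_n, \partial K_n) \geq \eta > 0$ (so that Lemma~\ref{Local estimates DD} delivers a constant independent of $n$) and be small enough that $|z - x_k| \asymp |\lambda_{n,0} - x_k|$ uniformly on $K_n$. Establishing the Euclidean analogue of Lemma~\ref{Local estimates DD} for the strip blocks $\sigma_m'$, and formalizing the uniform-in-$x$ dyadic density bound, are the remaining routine-but-delicate ingredients.
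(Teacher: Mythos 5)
Your proof is correct and follows the same overall architecture as the paper's: argue by contradiction, test the isomorphism on translated cardinal-sine functions centred at the points $x_k$ witnessing the failure of relative density, control the divided differences via Lemma~\ref{Local estimates DD}, and show the resulting sum tends to $0$. The two places where you deviate are both in how the final summation is closed, and both variants work. First, the paper keeps the plain $\mathrm{sinc}$ (so the pointwise decay is only $|z-x_j|^{-1}$) and compensates by splitting the exponent as $p=\alpha+(p-\alpha)$ with $p-\alpha>1$, extracting the factor $r_j^{-\alpha}$; you instead take the $M$-th power of the $\mathrm{sinc}$ with $Mp>2$ to manufacture enough decay directly, which is equivalent. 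Second, and more substantively, the paper bounds the residual sum $\sum_n (1+|\mathrm{Im}\,\lambda_{n,0}|)\,|\lambda_{n,0}-x_j|^{-(p-\alpha)}$ by recognising it as the Carleson embedding of the $H^{p-\alpha}(\mathbb{C}^{\pm}_{\mp 1/2})$ function $z\mapsto (z-x_j\pm i)^{-1}$ against the Carleson measure $\sum(1+|\mathrm{Im}\,\lambda|)\delta_\lambda$, which gives a bound uniform in $x_j$ in one line; your dyadic count based on the pseudohyperbolic separation (density $\lesssim N/|\mathrm{Im}\,\lambda|$ per unit real length at height $|\mathrm{Im}\,\lambda|$, bounded density near $\mathbb{R}$) proves the same uniform bound for exponents $>1$ by hand. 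Your route is more elementary and self-contained; the paper's is shorter because it reuses the Carleson-measure characterisation already in play. The technical point you flag at the end --- choosing the neighbourhoods $K_n$ with uniform inner pseudohyperbolic separation from $\tau_n$ while keeping $|z-x_k|\asymp|\lambda_{n,0}-x_k|$ on $K_n$ --- is genuine but unproblematic, since the groups $\sigma_n$ have pseudohyperbolic diameter at most $\rho_0<\epsilon/2$, which can be taken as small as needed (Remark~\ref{rem:diameters}), and the strip blocks have bounded Euclidean diameter while $|\lambda_{n,0}-x_k|\ge r_k\to\infty$.
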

It follows from the conclusion of the previous proposition that the
relative density is necessary. Thus, we will assume in all what follows
that $\Lambda$ is relatively dense:
\[
\exists r>0,\;\forall x\in\mathbb{R},\; d(x,\Lambda)<r.
\]

Still relative to the previous partition of $\Lambda$, we introduce,
for $n\geq1$, the products
\[
p_{n}\left(x\right):=\prod_{\lambda\in\tau_{n}}\left|x-\lambda\right|
\]
which permit us to define the function
\[
d_{N}\left(x\right):=\inf_{n\in\mathbb{Z}}p_{n}\left(x\right),\qquad x\in\mathbb{R}.
\]

\begin{rem}
\label{rem:dN}From the definition of the function $d_{N}$, we can
do the following observations.
\begin{itemize}
\item $(1)$ The relative density condition implies that 
\[
\sup_{x\in\mathbb{R}}\: d_{N}(x)\leq\left(r+\delta_{0}^{'}\right)^{N}<\infty,
\]
where
\[
\delta_{0}^{'}:=\inf_{n\not=m}d(\sigma_{n}^{'},\sigma_{m}^{'})>0.
\]

\item $(2)$ It is clear that, in the definition of $d_{N}$, the infimum
is actually a minimum. So, for each $x\in\mathbb{R}$, there is $n_{x}\in\mathbb{Z}$
such that $d_{N}(x)=p_{n_{x}}(x)$. It is not difficult to see that
\[
\inf_{x\in\mathbb{R}}\inf_{m\not=n_{x}}p_{m}(x)\ge\left(\frac{\delta_{0}^{'}}{2}\right)^{N}>0.
\]

\item $(3)$ Using the relative density, a similar reasoning as the one
that can be used to show $(2)$ yields that, with an other partition
(and in particular with an other choice of $\epsilon$), the function
obtained is equivalent to $d_{N}$.
\end{itemize}
\end{rem}

\subsection{The theorem}
\begin{defn}
Let $\Lambda$ be $N-$Carleson in every half-plane and relatively
dense. We say that $\Lambda$ satisfies the conditions $\left(H_{N}\right)_{\tau,p}$
(for $\tau>0$ and $1<p<\infty$) if 
\begin{itemize}
\item $(i)$ The limit 
\[
S(z):=\lim_{R\to\infty}\prod_{|\lambda|<R}\left(1-\frac{z}{\lambda}\right)
\]
exists and defines an entire function of exponential type $\tau$.
\item $(ii)$ The function $x\mapsto\left(\frac{|S(x)|}{d_{N}(x)}\right)^{p}$
satisfies the (continuous) Muckenhoupt condition $(A_{p})$.
\end{itemize}
\end{defn}
The reader would notice that, in view of Remark \ref{rem:dN}$-(3)$,
the definition of the conditions $\left(H_{N}\right)_{\tau,p}$ do
no depend on the partition of $\Lambda$. 
\begin{thm}
\label{main result}Let $N\geq1$, $\tau>0$, $1<p<\infty$ and $\Lambda$
be $N-$Carleson in every half-plane and relatively dense (for some
$r>0$). Then, the restriction operator $R_{\Lambda}$ is an isomorphism
from $PW_{\tau}^{p}$ onto $X_{\tau,r}^{p}(\Lambda)$ if and only
if $\Lambda$ satisfies $\left(H_{N}\right)_{\tau,p}$.\end{thm}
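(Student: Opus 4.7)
The plan is to adapt the Lyubarskii--Seip strategy, replacing the single-pole Lagrange-type interpolation formula by Newton-type interpolants built from the (pseudohyperbolic or euclidean) divided differences of Section~2, and coupling this with Hartmann's Theorem~\ref{thm Hartmann} applied on each half-plane separately. The partition of $\Lambda$ into inner clusters $\tau_n\subset\Lambda_\epsilon$ (near $\mathbb R$) and outer clusters $\tau_n\subset\Lambda_{\epsilon,\infty}$ (in $\mathbb C_\epsilon^+$ or $\mathbb C_{-\epsilon}^-$) constructed in Section~\ref{An-adapted-partition} plays a crucial role on both sides of the equivalence.

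For necessity, relative density is granted by Proposition~\ref{nec rel dens}, so only the two conditions defining $(H_N)_{\tau,p}$ remain. The existence of $S$ as an entire function of exponential type exactly $\tau$ follows because the isomorphism forces the counting function of $\Lambda$ to have the standard Paley--Wiener density $\tau/\pi$, upon which a Phragm\'en--Lindel\"of argument upgrades the canonical product to the required type. For condition~$(ii)$, one tests the inverse of $R_\Lambda$ on unit data concentrated on one cluster $\tau_n$ at a time; applying Lemma~\ref{Local estimates DD} to pass from divided differences to true point values, and comparing with Hartmann's theorem on the corresponding half-plane, one reduces the Muckenhoupt estimate on $(|S|/d_N)^p$ to the continuous/discrete Muckenhoupt conditions $(A_p)/(\mathfrak A_p)$ of Lyubarskii--Seip applied along a relatively dense subsequence of $\Lambda$.

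For sufficiency I would first prove boundedness of $R_\Lambda$. For the outer clusters $\tau_n\subset\mathbb C_\epsilon^+$, apply Hartmann's theorem to $g_+:=e^{i\tau z}f \in H^p(\mathbb C_\epsilon^+)$, whose norm is equivalent to $\|f\|_{PW_\tau^p}$ by the Plancherel--Poly\`a inequality; this controls the pseudohyperbolic divided-difference part of $\|R_\Lambda f\|_{X_{\tau,r}^p(\Lambda)}$ coming from the upper half-plane, and the symmetric argument for $g_-:=e^{-i\tau z}f$ handles the lower one. For inner clusters $\tau_n\subset\Lambda_\epsilon$, Lemma~\ref{Local estimates DD} applied to the rectangles $R_n$ of Remark~\ref{rem:inclusionrectangle} controls the euclidean divided differences by $\sup|f|$ on a slightly enlarged neighborhood, and these local sup-norms are summed against the weight $(1+|\mathrm{Im}(\lambda_{n,0})|)$ via a Plancherel--Poly\`a estimate combined with relative density.

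The heart of the proof is surjectivity. Given $a\in X_{\tau,r}^p(\Lambda)$, I would construct the explicit interpolant
\[
f(z) := S(z) \sum_{n\in\mathbb Z} e^{\mp i\tau\lambda_{n,0}} \sum_{k=1}^{|\tau_n|} \tilde\Delta_{\tau_n}^{k-1}\!\bigl(ae^{\pm i\tau\cdot}(\lambda_n^{(k)})\bigr)\, K_{n,k}(z),
\]
where $K_{n,k}$ is the appropriate Newton-type Cauchy kernel with poles at $\lambda_{n,1},\dots,\lambda_{n,k}$, chosen so that together with the $S(z)$ prefactor one recovers the Lyubarskii--Seip basis exactly when $|\tau_n|=1$. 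The interpolation identity $f|_\Lambda=a$ is then a direct consequence of Lemma~\ref{Interpolation Polynom}, and injectivity of $R_\Lambda$ drops out from the uniqueness of $S$ together with integrability in $PW_\tau^p$. The estimate $\|f\|_{PW_\tau^p}\lesssim\|a\|_{X_{\tau,r}^p(\Lambda)}$ reduces, after bringing the exponentials inside and transferring divided differences to point values via Lemma~\ref{Local estimates DD}, to the boundedness of the Hilbert transform on $L^p\!\bigl((|S|/d_N)^p\bigr)$, which is precisely condition $(H_N)_{\tau,p}(ii)$. The main technical obstacle is to handle off-diagonal interactions between distinct clusters that are euclidean-close while lying in opposite half-planes, where no single pseudohyperbolic separation is available; the uniform bound $\inf_{n\neq m}\rho(R_n,R_m)>0$ of Remark~\ref{rem:inclusionrectangle}, combined with the local minimality in Remark~\ref{rem:dN}(2) --- which ensures that near each real point $d_N$ is dominated by a single $p_{n_x}$ --- should suffice, via a Schur-type argument, to tame those cross-terms and leave only the diagonal contribution governed by the Muckenhoupt weight.
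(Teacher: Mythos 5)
Your overall plan follows the right template (Lyubarskii--Seip plus divided differences and Hartmann's theorem), but at the three places where the real work happens the proposal either skips the key device or replaces it with an unproven claim. First, injectivity does not ``drop out from the uniqueness of $S$ together with integrability'': if $f\in PW_{\tau}^{p}$ vanishes on $\Lambda$ then $\phi=f/S$ is entire of type zero, but to conclude that $\phi$ is constant one must bound it on the imaginary axis, and this requires the lower estimate $|S(z)|\gtrsim e^{\tau|\mathrm{Im}(z)|}|\mathrm{Im}(z)|^{1/q}(1+|z|)^{-1}$ off the exceptional sets $A_{n}$. That estimate is exactly where $(H_{N})_{\tau,p}-(ii)$ enters, through the factorization $S_{1}=e^{-i\tau\cdot}B_{1}G_{1}$ with $|G_{1}|^{-q}$ satisfying $(A_{q})$ and hence $z\mapsto 1/(G_{1}(z)(z+i))\in H_{+}^{q}$; without it the Phragm\'en--Lindel\"of step has nothing to work with. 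Second, for the necessity of $(ii)$ you never explain how the continuous weight $(|S|/d_{N})^{p}$ is actually reached. The paper's mechanism is the construction, via the maximum/minimum principle and the intermediate value theorem, of points $\theta_{n}\in\partial\Omega_{n}$ with $|S(\theta_{n})|=\delta\omega_{n}$; evaluating the interpolant at the $\theta_{n}$ converts the problem into boundedness of the discrete Hilbert transform $\mathcal{H}_{\Gamma,\Theta}$ on $l^{p}(\omega^{p})$, and a separate technical lemma ($\omega_{n}^{\alpha}\omega_{n+1}^{1-\alpha}\asymp|S(x)|/d_{N}(x)$) bridges the discrete and continuous Muckenhoupt conditions. ``Testing the inverse on unit data concentrated on one cluster'' does not by itself produce either of these steps.

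Third, your surjectivity argument is a genuinely different route --- a direct Newton-type series with kernels $K_{n,k}$ whose norm is to be controlled by a Schur-type argument on the cross-terms --- and the part you defer (``tame those cross-terms'') is precisely the hard part. The paper avoids it: it takes finite data, forms the plain Lagrange interpolant $\sum_{\lambda}a(\lambda)S(z)/(S'(\lambda)(z-\lambda))$, splits it into $f^{+}+f^{-}$ according to the half-planes, and estimates its distance to $\{g\in PW_{\tau}^{p}:g|\Lambda=0\}$ by duality. The dual pairing $N(h)$ is rewritten via the residue theorem as contour integrals over the separating rectangles $\Gamma_{n}=\partial R_{n}$, the Newton polynomials $P_{\tau_{n},\alpha}$ and $Q_{\tilde{\tau}_{n},\psi}$ are bounded by sums of divided differences, and the dual factor is controlled by Hartmann's theorem applied to $\psi=\mathcal{H}(\tilde{G}\tilde{h})/\tilde{G}\in H_{+}^{q}$, whose membership again comes from the Muckenhoupt condition. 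If you want to keep your direct construction you would need to prove convergence of the series and the off-diagonal bounds from scratch; as written, the proposal asserts rather than proves them.
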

\begin{rem}
We will see in the following that $\left(H_{N}\right)_{\tau,p}-(ii)$
can be replaced by $(ii)'$, which is
\begin{itemize}
\item $(ii)'$ There exists a subsequence $\Gamma=\left\{ \gamma_{n}:\: n\geq1\right\} \subset\Lambda$,
still relatively dense, such that, if $\sigma_{\gamma_{n}}$ is the
set containing $\gamma_{n}$, the sequence 
\[
\left(\frac{\left|S'\left(\gamma_{n}\right)\right|^{p}}{\underset{\underset{\lambda\not=\gamma_{n}}{\lambda\in\sigma_{\gamma_{n}}}}{\prod}\left|\gamma_{n}-\lambda\right|^{p}}\right)_{n\geq1}
\]
satisfies the discrete Muckenhoupt condition $(\mathfrak{A}_{p})$.
\end{itemize}
\end{rem}
It is clear that for $N=1$, $d_{1}(x)=d(x,\Lambda)$ and $(H_{1})_{\tau,p}$
with the Carleson condition and the relative density corresponds exactly
to the $(LS)_{\tau,p}$ conditions. The proof of Theorem \ref{main result}
will be done in Section \ref{Proof-of-Theorem main}.
\begin{rem}
The choice of $\epsilon=r$ in our construction ensures that, for
every $x\in\mathbb{R}$, $\tau_{n_{x}}=\sigma_{n_{x}}^{'}$ and permits
us to avoid tedious considerations but the conclusion or Theorem \ref{main result}
is still true with any choice of $\epsilon>0$. 
\end{rem}
\bigskip

We will discuss below the necessity of the $N-$Carleson condition
in Theorem \ref{Thm : necessity}. In Theorem \ref{main result},
the definition of the range of $R_{\Lambda}$ definitely depends on
the partition of $\Lambda$ which is possible because of the $N-$Carleson
condition. In Section \ref{sec:About-the-Carleson}, we will construct
a space without the \emph{a priori} assumption that $\Lambda$ is
$N-$Carleson in every half-plane.

\clearpage

\section{\label{Proof-of-Theorem main}Proofs}

\subsection{Proof of Proposition \ref{nec rel dens}}
\begin{proof}
Let us suppose to the contrary that there exists a real sequence $\left\{ x_{j}\right\} _{j\geq1}$
and a sequence of positive numbers $\left\{ r_{j}\right\} _{j\geq1}$
such that $r_{j}\to\infty$, $j\to\infty$ and 
\[
B(x_{j},r_{j})\cap\Lambda=\emptyset.
\]
We consider the functions of $PW_{\tau}^{p}$ defined by 
\[
f_{j}(z):=\frac{\sin\tau\left(z-x_{j}\right)}{\tau\left(z-x_{j}\right)},\quad z\in\mathbb{C},\; j\geq1.
\]
Since $R_{\Lambda}$ is an isomorphism, we obtain that 
\[
1\asymp\left\Vert f_{j}\right\Vert _{p}^{p}\asymp\left\Vert R_{\Lambda}f_{j}\right\Vert _{X_{\tau,\epsilon}^{p}\left(\Lambda\right)}^{p}.
\]
We will show that $\left\Vert R_{\Lambda}f_{j}\right\Vert _{X_{\tau}^{p}\left(\Lambda\right)}^{p}\longrightarrow0$,
$j\to\infty$, which implies the required contradiction. From the
definition, we have 
\[
\left\Vert R_{\Lambda}f_{j}\right\Vert _{X_{\tau,\epsilon}^{p}\left(\Lambda\right)}^{p}=\sum_{n\geq1}\left(1+\left|\text{Im}\left(\lambda_{n,0}\right)\right|\right)\sum_{k=1}^{\left|\tau_{n}\right|}\left|\tilde{\Delta}_{\tau_{n}}^{k-1}\left(f_{j}e^{\pm i\tau\cdot}\left(\lambda_{n}^{(k)}\right)\right)\right|^{p}.
\]
Using Lemme \ref{Local estimates DD} (see \cite[p. 95]{Gau11} for
details), we can see that, for every $n\geq1$ and every $1\leq k\leq\left|\tau_{n}\right|$,
\[
\left|\tilde{\Delta}_{\tau_{n}}^{k-1}\left(f_{j}e^{\pm i\tau\cdot}\left(\lambda_{n}^{(k)}\right)\right)\right|^{p}\lesssim\frac{1}{\left|\lambda_{n,0}-x_{j}\right|^{p}},
\]
which implies
\[
\left\Vert R_{\Lambda}f_{j}\right\Vert _{X_{\tau,\epsilon}^{p}\left(\Lambda\right)}^{p}\lesssim\sum_{n\geq1}\frac{1+\left|\text{Im}\left(\lambda_{n,0}\right)\right|}{\left|\lambda_{n,0}-x_{j}\right|^{p}}.
\]
On the other hand, $p>1$ and so we can find $\alpha>0$ such that
$p-\alpha>1$. Recall that $\left|\lambda_{n,0}-x_{j}\right|\geq r_{j}$
and let us write
\[
\left\Vert R_{\Lambda}f_{j}\right\Vert _{X_{\tau,\epsilon}^{p}\left(\Lambda\right)}^{p}\lesssim\frac{1}{r_{j}^{\alpha}}\sum_{n\geq1}\frac{1+\left|\text{Im}\left(\lambda_{n,0}\right)\right|}{\left|\lambda_{n,0}-x_{j}\right|^{p-\alpha}}.
\]
We split this sum in two parts, writing $\left\{ \lambda_{n,0}:\: n\geq1\right\} =A^{+}\cup A^{-}$,
where 
\[
A^{+}\subset\left(\mathbb{C}^{+}\cup\mathbb{R}\right)\subset\mathbb{C}_{-\frac{1}{2}}^{+}
\]
and 
\[
A^{-}\subset\mathbb{C}^{-}\subset\mathbb{C}_{\frac{1}{2}}^{-}.
\]
Since $r_{j}\to\infty$, $j\to\infty$, we obtain $\left|\lambda_{n,0}-x_{j}\right|\asymp\left|\lambda_{n,0}-x_{j}\pm i\right|$.
It follows that the functions
\[
g^{\pm}:z\mapsto\frac{1}{z-x_{j}\pm i}\in H^{p-\alpha}\left(\mathbb{C}_{\mp\frac{1}{2}}^{\pm}\right).
\]
Now, $A^{\pm}$ is Carleson in $\mathbb{C}_{\mp\frac{1}{2}}^{\pm}$
, thus 
\[
\sum_{\lambda\in A^{\pm}}\frac{1+\left|\text{Im}\left(\lambda\right)\right|}{\left|\lambda-x_{j}\pm i\right|^{p-\alpha}}=\sum_{\lambda\in A^{\pm}}\frac{1+\left|\text{Im}\left(\lambda\right)\right|}{\left|g^{\pm}\left(\lambda\right)\right|^{p-\alpha}}\lesssim\left\Vert g\right\Vert _{H^{p-\alpha}\left(\mathbb{C}_{\mp\frac{1}{2}}^{\pm}\right)}^{p-\alpha}\lesssim1.
\]
We finally obtain that 
\[
\left\Vert R_{\Lambda}f_{j}\right\Vert _{X_{\tau,\epsilon}^{p}\left(\Lambda\right)}^{p}\lesssim\frac{1}{r_{j}^{\alpha}}\to0,j\to\infty,
\]
which is the required contradiction and ends the proof.
\end{proof}

\subsection{Proof of Theorem \ref{main result}}

The proof of Theorem \ref{main result} follows the main ideas of
Lyubarskii and Seip's paper but needs an important technical work
to characterize this more general case.

\subsubsection{Paley-Wiener Spaces}

We will need some well known facts about Paley-Wiener spaces that
we recall here. First, we have the Plancherel-Poly\`a inequality
(see e.g. \cite{Le96} or \cite[p. 95]{Se04}).
\begin{prop}
(Plancherel-Poly\`a).\label{plancherel-polya}Let $f\in PW_{\tau}^{p}$
and $a\in\mathbb{R}$. Then, 
\[
\int_{-\infty}^{+\infty}|f(x+ia)|^{p}dx\leq e^{\tau p|a|}\left\Vert f\right\Vert _{p}^{p}.
\]

\end{prop}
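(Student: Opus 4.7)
The plan is to reduce the inequality to the monotonicity of horizontal $L^p$-means in a Hardy space of a half-plane. By an analogous construction in the lower half-plane, it will suffice to handle $a > 0$; the case $a = 0$ is trivial. I would introduce the auxiliary entire function
\[
F(z) := e^{i\tau z} f(z),
\]
for which $|F(x)| = |f(x)|$ on $\mathbb{R}$, so $\|F\|_{L^p(\mathbb{R})} = \|f\|_p$, and $|F(z)| = e^{-\tau y}|f(z)|$ for $z = x + iy$. Since $f$ has exponential type at most $\tau$, the factor $e^{-\tau y}$ exactly compensates the exponential growth of $f$ in $\mathbb{C}^+$, so that $F$ is of subexponential (in fact, minimal) type in the closed upper half-plane.

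The main step is to show $F \in H^p_+$ with $\|F\|_{H^p_+} \leq \|f\|_p$. Once this is granted, the classical fact that
\[
y \mapsto \int_\mathbb{R} |F(x+iy)|^p\, dx \text{ is non-increasing on } (0, \infty)
\]
(valid for any $F \in H^p_+$) yields $\int_\mathbb{R} |F(x+ia)|^p\, dx \leq \|F\|_{H^p_+}^p \leq \|f\|_p^p$ for every $a > 0$, and the identity $|f(x+ia)|^p = e^{\tau p a}|F(x+ia)|^p$ produces the desired bound. For $a < 0$, the symmetric construction $G(z) := e^{-i\tau z} f(z) \in H^p_-$ gives the analogous conclusion with $|a|$ replacing $a$ in the exponent.

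The hard part will be establishing $F \in H^p_+$: the boundary data $F|_\mathbb{R}$ lie in $L^p(\mathbb{R})$ and $F$ is of subexponential type in $\mathbb{C}^+$, but one still needs uniform $L^p$ control along horizontal lines $\{\text{Im}(z) = y\}$, $y > 0$. I would argue this by combining the subharmonicity of $|F|^p$ with a Phragmén-Lindelöf estimate adapted to the $L^p$ setting, applied on half-disks of increasing radius in $\mathbb{C}^+$. An equivalent and perhaps cleaner route is to first approximate $f$ by rapidly decaying entire functions of exponential type at most $\tau + \epsilon$ (for instance, multiplying by a narrow sinc-type factor $(\sin(\epsilon z)/(\epsilon z))^k$ with $k$ large), for which the $H^p$-membership of the corresponding auxiliary function is immediate; check the inequality for these approximants with constant $e^{p(\tau + k\epsilon)a}$; and let $\epsilon \to 0$ using $L^p$-convergence on $\mathbb{R}$ together with Fatou's lemma on the left-hand side.
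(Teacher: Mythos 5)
The paper offers no proof of this proposition: it is quoted as a classical fact with references to Levin and to Seip, so there is no internal argument to compare against. Your reduction is the standard one and its second half is correct: once $F(z)=e^{i\tau z}f(z)$ is known to lie in $H_{+}^{p}$ (and symmetrically $e^{-i\tau z}f(z)\in H_{-}^{p}$), the monotonicity of $y\mapsto\int_{\mathbb{R}}|F(x+iy)|^{p}\,dx$ together with $|f(x+ia)|=e^{\tau a}|F(x+ia)|$ gives exactly the stated bound, and there is no circularity in using the inclusion $PW_{\tau}^{p}\subset e^{-i\tau\cdot}H_{+}^{p}$ here even though the paper presents it as a consequence of Plancherel--P\'olya, provided you establish that inclusion independently.

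That independent step is where your proposal has a genuine gap, and it is the entire content of the proposition. First, your claim that the factor $e^{-\tau y}$ ``exactly compensates'' the growth of $f$ in $\mathbb{C}^{+}$ is not automatic: exponential type $\leq\tau$ only gives the indicator bound $h_{f}(\theta)\leq\tau$, so $h_{F}(\theta)\leq\tau(1-\sin\theta)$, which is strictly positive in directions close to the real axis; the improved bound $h_{f}(\theta)\leq\tau\sin\theta$ on $(0,\pi)$ is a Cartwright-class fact that itself uses $f|_{\mathbb{R}}\in L^{p}$. Second, neither sketched route closes: the subharmonicity/Phragm\'en--Lindel\"of argument on half-disks needs integral control on nearby horizontal lines, which is the statement being proved, and the approximation by $f(z)\left(\sin(\epsilon z)/(\epsilon z)\right)^{k}$ only improves decay along $\mathbb{R}$ --- it does nothing to provide the uniform bound $\sup_{y>0}\int_{\mathbb{R}}|\cdot|^{p}\,dx$ required for $H^{p}$-membership of the approximants, so that membership is not ``immediate.'' The standard way to fill the gap is: $f|_{\mathbb{R}}\in L^{p}$ forces $\int_{\mathbb{R}}\log^{+}|f(x)|(1+x^{2})^{-1}\,dx<\infty$, so $f$ is in the Cartwright class; Krein's theorem then shows $f$ is of bounded type in $\mathbb{C}^{+}$ with mean type at most $\tau$ and no extraneous singular inner factor, hence $e^{i\tau z}f(z)\in\mathcal{N}^{+}$; and the Smirnov maximum principle (a function of $\mathcal{N}^{+}$ with $L^{p}$ boundary data lies in $H_{+}^{p}$) yields $F\in H_{+}^{p}$ with $\left\Vert F\right\Vert _{H_{+}^{p}}=\left\Vert f\right\Vert _{p}$. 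With that step supplied, your argument is complete and coincides with the proof in the cited references.
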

It follows that for every $f\in PW_{\tau}^{p}$, the function $z\mapsto e^{i\tau z}f(z)$
belongs to $H_{+}^{p}$. It also follows that translation is an isomorphism
from $PW_{\tau}^{p}$ onto itself. The second fact is a pointwise
estimate; there exists a constant $C=C(p)$ such that for every $f\in PW_{\tau}^{p}$,
we have
\begin{equation}
|f(z)|\leq C\left\Vert f\right\Vert _{p}\left(1+|\text{Im}(z)\right)^{-\frac{1}{p}}e^{\tau|\text{Im}(z)|},\qquad z\in\mathbb{C}.\label{Pointwise PW}
\end{equation}

\subsubsection{Necessary conditions}

Let us do the construction of subsection \ref{An-adapted-partition}
with $\epsilon=r$ and suppose that $R_{\Lambda}$ is an isomorphism
between $PW_{\tau}^{p}$ and $X_{\tau,\epsilon}^{p}\left(\Lambda\right)$.
The necessity of $\left(H_{N}\right)-(i)$ can be shown exactly as
in \cite{LS97} and so we do not prove it here. We first show that
the condition $(ii)'$ is necessary. Then, with a technical lemma,
adapted from \cite{LS97}, we prove that $(ii)'$ implies $(ii)$. 

Since $R_{\Lambda}$ is bijective, for each $\lambda\in\Lambda$,
there is a unique function $f_{\lambda}\in PW_{\tau}^{p}$ such that
\[
f_{\lambda}(\mu)=\left\{ \begin{array}{cc}
1, & \text{if }\mu=\lambda\\
0, & \text{if }\mu\not=\lambda
\end{array}\right..
\]
As in \cite{LS97}, it can be shown that $f_{\lambda}$ only vanishes
on $\Lambda\setminus\left\{ \lambda\right\} $ and that $f_{\lambda}$
is of exponential type $\tau$ (if its type was $\tau'<\tau$ then
considering the function $e^{i\left(\tau-\tau'\right)\left(\cdot-\lambda\right)}f_{\lambda}$,
we would obtain a contradiction with the injectivity of $R_{\Lambda}$).
Moreover, $z\mapsto\left(z-\lambda\right)f_{\lambda}(z)$ is a function
of the Cartwright Class $\mathcal{C}$ vanishing exactly on $\Lambda$
(see e.g. \cite{Le96} for definition and general results on $\mathcal{C}$).
Hence, since $S$ is also of exponential type $\tau$, $S(z)=c_{\lambda}\left(z-\lambda\right)f_{\lambda}(z)$,
$z\in\mathbb{C},$ or 
\[
f_{\lambda}(z)=\frac{S(z)}{S'(\lambda)\left(z-\lambda\right)}.
\]

For each $n\geq1$, the holomorphic function
\[
g_{n}:\: z\mapsto\frac{S(z)}{\underset{\lambda\in\sigma_{n}^{'}}{\prod}\left(z-\lambda\right)}
\]
does not vanish in $\Omega_{n}$ (see Formula \ref{eq:distance Omega positive}).
Moreover, choosing $\lambda_{n,0}^{'}\in\sigma_{n}^{'}$, 
\[
g_{n}(\lambda_{n,0})=\frac{S'\left(\lambda_{n,0}^{'}\right)}{\underset{\underset{\lambda\not=\lambda_{n,0}}{\lambda\in\sigma_{n}^{'}}}{\prod}\left(\lambda_{n,0}^{'}-\lambda\right)}.
\]
Hence, it follows from the maximum and the minimum principle that
\[
\inf_{\xi\in\partial\Omega_{n}}\left|\frac{S(\xi)}{\underset{\lambda\in\sigma_{n}^{'}}{\prod}\left(\xi-\lambda\right)}\right|\leq\left|\frac{S'\left(\lambda_{n,0}^{'}\right)}{\underset{\underset{\lambda\not=\lambda_{n,0}}{\lambda\in\sigma_{n}^{'}}}{\prod}\left(\lambda_{n,0}^{'}-\lambda\right)}\right|\leq\sup_{\xi\in\partial\Omega_{n}}\left|\frac{S(\xi)}{\underset{\lambda\in\sigma_{n}^{'}}{\prod}\left(\xi-\lambda\right)}\right|.
\]
From the intermediate values theorem, we deduce the existence of a
point $\theta_{n}\in\partial\Omega_{n}$ such that
\begin{equation}
\left|S(\theta_{n})\right|=\delta\frac{\left|S'(\lambda_{n,0}^{'})\right|}{\underset{\underset{\lambda\not=\lambda_{n,0}}{\lambda\in\sigma_{n}^{'}}}{\prod}\left|\lambda_{n,0}^{'}-\lambda\right|}=:\delta\omega_{n}.\label{eq:s theta n}
\end{equation}
Let us consider now a subsequence $\Gamma:=\left(\gamma_{n}\right)_{n\geq1}$
of $\left\{ \lambda_{n,0}^{'}:\: n\geq1\right\} $ which is still
relatively dense and such that 
\[
\inf_{n\geq1}\left(\text{Re}\left(\gamma_{n+1}\right)-\text{Re}\left(\gamma_{n}\right)\right)>0.
\]
We define $\sigma_{\gamma_{n}}$ as the set containing $\gamma_{n}$.
The sequence $\Theta:=\left(\theta_{n}\right)_{\geq1}$ denotes the
previous $\theta_{n}$, corresponding to $\gamma_{n}$, and for $n\geq1$,
we set
\[
\omega_{n}:=\left|\frac{S'\left(\gamma_{n}\right)}{\underset{\underset{\lambda\neq\gamma_{n}}{\lambda\in\sigma_{\gamma_{n}}}}{\prod}\left(\gamma_{n}-\lambda\right)}\right|
\]
so that
\[
\left|S\left(\theta_{n}\right)\right|=\delta\omega_{n}.
\]
We show that the discrete Hilbert transform $\mathcal{H}_{\Gamma,\Theta}$
is bounded from $l^{p}(\omega)$ into itself. Indeed, let $\left(a_{n}\right)_{n\geq1}$
be a finite sequence of $l^{p}(\omega)$. Then, the sequence
\[
a(\lambda):=\left\{ \begin{array}{ll}
a_{n}S'\left(\gamma_{n}\right) & ,\:\text{if }\lambda=\gamma_{n}\\
0 & ,\:\text{if }\lambda\in\Lambda\setminus\Gamma
\end{array}\right.
\]
belongs to $X_{\tau}^{p}(\Lambda)$ because, if $\gamma_{k}=\lambda_{n,0}^{'}=\lambda_{n,\left|\sigma_{\gamma_{n}}\right|}$
is choosen as the {}``last'' point of $\sigma_{n}^{'}$, 
\[
\tilde{\Delta}_{\sigma_{n}}^{k-1}\left(ae^{i\tau\cdot}\left(\lambda_{n}^{(k)}\right)\right)=0,\qquad k<|\sigma_{n}|
\]
and 
\[
\left|\tilde{\Delta}_{\sigma_{n}}^{|\sigma_{n}|-1}\left(ae^{i\tau\cdot}\left(\lambda_{n}^{(|\sigma_{n}|)}\right)\right)\right|=\frac{\left|a_{n}S'(\lambda_{n,0}^{'})\right|e^{-\tau\left|\text{Im}\left(\lambda_{n,0}^{'}\right)\right|}}{\underset{\underset{\lambda\not=\lambda_{n,0}^{'}}{\lambda\in\sigma_{n}^{'}}}{\prod}\left|\lambda-\lambda_{n,0}^{'}\right|}.
\]
Thus, from (\ref{eq:s theta n}), we obtain, observing that $1+\left|\text{Im}(\lambda_{n,0}^{'})\right|$
and $\left|e^{i\tau\lambda}\right|$, $\lambda\in\sigma_{n}^{'}$,
are comparable to a constant since $\sigma_{n}^{'}$ is close to $\mathbb{R}$,
\begin{eqnarray}
\left\Vert a\right\Vert _{X_{\tau}^{p}(\Lambda)}^{p} & = & \sum_{n}\left(1+\left|\text{Im}(\lambda_{n,0}^{'})\right|\right)\left|\tilde{\Delta}_{\sigma_{n}^{'}}^{|\sigma_{n}^{'}|-1}\left(ae^{i\tau\cdot}\left(\lambda_{n}^{(|\sigma_{n}^{'}|)}\right)\right)\right|^{p}\nonumber \\
 & \asymp & \sum_{n}\left(\frac{\left|a_{n}S'(\lambda_{n,0}^{'})\right|}{\underset{\underset{\lambda\not=\lambda_{n,0}^{'}}{\lambda\in\sigma_{n}^{'}}}{\prod}\left|\lambda_{n,0}^{'}-\lambda\right|}\right)^{p}=\sum_{n}\omega_{n}^{p}\left|a_{n}\right|^{p}.\label{eq:normea}
\end{eqnarray}
 So, let $f\in PW_{\tau}^{p}$ be the (unique) solution of the interpolation
problem $f|\Lambda=a$. Notice that, since $R_{\Lambda}$ is an isomorphism
onto $X_{\tau}^{p}(\Lambda)$, then 
\begin{equation}
\left\Vert f\right\Vert _{p}^{p}\lesssim\left\Vert a\right\Vert _{X_{\tau}^{p}(\Lambda)}^{p}.\label{eq:normefnormea}
\end{equation}
This function is of the form $f(z)=\sum_{j}a_{j}\frac{S(z)}{z-\gamma_{j}}$
and so, with (\ref{eq:s theta n}) we have
\[
\sum_{n}\left|f(\theta_{n})\right|^{p}=\sum_{n}\left|\sum_{j}a_{j}\frac{S\left(\theta_{n}\right)}{\theta_{n}-\gamma_{j}}\right|=\sum_{n}\left|S\left(\theta_{n}\right)\right|\left|\sum_{j}\frac{a_{j}}{\theta_{n}-\gamma_{j}}\right|
\]
and, from the construction of $\Theta$, we obtain
\begin{equation}
\sum_{n}\left|f(\theta_{n})\right|^{p}=\delta^{p}\sum_{n}\omega_{n}^{p}\left|\left(\mathcal{H}_{\Gamma,\Theta}(\left(a_{j}\right)_{j\geq1})\right)_{n}\right|^{p}.\label{eq:hilbert1}
\end{equation}
On the other hand, the Poly\`a inequality (see \cite[Lecture 20]{Le96}),
and the inequalities (\ref{eq:normefnormea}) and (\ref{eq:normea})
give 
\begin{equation}
\sum_{n}\left|f(\theta_{n})\right|^{p}\lesssim\left\Vert f\right\Vert _{p}^{p}\lesssim\left\Vert a\right\Vert _{X_{\tau}^{p}(\Lambda)}^{p}\lesssim\sum_{n}\omega_{n}^{p}\left|a_{n}\right|^{p}.\label{eq:hilbert2}
\end{equation}
From (\ref{eq:hilbert1}) and (\ref{eq:hilbert2}), we deduce that
$\mathcal{H}_{\Gamma,\Theta}$ is bounded from $l^{p}(\omega^{p})$
into itself. Using a slight modified version of \cite[Lemma 1]{LS97},
we can conclude that the weight $\left(\omega_{n}^{p}\right)_{n\geq1}$
satisfies the discrete Muckenhoupt condition $\left(\mathfrak{A}_{p}\right)$.
\begin{rem}
\label{rem: distance entre les gamman} It follows from the weak density
condition ($\left(H_{N}\right)-(i)$), the Genralized Carleson condition
(\ref{(CG)}) on $\left(B_{\sigma_{\gamma_{n}}}\right)_{n}$ and the
growth of the sequence $\left(\text{Re}\left(\gamma_{n}\right)\right)_{n}$
that we have $\text{Re}(\gamma_{n+1})-\text{Re}\left(\gamma_{n}\right)\leq3\epsilon$.
This implies that
\[
\delta_{0}^{'}\leq\left|\gamma_{n}-\gamma_{n+1}\right|\leq4\epsilon.
\]

\end{rem}
Now, in order to prove $(iii)$, we use the following lemma, adapted
from \cite[Lemma 2]{LS97}. 
\begin{lem}
Suppose $x\in\mathbb{R}$ and $\text{Re}(\gamma_{n})\leq x\leq\text{Re}(\gamma_{n+1})$.
Then, there exists an $\alpha=\alpha(x)\in[0,1]$ such that
\[
\omega_{n}^{\alpha}\omega_{n+1}^{1-\alpha}\asymp\frac{\left|S(x)\right|}{d_{N}(x)},
\]
uniformly with respect to $x\in\mathbb{R}$.
\end{lem}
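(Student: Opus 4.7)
The plan is to imitate the proof of \cite[Lemma 2]{LS97}, but to treat the entire clumps $\sigma_{\gamma_n}$ as single points. First I would factor out from $S$ the local contribution of the two clumps $\sigma_{\gamma_n}$ and $\sigma_{\gamma_{n+1}}$: set $P_k(z):=\prod_{\lambda\in\sigma_{\gamma_k}}(z-\lambda)$ for $k\in\{n,n+1\}$ and write $S(z)=P_n(z)P_{n+1}(z)\,g(z)$. Because the two clumps $\sigma_{\gamma_n}\cup\sigma_{\gamma_{n+1}}$ are separated from all other zeros of $S$ by virtue of the generalized Carleson condition (Proposition~\ref{Ncarleson-CG}) together with Remarks~\ref{rem:diameters} and \ref{rem:inclusionrectangle}, the function $g$ is entire, nonvanishing in a fixed-size neighbourhood of the segment $[\text{Re}(\gamma_n),\text{Re}(\gamma_{n+1})]$, and $\log|g|$ is harmonic and uniformly bounded there (with constants depending only on $\delta$, $N$, $\epsilon$).

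Next I would express the three quantities in the statement through $g$. Unpacking $\omega_k$ via $S'(\gamma_k)=P_k'(\gamma_k)P_{k'}(\gamma_k)g(\gamma_k)$ (with $\{k,k'\}=\{n,n+1\}$) yields
\[
\omega_k \;=\; |P_{k'}(\gamma_k)|\cdot |g(\gamma_k)|, \qquad k\in\{n,n+1\}.
\]
On the $|S(x)|/d_N(x)$ side, a case analysis identifies the minimizing clump $\tau_{n_x}$ in $d_N(x)=p_{n_x}(x)$: clumps far from $\mathbb{R}$ satisfy $p_k(x)\gtrsim\epsilon^N$, hence cannot minimize; by Remark~\ref{rem: distance entre les gamman} only a uniformly bounded number of intermediate $\sigma'_k\notin\Gamma$ can lie between $\gamma_n$ and $\gamma_{n+1}$, and each such clump can be absorbed into $g$ without destroying its local harmonicity/boundedness. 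One then checks that
\[
\frac{|S(x)|}{d_N(x)} \;\asymp\; |g(x)|\cdot\max\bigl(|P_n(x)|,|P_{n+1}(x)|\bigr).
\]

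Using $\delta_0'\le|\gamma_{n+1}-\gamma_n|\le 4\epsilon$ together with the uniform pseudohyperbolic diameter bound $\rho_0'<\epsilon/2$, the quantities $|P_{k'}(\gamma_k)|$ ($k\in\{n,n+1\}$) and $\max(|P_n(x)|,|P_{n+1}(x)|)$ are all mutually comparable (essentially to $|\gamma_{n+1}-\gamma_n|^{\max|\sigma|}$ up to universal factors). The lemma therefore reduces to showing that for some $\alpha\in[0,1]$,
\[
|g(\gamma_n)|^{\alpha}\,|g(\gamma_{n+1})|^{1-\alpha} \;\asymp\; |g(x)|.
\]
Since $\log|g|$ is harmonic and bounded on a fixed disc containing $[\text{Re}(\gamma_n),\text{Re}(\gamma_{n+1})]$, the Harnack inequality gives uniform two-sided bounds on $\log|g(\gamma_n)|, \log|g(\gamma_{n+1})|, \log|g(x)|$ in terms of each other. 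An intermediate value argument applied to the continuous affine map $\alpha\mapsto\alpha\log|g(\gamma_n)|+(1-\alpha)\log|g(\gamma_{n+1})|$ then delivers the required $\alpha\in[0,1]$, and exponentiating closes the argument.

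The main obstacle is the bookkeeping in the second step: uniformly in $x$ and $n$, one must pin down that the minimizing clump for $d_N(x)$ is essentially $\sigma_{\gamma_n}$ or $\sigma_{\gamma_{n+1}}$ (or one of a uniformly bounded number of interleaving clumps), and check that after absorbing these interleaving clumps into $g$ the harmonicity and two-sided boundedness of $\log|g|$ on a fixed neighbourhood of the interval still hold. This is the step where the $N$-Carleson hypothesis, through \eqref{(CG)} and \eqref{eq:distance Omega positive}, and the relative density of $\Lambda$ are used in an essential way.
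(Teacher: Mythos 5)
Your local analysis (identifying the minimizing clump, comparing $|P_{k'}(\gamma_k)|$ and $|P_n(x)|,|P_{n+1}(x)|$ via $\delta_0'\le|\gamma_{n+1}-\gamma_n|\le 4\epsilon$ and the diameter bound) is broadly in the spirit of the paper's treatment of its product $\Pi_1(x)$ over the nearby zeros $\Lambda(x)$. The gap is in how you handle the remaining factor $g$, i.e.\ the contribution of the \emph{distant} zeros. Your claim that $\log|g|$ is uniformly bounded on a fixed neighbourhood of $[\text{Re}(\gamma_n),\text{Re}(\gamma_{n+1})]$, with constants depending only on $\delta,N,\epsilon$, is false: up to the bounded local factors, $|g(x)|\asymp|S(x)|$ there, and $|S|$ on $\mathbb{R}$ is only constrained by the $(A_p)$ condition on $|S|^p/d_N^p$ --- it can grow and decay without any uniform two-sided bound. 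Without a uniform bound, Harnack gives only comparability constants that blow up with $n$, so the final step collapses. Moreover $g$ still vanishes at every point of $\Lambda$ outside the two distinguished clumps (including clumps of $\Lambda_{\epsilon,\infty}$ at height $\asymp\epsilon$ and interleaving clumps of $\Lambda_\epsilon$), so $\log|g|$ is not even harmonic on a fixed-size neighbourhood.

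The missing idea is that the choice of $\alpha$ is forced by the tail product, not delivered afterwards by an intermediate value argument. One must control
\[
\prod_{\lambda\ \mathrm{far}}\frac{\left|\lambda-\gamma_{n}\right|^{\alpha}\left|\lambda-\gamma_{n+1}\right|^{1-\alpha}}{\left|\lambda-x\right|},
\]
and for a relatively dense $\Lambda$ the first-order terms $O\left(\epsilon/\left|\lambda-x\right|\right)$ in the logarithm of each factor are \emph{not} absolutely summable. The argument of \cite[Lemma 2]{LS97}, which the paper follows for its product $\Pi_{2}(x)$, takes $\alpha=\alpha(x)$ so that $x$ is (essentially) the $\alpha$-weighted combination of $\text{Re}(\gamma_{n})$ and $\text{Re}(\gamma_{n+1})$; this cancels the first-order terms and leaves second-order terms $O\left(\epsilon^{2}/\left|\lambda-x\right|^{2}\right)$, which are summable uniformly thanks to the $N$-Carleson separation. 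Your proposal never confronts this cancellation, and the affine interpolation of $\log|g(\gamma_n)|$ and $\log|g(\gamma_{n+1})|$ cannot substitute for it, so the proof as written does not go through.
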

Assuming this lemma to hold, $(iii)$ follows from $(iii)'$ and the
inequality $t^{\alpha}s^{1-\alpha}\leq t+s$, $t,s>0$ and $\alpha\in[0,1]$
(we still refer to \cite{Gau11} for details).
\begin{proof}
For $x\in[\text{Re}(\gamma_{n}),\text{Re}(\gamma_{n+1})]$, we set
$N(x):=\left\{ n:\: d(\sigma_{n}^{'},x)<\epsilon\right\} $ and 
\[
\Lambda(x):=\left(\bigcup_{n\in N(x)}\sigma_{n}^{'}\right)\cup\sigma_{\gamma_{n}}\cup\sigma_{\gamma_{n+1}}.
\]
Notice that $\sigma_{\gamma_{n}}$ and $\sigma_{\gamma_{n+1}}$ may
be subsets of $\bigcup_{n\in N(x)}\sigma_{n}^{'}$. Observe also that
since $\Lambda$ is a finite union of Carleson sequences, we have
\[
\sup_{x\in\mathbb{R}}\left|N(x)\right|<\infty.
\]

For $\alpha\in[0,1]$, we want to show that $\vartheta\asymp1,$ where
\[
\vartheta:=\frac{\omega_{n}^{\alpha}\omega_{n+1}^{1-\alpha}d_{N}(x)}{|S(x)|},
\]
and $x\not\in\Lambda$ (this is not restrictive since the expression
extends continuously to $\Lambda$). From the definition of $S$,
we have that
\[
S'(\lambda)=-\frac{1}{\lambda}\underset{\underset{\mu\not=\lambda}{\mu\in\Lambda}}{\prod}\left(1-\frac{\lambda}{\mu}\right),\qquad\lambda\in\Lambda.
\]
In order to not overcharge notation, all infinite products occurring
below will be understood as symmetric limits of finite products:
\[
\prod_{\lambda\in\Lambda}a\left(\lambda\right)=\lim_{R\to\infty}\prod_{\left|\lambda\right|\leq R}a\left(\lambda\right).
\]

Thus, 
\[
\negthickspace\negthickspace\vartheta=\left(\frac{\left|\frac{1}{\gamma_{n}}\underset{\lambda\in\Lambda\setminus\{\gamma_{n}\}}{\prod}\left(1-\frac{\gamma_{n}}{\lambda}\right)\right|^{\alpha}\left|\frac{1}{\gamma_{n+1}}\underset{\lambda\in\Lambda\setminus\{\gamma_{n+1}\}}{\prod}\left(1-\frac{\gamma_{n+1}}{\lambda}\right)\right|^{1-\alpha}d_{N}(x)}{\underset{\lambda\in\Lambda}{\prod}\left(1-\frac{x}{\lambda}\right)\underset{\lambda\in\sigma_{\gamma_{n}}\setminus\left\{ \gamma_{n}\right\} }{\prod}\left|\lambda-\gamma_{n}\right|^{\alpha}\underset{\lambda\in\sigma_{\gamma_{n+1}}\setminus\left\{ \gamma_{n+1}\right\} }{\prod}\left|\lambda-\gamma_{n+1}\right|^{1-\alpha}}\right).
\]
For $\lambda\in\Lambda\setminus\left\{ \gamma_{n},\gamma_{n+1}\right\} $,
\[
\frac{\left|1-\frac{\gamma_{n}}{\lambda}\right|^{\alpha}\left|1-\frac{\gamma_{n+1}}{\lambda}\right|^{1-\alpha}}{\left|1-\frac{x}{\lambda}\right|}=\frac{\left|\lambda-\gamma_{n}\right|^{\alpha}\left|\lambda-\gamma_{n+1}\right|^{1-\alpha}}{\left|x-\lambda\right|}.
\]
Note also that for the remaining two points $\gamma_{n},\gamma_{n+1}$
we have:
\[
\frac{\left|\frac{1}{\gamma_{n}}\left(1-\frac{\gamma_{n}}{\gamma_{n+1}}\right)\right|^{\alpha}\left|\frac{1}{\gamma_{n+1}}\left(1-\frac{\gamma_{n+1}}{\gamma_{n}}\right)\right|^{1-\alpha}}{\left|\left(1-\frac{x}{\gamma_{n}}\right)\left(1-\frac{x}{\gamma_{n+1}}\right)\right|}=\frac{\left|\gamma_{n+1}-\gamma_{n}\right|^{\alpha}\left|\gamma_{n}-\gamma_{n+1}\right|^{1-\alpha}}{\left|\gamma_{n}-x\right|\left|\gamma_{n+1}-x\right|}.
\]
Now, we split $\vartheta$ in two products $\vartheta=\Pi_{1}(x)\cdot\Pi_{2}(x)$
corresponding essentially to zeros in $\Lambda(x)$ and zeros in $\Lambda\setminus\Lambda(x)$
($d_{N}(x)$ appearing in $\Pi_{1}$):
\begin{eqnarray*}
\Pi_{1}(x): & = & \frac{\underset{\lambda\in\Lambda(x)\setminus\left\{ \gamma_{n}\right\} }{\prod}\left|\lambda-\gamma_{n}\right|^{\alpha}\underset{\lambda\in\Lambda(x)\setminus\left\{ \gamma_{n+1}\right\} }{\prod}\left|\lambda-\gamma_{n+1}\right|^{1-\alpha}d_{N}(x)}{\underset{\lambda\in\Lambda(x)}{\prod}\left|\lambda-x\right|\underset{\lambda\in\sigma_{\gamma_{n}}\setminus\left\{ \gamma_{n}\right\} }{\prod}\left|\lambda-\gamma_{n}\right|^{\alpha}\underset{\lambda\in\sigma_{\gamma_{n+1}}\setminus\left\{ \gamma_{n+1}\right\} }{\prod}\left|\lambda-\gamma_{n+1}\right|^{1-\alpha}}\\
 & = & \frac{\underset{\lambda\in\Lambda(x)\setminus\sigma_{\gamma_{n}}}{\prod}\left|\lambda-\gamma_{n}\right|^{\alpha}\underset{\lambda\in\Lambda(x)\setminus\sigma_{\gamma_{n+1}}}{\prod}\left|\lambda-\gamma_{n+1}\right|^{1-\alpha}d_{N}(x)}{\underset{\lambda\in\Lambda(x)}{\prod}\left|\lambda-x\right|}
\end{eqnarray*}
and
\[
\Pi_{2}(x):=\prod_{\lambda\in\Lambda\setminus\Lambda(x)}\left(\frac{\left|\lambda-\gamma_{n}\right|^{\alpha}\left|\lambda-\gamma_{n+1}\right|^{1-\alpha}}{\left|\lambda-x\right|}\right).
\]
We can write
\[
\Pi_{1}(x)=\left(\frac{\underset{\lambda\in\sigma_{\gamma_{n+1}}}{\prod}\left|\lambda-\gamma_{n}\right|^{\alpha}\underset{\lambda\in\sigma_{\gamma_{_{n}}}}{\prod}\left|\lambda-\gamma_{n+1}\right|^{1-\alpha}d_{N}(x)}{\underset{\sigma_{\gamma_{n}}\cup\sigma_{\gamma_{n+1}}}{\prod}\left|x-\lambda\right|}\right)
\]
\[
\qquad\qquad\quad\times\left(\prod_{\Lambda(x)\setminus\left(\sigma_{\gamma_{n}}\cup\sigma_{\gamma_{n+1}}\right)}\frac{\left|\lambda-\gamma_{n}\right|^{\alpha}\left|\lambda-\gamma_{n+1}\right|^{1-\alpha}}{\left|x-\lambda\right|}\right)
\]
and notice that if $\lambda\in\Lambda(x)\setminus\left(\sigma_{\gamma_{n}}\cup\sigma_{\gamma_{n+1}}\right)$,
then $\lambda\in\sigma_{l}^{'}$ for a suitable $l\in N(x)$, so that
\[
1\lesssim d(\sigma_{\gamma_{n}},\sigma_{l}^{'})\leq\left|\lambda-\gamma_{n}\right|\leq2\rho_{0}^{'}+2\epsilon\lesssim1
\]
and, in view of Remark \ref{rem: distance entre les gamman}, for
$\lambda\in\sigma_{\gamma_{n}}$ and $\mu\in\sigma_{\gamma_{n+1}}$,
we have 
\[
\left|\lambda-\gamma_{n+1}\right|\asymp1\text{ and }\left|\mu-\gamma_{n}\right|\asymp1.
\]
These three relations imply that
\[
\Pi_{1}(x)\asymp\frac{d_{N}(x)}{\underset{\lambda\in\Lambda(x)}{\prod}\left|x-\lambda\right|}.
\]
Now, let $n_{x}$ be such that $d_{N}(x)=p_{n_{x}}(x)$ (we refer
to Remark \ref{rem:dN}). Clearly $n_{x}\in N(x)$. Note also that
for $\lambda\in\sigma_{m}^{'}$, $m\in N(x)$, we have $\left|\lambda-x\right|\le d\left(\sigma_{m}^{'},x\right)+\text{diam}\left(\sigma_{m}^{'}\right)\le\epsilon+\rho_{0}^{'}$.
Hence 
\[
\frac{1}{\left(\epsilon+\rho_{0}^{'}\right)^{\left|N(x)\right|-1}}\leq\frac{d_{N}(x)}{\underset{\lambda\in\Lambda(x)}{\prod}\left|x-\lambda\right|}=\frac{1}{\underset{\lambda\in\Lambda(x)\setminus\sigma_{n_{x}}}{\prod}\left|\lambda-x\right|}\leq\left(\frac{2}{\delta_{0}^{'}}\right)^{N\cdot\left(\left|N(x)\right|-1\right)}
\]
and, from the end of Remark \ref{rem:dN}, we obtain that 
\[
\Pi_{1}(x)\asymp1.
\]
The relation
\[
\Pi_{2}(x)\asymp1
\]
is shown exactly in the same way as in \cite{LS97}, using the  $N-$Carleson
condition. The lemma is proved.
\end{proof}

\subsubsection{Sufficient conditions. }

We show the converse of the theorem in two parts; first, the injectivity
of $R_{\Lambda}$ and then its surjectivity.

\bigskip

Let $f\in PW_{\tau}^{p}$ such that $f(\lambda)=0,$ $\lambda\in\Lambda$.
We want to show that $f\equiv0$. Let us introduce $\phi:=f/S$. It
can be shown that $\phi$ is an entire function of exponential type
$0$ (see \cite[pp. 96-98]{Gau11} for details). The idea of the proof,
given by Lyubarskii and Seip in \cite{LS97}, is to bound $\phi$
by a constant on the imaginary axis and to use a Phragmen-Lindel\"of
theorem to obtain that $\phi$ is a constant. Then, for integrability
reasons, the only possible value for the constant will be zero.

\bigskip

We will proceed as follows: since $\phi$ is analytic, it is bounded
on the compact $\left[-2i\epsilon,2i\epsilon\right]$. In order to
bound $\phi$ on $i\mathbb{R}\setminus\left[-2i\epsilon,2i\epsilon\right]$,
we will use a lower estimate for $S$ in a certain area of $\mathbb{C}$.
Let us introduce 
\[
A_{n}:=\left\{ z\in\mathbb{C}:\:\left|\text{Im}(z)\right|\ge2\epsilon,\;\rho\left(\lambda_{n,0},z\right)<2\rho_{0}<\epsilon\right\} ,\quad n\in\mathbb{Z}.
\]
We begin to show that for $z\in\left(\mathbb{C}_{2\epsilon}^{+}\cup\mathbb{C}_{-2\epsilon}^{-}\right)\setminus\left(\bigcup_{n}A_{n}\right)$,
\begin{equation}
\left|S(z)\right|\gtrsim e^{\tau\left|\text{Im}(z)\right|}\left(\left|\text{Im}(z)\right|\right)^{\frac{1}{q}}\left(1+|z|\right)^{-1}.\label{eq:S lower estimate}
\end{equation}
Indeed, let us introduce
\[
S_{1}(z):=\left(S/B_{\epsilon}\right)(z),
\]
where 
\[
B_{\epsilon}(z):=\prod_{\lambda\in\Lambda_{\epsilon}}\left(c_{\lambda}\frac{z-\lambda}{z-\overline{\lambda}+3i\epsilon}\right),
\]
is the Blaschke product in $\mathbb{C}_{-\frac{3}{2}\epsilon}^{+}$
associated to $\Lambda_{\epsilon}$, and $c_{\lambda}$ is the unimodular
normalizing constant which ensures the convergence of the Blaschke
product (we do not need the explicit value here). Let $x\in\mathbb{R}$.
Observe that for $n\geq1$ and $\lambda\in\sigma_{n_{x}}^{'}$ , we
have 
\[
\left|x-\overline{\lambda}\right|=\left|x-\lambda\right|\le\epsilon+\text{diam}\left(\sigma_{n_{x}}^{'}\right)\le\epsilon+\rho_{0}^{'}\lesssim1.
\]
Hence, 
\[
\left|x-\overline{\lambda}+3i\epsilon\right|\asymp1.
\]
It follows from these inequalities that 
\[
\left(\prod_{\lambda\in\sigma_{n_{x}}^{'}}\left|\frac{x-\lambda}{x-\overline{\lambda}+3i\epsilon}\right|\right)\asymp d_{N}(x).
\]
Writing 
\[
\left|B_{\epsilon}(x)\right|=\left(\prod_{\lambda\in\sigma_{n_{x}}^{'}}\left|\frac{x-\lambda}{x-\overline{\lambda}+3i\epsilon}\right|\right)\left(\prod_{\lambda\in\Lambda_{\epsilon}\setminus\sigma_{n_{x}}^{'}}\left|\frac{x-\lambda}{x-\overline{\lambda}+3i\epsilon}\right|\right)
\]
and using the fact that $\Lambda_{\epsilon}$ is $N-$Carleson in
$\mathbb{C}_{-\frac{3}{2}\epsilon}^{+}$, we have then that 
\begin{equation}
\left|B_{\epsilon}(x)\right|\asymp d_{N}(x),\label{eq:Bepsilon equiv dN}
\end{equation}
and so $x\mapsto|S_{1}(x)|^{p}$ satisfies $\left(A_{p}\right)$. 

In particular, the function $z\mapsto e^{i\tau z}\frac{S_{1}(z)}{z+i}=e^{i\tau z}\frac{S\left(z\right)}{B_{\epsilon}\left(z+i\right)}$
belongs to $H_{+}^{p}$ and the function $z\mapsto e^{i\tau z}S_{1}(z)$
is a function of $\mathcal{N}^{+}$, the Smirnov Class in the upper
half-plane (for definition and general results, see e.g. \cite[A.4]{Ni02a}).
Hence, we can write
\[
S_{1}(z)=e^{-i\tau z}B_{1}(z)G_{1}(z),\qquad z\in\mathbb{C}^{+},
\]
where $B_{1}$ is the Blaschke product associated to $\Lambda^{+}\setminus\Lambda_{\epsilon}$
and $G_{1}$ is an outer function in $\mathbb{C}^{+}$(observe that
$e^{i\tau\cdot}S_{1}$ cannot contain any inner singular factor).
Thus, $x\mapsto|G_{1}(x)|^{p}$ satisfies $\left(A_{p}\right)$ or
equivalently, $x\mapsto|G_{1}(x)|^{-q}$ satisfies $\left(A_{q}\right)$,
with $\frac{1}{p}+\frac{1}{q}=1$. So, it follows from properties
of functions satisfying Muckenhoupt's $\left(A_{p}\right)$ condition,
that
\[
\phi_{G_{1}}:\: z\mapsto\frac{1}{G_{1}(z)(z+i)}\in H_{+}^{q}
\]
and, from well known estimates in $H_{+}^{q}$, we get
\[
\left|\phi_{G_{1}}\left(z\right)\right|\lesssim\frac{1}{\left(\text{Im}(z)\right)^{\frac{1}{q}}},
\]
and so, for $z\in\mathbb{C}^{+}$,
\[
\left|\frac{1}{G_{1}(z)}\right|\lesssim\left(1+|z|\right)\left(\text{Im}(z)\right)^{-\frac{1}{q}}.
\]
Moreover, because of the $N-$Carleson condition of $\Lambda^{+}\setminus\Lambda_{\epsilon}$,
we have that
\[
\left|B_{1}(z)\right|\gtrsim1,\qquad z\in\mathbb{C}^{+}\setminus\left(\bigcup_{n\geq0}A_{n}\right)
\]
 and so we do have the lower bound for $S_{1}$ stated in (\ref{eq:S lower estimate}).
We notice that $\left|S\left(z\right)\right|\asymp\left|S_{1}\left(z\right)\right|$,
$\text{Im}(z)>2\epsilon$ and so we have the same bound for $S$ in
$\mathbb{C}_{2\epsilon}^{+}$. A similar reasonning gives us the estimate
in $\mathbb{C}_{-2\epsilon}^{-}$.

Using now (\ref{Pointwise PW}) and (\ref{eq:S lower estimate}),
we have for $z\in\left(\mathbb{C}_{2\epsilon}^{+}\cup\mathbb{C}_{-2\epsilon}^{-}\right)\setminus\left(\bigcup_{n}A_{n}\right)$, 

\begin{eqnarray*} \left|\phi(z)\right| &=&\left|\frac{f(z)}{S(z)}\right| \lesssim \frac{\left(1+|z|\right)} { e^{\tau|\text{Im}(z)|} \left|\text{Im}(z)\right|^{\frac{1}{q} }} \frac{ e^{\tau|\text{Im}(z)| } } { \left(1+\left|\text{Im}(z)\right|\right)^{\frac{1}{p}} } \\ &\asymp& \frac{\left(1+|z|\right)} { \left|\text{Im}(z)\right|^{\frac{1}{q} } \left(1+\left|\text{Im}(z)\right|\right)^{\frac{1}{p}} } =:\psi(z). \end{eqnarray*}We
notice then that if $A_{n}\cap i\mathbb{R}\not=\emptyset$, then 
\[
A_{n}\subset S_{\pm}:=\left\{ z\in\mathbb{C^{\pm}}:\;\left|\frac{\text{Im}(z)}{\text{Re}(z)}\right|<\eta\right\} ,
\]
where $\eta$ is a suitable constant. Note that $S_{\pm}$ are Stolz
angles in $\mathbb{C}^{\pm}$ at $x=0$. Since $A_{n}$ is far from
$\mathbb{R}$ and has uniformly bounded pseudohyperbolic diameter,
every $A_{n}$ hitting the imaginary axis will be in the Stolz angle
$S_{+}$ or $S_{-}$. Obviously, there is some $M>0$ such that for
every $z\in\mathbb{C}_{\pm2\epsilon}^{\pm}\cap S_{\pm}$, we have
\[
\left|\psi(z)\right|\leq M.
\]
In particular, $\left|\phi(z)\right|\leq M$ for $z\in\partial A_{n}$
and by the maximum principle,
\[
\left|\phi(iy)\right|\leq M\text{ for }iy\in A_{n}\cap i\mathbb{R}.
\]
Hence, $\phi$ is uniformly bounded on $i\mathbb{R}$ and it follows,
by a Phragmen-Lindel\"of principle that $\phi\equiv K$, and $f=KS$.
Let us now show that $K=0$. Because $x\mapsto\left|S_{1}\left(x\right)\right|^{p}$
satisfies $\left(A_{p}\right)$ we have
\[
\int\left|S_{1}\left(x\right)\right|^{p}=\infty
\]
and, applying the Plancherel-Poly\`a inequality, we also have
\[
\int\left|S_{1}\left(x+2i\epsilon\right)\right|^{p}=\infty
\]
but $\left|S\left(x+2i\epsilon\right)\right|\asymp\left|S_{1}\left(x+2i\epsilon\right)\right|$,
so 
\[
\int\left|S_{1}\left(x+2i\epsilon\right)\right|^{p}=\infty.
\]
We apply again the Plancherel-Poly\`a inequality to obtain
\[
\int\left|S\left(x\right)\right|^{p}=\infty.
\]
From the fact that $f\in PW_{\tau}^{p}$, we have by definition that
$f\in L^{p}$ and since $f=\phi S=KS$, the only possibility is $K=0$
and so $f\equiv0$, which ends the proof of the injectivity of $R_{\Lambda}$.
Now, we can show the last part of the proof.

\bigskip

Let us consider a finite sequence $a=\left(a\left(\lambda\right)\right)_{\lambda\in\Lambda}$
and the solution of the interpolation problem $f\left(\lambda\right)=a\left(\lambda\right)$,
$\lambda\in\Lambda$, given by 
\[
f(z)=\sum_{\lambda\in\Lambda}a(\lambda)\frac{S(z)}{S'(\lambda)(z-\lambda)}.
\]
Since the sum is finite, $f$ is an entire function of type at most
$\tau$. We want to split this sum according to the localization of
the points of $\Lambda$. More precisely, we recall that we have the
decomposition $\Lambda=\bigcup_{n\in\mathbb{Z}}\tau_{n}$ and we have
already introduced
\[
N_{+}=\left\{ n\in\mathbb{Z}:\;\tau_{n}\cap\left(\mathbb{C}^{+}\cup\mathbb{R}\right)\not=\emptyset\right\} \text{ and }N_{-}=\mathbb{Z}\setminus N_{+}.
\]
We set
\[
\Lambda_{+}:=\bigcup_{n\in N_{+}}\tau_{n}\text{ and }\Lambda_{-}:=\bigcup_{n\in N_{-}}\tau_{n}=\Lambda\setminus\Lambda_{+}.
\]
(Observe that since $\text{diam}\left(\tau_{n}\right)<\frac{\epsilon}{2}$,
we have $\Lambda_{+}\subset\mathbb{C}_{-\frac{\epsilon}{2}}^{+}$).
Now, we can write $f=f^{+}+f^{-}$, with 
\[
f^{\pm}(z):=\sum_{\lambda\in\Lambda_{\pm}}a(\lambda)\frac{S(z)}{S'(\lambda)(z-\lambda)}=\sum_{n\in N_{\pm}}\sum_{\lambda\in\tau_{n}}a(\lambda)\frac{S(z)}{S'(\lambda)(z-\lambda)}.
\]
We want to estimate, separately, 
\begin{equation}
\inf\left\{ \left\Vert f^{\pm}-g\right\Vert _{p}:\; g\in PW_{\tau}^{p},\; g|\Lambda=0\right\} .\label{inf a estimer}
\end{equation}
Here we will only consider $f^{+}$, the method is the same for $f^{-}$.
In the following, $\beta$ will be the Blaschke product associated
to $\Lambda_{-\epsilon}^{+}:=\Lambda\cap\mathbb{C}_{-\epsilon}^{+}$
\[
\beta(z)=\prod_{\lambda\in\Lambda_{-\epsilon}^{+}}\left(c_{\lambda}\frac{z-\lambda}{z-\overline{\lambda}+2i\epsilon}\right),\qquad z\in\mathbb{C}_{-\epsilon}^{+},
\]
where again $c_{\lambda}$ is a suitable normalizing factor. For $z\in\mathbb{C}_{-\epsilon}^{+}$,
we write $S(z)=e^{-i\tau z}\beta(z)G(z)$. Observe that $\beta(0)=\prod_{\lambda\in\Lambda}c_{\lambda}\frac{\lambda}{\overline{\lambda}-2i\epsilon}$
(recall that we have assumed $0\not\in\Lambda$). Thus, we can write
\begin{eqnarray*}
G(z) & = & e^{i\tau z}S(z)\beta(z)^{-1}\\
 & = & e^{i\tau z}\prod_{\lambda\in\Lambda}\left(\frac{\lambda-z}{\lambda}\right)\prod_{\lambda\in\Lambda_{-\epsilon}^{+}}\left(c_{\lambda}\frac{z-\overline{\lambda}+2i\epsilon}{z-\lambda}\right)\\
 & = & \beta(0)^{-1}e^{i\tau z}\prod_{\tilde{\lambda}\in\tilde{\Lambda}}\left(1-\frac{z}{\tilde{\lambda}}\right),
\end{eqnarray*}
with $\tilde{\Lambda}:=\left(\Lambda\setminus\Lambda_{-\epsilon}^{+}\right)\cup\left(\overline{\Lambda_{-\epsilon}^{+}}-2i\epsilon\right)\subset\mathbb{C}_{-\epsilon}^{-}$.
The function $G$ is outer in $\mathbb{C}_{-\epsilon}^{+}$. As in
(\ref{eq:Bepsilon equiv dN}), we obtain $\left|\beta(x)\right|\asymp d_{N}(x)$.
In particular, we have $|G(x)|^{p}\in(A_{p})$. Let then be $\eta$
such that $\frac{\epsilon}{2}<\eta<\epsilon$. Since $\tilde{\Lambda}$
is the union (not necessarily disjoint) of two $N-$Carleson sequences
in $\mathbb{C}_{-\epsilon}^{-}$, and in particular
\[
\text{Im}\left(\tilde{\lambda}+i\eta\right)\leq\eta-\epsilon<0,\quad\tilde{\lambda}\in\tilde{\Lambda},
\]
(which implies in particular that every real $x$ is far from $\tilde{\Lambda}$),
we obtain that
\[
\left|G(x-i\eta)\right|=e^{\tau\eta}\left|G(x)\right|\left(\prod_{\tilde{\lambda}\in\tilde{\Lambda}}\left|\frac{x-\tilde{\lambda}-i\eta}{x-\tilde{\lambda}}\right|\right)\asymp|G(x)|.
\]

So $x\mapsto|G(x-i\eta)|^{p}$ also satisfies the Muckenhoupt condition
$\left(A_{p}\right)$. According to the Plancherel-Poly\`a inquality,
it is possible to estimate (\ref{inf a estimer}) on the axis $\left\{ \text{Im}(z)=-\eta\right\} $.

By duality arguments (see \cite[p. 576]{SS61} or \cite[p. 94]{Gau11}),
we need to estimate
\[
\sup_{\underset{\left\Vert h\right\Vert _{q}=1}{h\in H^{q}\left(\mathbb{C}_{-\eta}^{+}\right)}}N(h),
\]
with
\begin{eqnarray*}
N(h): & = & \left|\sum_{\lambda\in\Lambda^{+}}\frac{a(\lambda)}{S'(\Lambda)}\int\frac{G(x-i\eta)h(x-i\eta)}{x-i\eta-\lambda}dx\right|\\
 & = & \left|\sum_{\lambda\in\Lambda^{+}}\frac{a(\lambda)}{S'(\lambda)}\mathcal{H}(\tilde{G}\tilde{h})(\lambda+i\eta)\right|
\end{eqnarray*}
where $z\mapsto\tilde{G}(z)=G(z-i\eta)$ is an outer function in $\mathbb{C}^{+}$and
the function $z\mapsto\tilde{h}(z)=h(z-i\eta)$ belongs to $H_{+}^{q}$.
In order to compute $S'(\lambda)$, let us recall that
\[
S(z)=e^{-i\tau z}\beta(z)G(z),\qquad z\in\mathbb{C}_{-\eta}^{+}.
\]
For $\lambda\in\tau_{n}$, $n\in N_{+}$, we have
\[
S'(\lambda)=c_{\lambda}\frac{e^{-i\tau\lambda}}{\lambda-\overline{\lambda}+2i\epsilon}G(\lambda)\frac{\beta}{b_{\lambda}^{\epsilon}}(\lambda),
\]
where $b_{\lambda}^{\epsilon}(z)=c_{\lambda}\frac{z-\lambda}{z-\overline{\lambda}+2i\epsilon}$.
Using that $G(\lambda)=\tilde{G}(\lambda+i\eta)$, and setting 
\[
\psi:=\frac{\mathcal{H}(\tilde{G}\tilde{h})}{\tilde{G}}\text{ and }\alpha(\lambda):=a(\lambda)e^{i\tau\lambda},\quad\lambda\in\Lambda^{+},
\]
 where we recall that $\mathcal{H}$ denotes the Hilbert transform
(see \ref{eq:Hilbert transform def} for definition) the expression
becomes
\[
N(h)=\left|\sum_{n\in N_{+}}\sum_{\lambda\in\tau_{n}}\frac{\alpha(\lambda)\psi(\lambda+i\eta)}{\underset{\mu\not=\lambda}{\prod}b_{\mu}^{\epsilon}(\lambda)}\left(\lambda-\overline{\lambda}+2i\epsilon\right)\right|.
\]
Writing
\[
N_{+}=N_{\epsilon}\cup N_{\infty},\text{ with }N_{\epsilon}:=\left\{ n\in N_{+}:\:\tau_{n}\cap\left\{ \left|\text{Im}(z)\right|<\epsilon\right\} \not=\emptyset\right\} ,
\]
we set, with the help of the functions of Lemma \ref{Interpolation Polynom},
\[
P_{\tau_{n},\alpha}(z):=\sum_{k=1}^{|\tau_{n}|}\Delta_{\tau_{n}}^{k-1}\left(\alpha\left(\lambda_{n}^{(k)}\right)\right)\prod_{l=1}^{k-1}b_{\lambda_{n,l}}(z),\qquad n\in N_{\infty},
\]
\[
P_{\tau_{n},\alpha}(z):=\sum_{k=1}^{|\tau_{n}|}\square_{\tau_{n}}^{k-1}\left(\alpha\left(\lambda_{n}^{(k)}\right)\right)\prod_{l=1}^{k-1}\left(z-\lambda_{n,l}\right),\qquad n\in N_{\epsilon}
\]
and setting $\tilde{\tau}_{n}:=\tau_{n}+i\eta$
\[
Q_{\tilde{\tau}_{n},\psi}(z):=\sum_{k=1}^{|\tilde{\tau}_{n}|}\Delta_{\tilde{\tau}_{n}}^{k-1}\left(\psi(\lambda_{n,1}+i\eta,...,\lambda_{n,k}+i\eta)\right)\prod_{l=1}^{k-1}b_{\lambda_{n,l}+i\eta}(z).
\]
We notice that 
\[
N(h)=\left|\sum_{n\in N_{+}}\sum_{\lambda\in\tau_{n}}\frac{P_{\tau_{n},\alpha}(\lambda)Q_{\tilde{\tau}_{n},\psi}(\lambda+i\eta)}{\underset{\mu\not=\lambda}{\prod}b_{\mu}^{\epsilon}(\lambda)}\left(\lambda-\overline{\lambda}+2i\epsilon\right)\right|.
\]
Recall now that $\tau_{n}\subset R_{n}$, where $\left(R_{n}\right)_{n}$
are the disjoint rectangles (constructed here in the half-plane $\mathbb{C}_{-\eta}^{+}$
so that in particular satisfying $d\left(\partial R_{n},\mathbb{R}-i\eta\right)\asymp l_{n}\asymp L_{n}$)
introduced in Remark \ref{rem:inclusionrectangle}. (Note also that
here we have that $\Lambda_{+}\subset\mathbb{C}_{-\frac{\epsilon}{2}}^{+}$
and in particular, $\Lambda_{+}$ is far from $\mathbb{R}-i\eta$).
Then, if $\Gamma_{n}:=\partial R_{n}$, the function 
\[
z\mapsto h_{n}(z):=\frac{P_{\tau_{n},\alpha}(z)Q_{\tilde{\tau}_{n},\psi}(z+i\eta)}{\beta(z)}
\]
 is a meromorphic function in $\overset{\circ}{R}_{n}$ with simple
poles at $\lambda\in\tau_{n}$. Thus, the residue theorem implies
that 
\[
\int_{\Gamma_{n}}h_{n}(z)dz=2i\pi\sum_{\lambda\in\tau_{n}}\text{Res}(h_{n},\lambda)
\]
and 
\[
\text{Res}(h_{n},\lambda)=P_{\tau_{n},\alpha}(\lambda)Q_{\tilde{\tau}_{n},\psi}(\lambda+i\eta)\left(\frac{\beta}{b_{\lambda}^{\epsilon}}(\lambda)\right)^{-1}\cdot\left(\lambda-\overline{\lambda}+2i\epsilon\right).
\]
It follows that 
\[
N(h)=\left|\frac{1}{2i\pi}\sum_{n\in N_{+}}\int_{\Gamma_{n}}\frac{P_{\tau_{n},\alpha}\left(z\right)Q_{\tilde{\tau}_{n},\psi}\left(z+i\eta\right)}{\beta}dz\right|.
\]
Obviously $\left|b_{\lambda_{n,l}}(z)\right|\le1$. Observe also that
by condition (\ref{rect4}) of Remark \ref{rem:inclusionrectangle}
for $z\in\Gamma_{n}$, $n\in N_{\epsilon}$, we have that $\left|z-\lambda_{n,l}\right|$
is bounded by a fixed constant. Hence for every $n\in N_{+}$, 
\[
\left|P_{\tau_{n},\alpha}\right|\lesssim\sum_{k=1}^{|\tau_{n}|}\left|\tilde{\Delta}_{\tau_{n}}^{k-1}\left(\alpha\left(\lambda_{n}^{(k)}\right)\right)\right|.
\]
Also
\[
\left|Q_{\tilde{\tau}_{n},\psi}\right|\lesssim\sum_{k=1}^{|\tilde{\tau}_{n}|}\left|\Delta_{\tilde{\tau}_{n}}^{k-1}\left(\psi(\lambda_{n,1}+i\eta,...,\lambda_{n,k}+i\eta)\right)\right|,
\]
and we obtain that 
\[
N(h)\lesssim\sum_{n\in N_{+}}\left[\left(\int_{\Gamma_{n}}\left|\frac{dz}{\beta(z)}\right|\right)\left(\sum_{k=1}^{|\tau_{n}|}\left|\tilde{\Delta}_{\tau_{n}}^{k-1}\left(\alpha\right)\right|\right)\left(\sum_{l=1}^{|\tilde{\tau}_{n}|}\left|\Delta_{\tilde{\tau}_{n}}^{l-1}\left(\psi\right)\right|\right)\right].
\]
For $z\in\Gamma_{n}$, we see that
\begin{eqnarray*}
\left|\beta(z)\right| & = & \left(\prod_{\lambda\in\Lambda_{+}\setminus\tau_{n}}\left|\frac{z-\lambda}{z-\overline{\lambda}+2i\epsilon}\right|\right)\cdot\left(\prod_{\lambda\in\tau_{n}}\left|\frac{z-\lambda}{z-\overline{\lambda}+2i\epsilon}\right|\right)\\
 & =: & \Pi_{1}(z)\cdot\Pi_{2}(z).
\end{eqnarray*}
Since $\Lambda_{+}$ is $N-$Carleson in $\mathbb{C}_{-\epsilon}^{+}$,
it follows from the fact that $R_{n}$ is {}``far'' from $\tau_{k}$,
$k\not=n$ that 
\[
\Pi_{1}(z)\asymp1
\]
and from the fact that $R_{n}$ is {}``far'' from $\tau_{n}$ that
\[
\Pi_{2}(z)\asymp1.
\]
Hence, choosing arbitrarily $\lambda_{n,0}\in\tau_{n}$, the construction
of $R_{n}$ gives 
\[
\int_{\Gamma_{n}}\left|\frac{dz}{\beta(z)}\right|\lesssim\int_{\Gamma_{n}}\left|dz\right|\lesssim\text{Im}(\lambda_{n,0})+\eta\lesssim1+\left|\text{Im}\left(\lambda_{n,0}\right)\right|.
\]
Applying H\"older's inequality, we obtain
\begin{eqnarray*}
N(h) & \lesssim & \left(\sum_{n\in N_{+}}\left(1+\text{Im}(\lambda_{n,0})\right)\sum_{k=1}^{|\tau_{n}|}\left|\tilde{\Delta}_{\tau_{n}}^{k-1}\left(e^{i\tau\cdot}a\right)\right|^{p}\right)^{\frac{1}{p}}\\
 &  & \times\left(\sum_{n\in N_{+}}\text{Im}(\lambda_{n,0}+i\eta)\sum_{k=1}^{|\tilde{\tau}_{n}|}\left|\Delta_{\tilde{\tau}_{n}}^{k-1}\left(\psi\right)\right|^{q}\right)^{\frac{1}{q}}.
\end{eqnarray*}
Now, notice that by the Muckenhoupt condition on $|\tilde{G}|^{-q}$
and thus the boundedness of $\mathcal{H}$ on
\[
H_{+}^{q}\left(\left|\frac{1}{\tilde{G}}\right|^{q}\right):=\left\{ f\in\mathcal{N}^{+}:\: f_{|\mathbb{R}}\in L^{q}\left(\left|\frac{1}{\tilde{G}}\right|^{q}\right)\right\} ,
\]
($\mathcal{N}^{+}$ denotes the Smirnov class) we get that $\psi\in H_{+}^{q}$
and $\left\Vert \psi\right\Vert _{q}\lesssim\left\Vert \tilde{h}\right\Vert _{H_{+}^{q}}=1$.
But, since 
\[
\bigcup_{n\in N_{+}}\tilde{\tau}_{n}=\Lambda^{+}+i\eta
\]
 is in fact $N-$Carleson in $\mathbb{C}_{\eta-\frac{\epsilon}{2}}^{+}\subset\mathbb{C}^{+}$
and $\psi\in H_{+}^{q}$, Theorem \ref{thm Hartmann} implies that
\[
\left(\sum_{n\in N_{+}}\text{Im}(\lambda_{n,0}+i\eta)\sum_{k=1}^{|\tilde{\tau}_{n}|}\left|\Delta_{\tilde{\tau}_{n}}^{k-1}\left(\psi\right)\right|^{q}\right)^{\frac{1}{q}}\lesssim\left\Vert \psi\right\Vert _{H_{+}^{q}}\lesssim\left\Vert \tilde{h}\right\Vert _{H_{+}^{q}}=1.
\]
Finally, we obtain
\[
N(h)\lesssim\left(\sum_{n\in N_{+}}\left(1+\text{Im}(\lambda_{n,0})\right)\sum_{k=1}^{|\tau_{n}|}\left|\tilde{\Delta}_{\tau_{n}}^{k-1}\left(e^{i\tau\cdot}a\right)\right|^{p}\right)^{\frac{1}{p}}=\left\Vert a\right\Vert _{X_{\tau,\epsilon}^{p}(\Lambda)},
\]
which ends the proof.

\section{\label{sec:About-the-Carleson}About the $N-$Carleson condition}

It is clear that the definition of $X_{\tau,\epsilon}^{p}\left(\Lambda\right)$
depends on the $N-$Carleson hypothesis, and more precisely for the
construction of the groups $\tau_{n}$. In this last section, we show
that in a certain way, the $N-$Carleson condition is necessary. 

It will be convenient to introduce the distance function 
\[
\delta(z,\xi):=\frac{\left|z-\xi\right|}{1+\left|z-\overline{\xi}\right|},\qquad z,\xi\in\mathbb{C},
\]
which expresses that locally we deal with Euclidian geometry close
to the real axis and pseudohyperbolic geometry far away from the real
axis (see $e.g.$ \cite[page 715]{Se98}). Let $\Lambda=\left\{ \lambda_{n}\right\} _{n\geq1}$
be a sequence of complex numbers. Let $N\geq1$ be an integer and
$\eta\in\left(0,\frac{1}{2}\right)$. For $\lambda\in\Lambda$, we
define 
\[
D_{\lambda,\eta}:=\left\{ z\in\mathbb{C}:\;\delta(\lambda,z)<\eta\right\} ,
\]
\[
N_{\lambda}:=\left\{ \mu_{\lambda,i}:\;1\leq i\leq N\right\} \subset\Lambda
\]
 as the set of $N$ closest neighboors of $\lambda$ (including in
particular$\lambda$) with respect to the distance $\delta$. Then
we set
\[
\sigma_{\lambda}:=D_{\lambda,\eta}\cap N_{\lambda},\qquad n_{\lambda}:=\left|\sigma_{\lambda}\right|\leq N.
\]
Note that the set $N_{\lambda}$, and consequently $\sigma_{\lambda}$,
is not unique. It is now natural to introduce the space (for $1<p<\infty$)
\[
X_{\tau}^{p}(\Lambda,N):=\left\{ a=\left(a(\lambda)\right)_{\lambda\in\Lambda}:\:\left\Vert a\right\Vert _{X_{\tau}^{p}(\Lambda,N)}<\infty\right\} ,
\]
where
\[
\left\Vert a\right\Vert _{X_{\tau}^{p}(\Lambda,N)}^{p}:=\sum_{\lambda\in\Lambda}\left(1+\left|\text{Im}(\lambda)\right|\right)\sum_{k=1}^{n_{\lambda}}\left|\tilde{\Delta}_{\sigma_{\lambda}}^{k-1}\left(ae^{\pm i\tau\cdot}\left(\mu^{(k)}\right)\right)\right|^{p}
\]
with 
\[
\tilde{\Delta}_{\sigma_{\lambda}}=\begin{cases}
\Delta_{\sigma_{\lambda}}, & \text{ if }\sigma_{\lambda}\cap\left\{ z\in\mathbb{C}:\:\left|\text{Im}(z)\right|<1\right\} =\emptyset\\
\square_{\sigma_{\lambda}}, & \text{ if not}
\end{cases}
\]
and
\[
e^{\pm i\tau\mu}=\left\{ \begin{array}{cl}
e^{i\tau\mu} & \text{, if }\mu\in\sigma_{\lambda}\text{and }\sigma_{\lambda}\cap\left\{ z\in\mathbb{C}:\text{Im}\left(z\right)\geq0\right\} \neq\emptyset\\
e^{-i\tau\mu} & \text{, otherwise}
\end{array}\right..
\]

\begin{rem}
\label{rem:espace independant de la partition}It can be shown that
if $\Lambda\cap\mathbb{C}_{a}^{\pm}$ is $N-$Carleson in the corresponding
half-plane, for each $a\in\mathbb{R}$, then this norm is equivalent
to the previously norm $\left\Vert \cdot\right\Vert _{X_{\tau,\epsilon}^{p}(\Lambda)}$
(for every $\epsilon>0$) defined in the above section. For the proof,
we refer to \cite[pp. 36-38]{Ha96a}. 
\end{rem}
The result is the following one.
\begin{thm}
\label{Thm : necessity}If $R_{\Lambda}$ is an isomorphism from $PW_{\tau}^{p}$
onto $X_{\tau}^{p}(\Lambda,N)$, then for every $a\in\mathbb{R}$,
$\Lambda\cap\mathbb{C}_{a}^{\pm}$ is $N'-$Carleson in the corresponding
half-plane, with $N'\leq N$.
\end{thm}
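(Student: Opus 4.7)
The plan is to argue by contradiction, constructing test sequences that concentrate on overly tight clusters. By the translation invariance of $PW_{\tau}^{p}$ (Plancherel--P\'olya, Proposition~\ref{plancherel-polya}), it suffices to handle $a=0$; by conjugation symmetry, treat the upper half-plane $\mathbb{C}^{+}$. Suppose $\Lambda^{+}:=\Lambda\cap\mathbb{C}^{+}$ is \emph{not} $N'$-Carleson for any $N'\le N$. By the half-plane version of Proposition~\ref{Ncarleson-CG} (equivalently, Vasyunin's characterization read contrapositively, using the argument sketched for Proposition~3.3 in the opposite direction), one can then find, for every $\eta>0$, a pseudohyperbolically connected cluster of at least $N+1$ points of $\Lambda^{+}$ inside a ball of radius $\eta$. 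Extract $(N+1)$-tuples $\{\mu_{0}^{k},\dots,\mu_{N}^{k}\}\subset\Lambda^{+}$ with $\rho$-diameter $\eta_{k}\to 0$, and (passing to a subsequence) arrange their heights to accumulate at some $h\in(0,\infty]$.

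For each $k$, I would build a test sequence $a_{k}\in\mathbb{C}^{\Lambda}$ supported on the $k$-th cluster by setting $a_{k}(\mu_{i}^{k}):=p_{k}(\mu_{i}^{k})$, where $p_{k}(z):=\prod_{j=0}^{N-1}(z-\mu_{j}^{k})$ is the monic polynomial of degree $N$ vanishing on all cluster points except $\mu_{N}^{k}$. A direct computation shows that on each of the $N$-subsets of the cluster (the only ones entering $\|a_{k}\|_{X_{\tau}^{p}(\Lambda,N)}$, since for $\lambda$ in the cluster each $\sigma_{\lambda}$ is formed by $N$ of the $N+1$ cluster points), either every value is zero, or the top Euclidean divided difference $\square^{N-1}$ reduces to $\mu_{N}^{k}-\mu_{l}^{k}=O(\eta_{k}\cdot\max(1,h))$, and lower-order $\square^{k-1}$ are $O(\eta_{k}^{N-k+1})$. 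Summing these contributions (with the harmless weight $1+|\mathrm{Im}\,\lambda|$ bounded on the cluster) yields
\[
\|a_{k}\|_{X_{\tau}^{p}(\Lambda,N)}\;\lesssim\;\eta_{k}\;\longrightarrow\;0.
\]
Under the isomorphism hypothesis, surjectivity produces $f_{k}\in PW_{\tau}^{p}$ with $f_{k}\vert_{\Lambda}=a_{k}$ and $\|f_{k}\|_{p}\lesssim\|a_{k}\|_{X_{\tau}^{p}(\Lambda,N)}\to 0$. By the mean-value form of the Euclidean divided difference, $\square^{N}f_{k}(\mu_{0}^{k},\dots,\mu_{N}^{k})=f_{k}^{(N)}(\xi_{k})/N!$ for some $\xi_{k}$ in the convex hull of the cluster; but since $f_{k}$ interpolates $p_{k}$ on the cluster, $\square^{N}f_{k}=\square^{N}p_{k}=1$. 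Combined with the pointwise estimate (\ref{Pointwise PW}) and Bernstein's inequality, which give $|f_{k}^{(N)}(z)|\lesssim\|f_{k}\|_{p}$ uniformly for $z$ in a compact neighbourhood of bounded height, this forces $1=|\square^{N}f_{k}|\lesssim\|f_{k}\|_{p}\to 0$, the desired contradiction.

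The chief technical obstacle is handling clusters whose heights are either unbounded or shrink to $0$. The case of near-real clusters is automatically accommodated by the definition of $\tilde\Delta$, which selects Euclidean divided differences precisely in the Euclidean regime, so the argument above applies verbatim. For $\mathrm{Im}\,\mu_{i}^{k}\to\infty$ one must instead work pseudohyperbolically: replace $p_{k}$ by a Blaschke-type analogue $\prod_{j<N}b_{\mu_{j}^{k}}(z)$, use pseudohyperbolic divided differences, and invoke the Hardy-space pointwise bound $|g^{(N)}(z)|\lesssim\|g\|_{p}\,|\mathrm{Im}\,z|^{-N-1/p}$ applied to $g=e^{i\tau\cdot}f_{k}\in H_{+}^{p}$. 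Modulo this case distinction, the quantitative decay $\|a_{k}\|_{X_{\tau}^{p}(\Lambda,N)}\to 0$ is robust, since only scale-invariant ratios of divided-difference magnitudes to cluster diameter enter the computation, and Bernstein/Cauchy-type bounds on $f_{k}^{(N)}$ are scale-invariant in the appropriate metric. Once the contradiction is reached for every choice of $a$ and of sign, we conclude that $\Lambda\cap\mathbb{C}_{a}^{\pm}$ must be $N'$-Carleson for some $N'\le N$, completing the proof.
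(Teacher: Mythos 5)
There is a genuine gap at the very first step, and it is precisely the half of the argument that the paper spends the most effort on. You pass from ``$\Lambda^{+}$ is not $N'$-Carleson for any $N'\le N$'' to ``for every $\eta>0$ there is a pseudohyperbolically connected cluster of at least $N+1$ points of diameter $<\eta$.'' This implication is false in general: a sequence can fail to be $M$-Carleson for \emph{every} $M$ while being uniformly pseudohyperbolically separated (take $\lambda_{j,k}=k2^{-j}+i2^{-j}$, $0\le k<2^{j}$; the measure $\sum\mathrm{Im}(\lambda)\delta_{\lambda}$ gives infinite mass to a fixed Carleson window, yet no two points ever come pseudohyperbolically close). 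The contrapositive of Proposition 3.4 only gives you: tight $(N+1)$-clusters imply not $N$-Carleson; you need the converse, which holds only once you already know the sequence is a finite union of Carleson sequences. This is why the paper's proof has a first part that you omit entirely: it uses only the \emph{boundedness} of $R_{\Lambda}$, the injection of $X_{\tau}^{p}(\tilde\Lambda,N)$ into a weighted $\ell^{p}$ space via the interpolating functions of Lemma \ref{Interpolation Polynom}, and the Treil--Volberg/Aleksandrov embedding theorem for $K_{I^{\tau}}^{p}$ to conclude that $\mu_{\tilde\Lambda_{a}^{+}}$ is a Carleson measure, hence that $\Lambda_{a}^{\pm}$ is $N'$-Carleson for \emph{some finite} $N'$. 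Only then does the shrinking-cluster argument make sense.

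The missing first part also undermines your norm estimate $\|a_{k}\|_{X_{\tau}^{p}(\Lambda,N)}\lesssim\eta_{k}$. The norm sums over \emph{all} $\lambda\in\Lambda$ whose neighbourhood $\sigma_{\lambda}$ meets the support of $a_{k}$, not just over the $N+1$ cluster points; without the a priori Carleson structure the number of such $\lambda$ (the set the paper calls $M_{j}$) need not be uniformly bounded, and the sum can blow up. In the paper this bound $\sup_{j}|M_{j}|<\infty$ is exactly where the already-established $(N+k+1)$-Carleson property is invoked, and the same property (via Hartmann's Theorem \ref{thm Hartmann}) is what controls the top-order divided difference of the interpolating function $f^{j}$ and produces the contradiction. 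Your second half is otherwise in the right spirit --- it is a variant of the paper's Lemma 6.4, with a Bernstein/Cauchy estimate replacing Hartmann's trace theorem and a different normalization of the test data --- and the descent from $N+k+1$ to $N+k$ matches the paper's Corollary. But as written the proof cannot start: you must first prove, from boundedness alone, that the sequence is a finite union of Carleson sequences.
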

The proof is in two parts. We begin by showing that if $R_{\Lambda}$
is such an isomorphism, then $\Lambda_{a}^{\pm}$ is $N'-$Carleson
for some $N'\in\mathbb{N}$. This only requires the boundedness of
$R_{\Lambda}$. We first notice that by the Plancherel-Poly\`a theorem
(Proposition \ref{plancherel-polya}) the map
\[
\begin{array}{cccc}
\tau_{a}: & PW_{\tau}^{p} & \to & PW_{\tau}^{p}\\
 & f & \mapsto & f(\cdot+i\left(1+|a|\right)
\end{array}
\]
is an isomorphism and so $\tilde{R}_{\Lambda}:=R_{\Lambda}\circ\tau_{a}$
is still an isomorphism. Obviously, $\tilde{R}_{\Lambda}=R_{\tilde{\Lambda}}$,
where 
\[
\tilde{\Lambda}:=\Lambda+i\left(1+|a|\right).
\]
Note that for $\lambda\in\Lambda_{a}^{+}$, with the notations of
Lemma \ref{Interpolation Polynom}, 
\[
\left|a_{\lambda}\right|^{p}e^{-p\text{Im}(\lambda)}=\left|P_{\sigma_{\lambda},e^{\pm i\tau\cdot}a}\right|\le\sum_{k=1}^{n_{\lambda}}\left|\tilde{\Delta}_{\sigma_{\lambda}}^{k-1}\left(ae^{\pm i\tau\cdot}\left(\mu^{(k)}\right)\right)\right|^{p}
\]
and so $X_{\tau}^{p}\left(\tilde{\Lambda},N\right)$ injects into
$l^{p}\left(\left(1+\left|\text{Im}(\tilde{\lambda})\right|\right)e^{-p\left|\text{Im}(\tilde{\lambda})\right|}\right)$
so that 
\[
R_{\tilde{\Lambda}}:PW_{\tau}^{p}\to l^{p}\left(\left(1+\left|\text{Im}(\tilde{\lambda})\right|\right)e^{-p\left|\text{Im}(\tilde{\lambda})\right|}\right)
\]
is bounded. We set $\tilde{\Lambda}_{a}^{+}:=\Lambda_{a}^{+}+i\left(1+\left|a\right|\right)$
and reintroduce the inner function $I_{\tau}(z)=\exp\left(2i\tau z\right)$.
We have mentioned in the beginning of the paper that $PW_{\tau}^{p}$
is isomorphic to $K_{I^{\tau}}^{p}$, so

\[ R_{\tilde{\Lambda}_{a}^{+}}^{I_{\tau}}:= R_{\tilde{\Lambda}_{a}^{+}}\Big| K_{I_{\tau}}^{p}: K_{I_{\tau}}^{p}\to L^{p}\left(\mu_{\tilde{\Lambda}_{a}^{+}}\right) \]is
bounded, where
\[
\mu_{\tilde{\Lambda}_{a}^{+}}:=\sum_{\tilde{\lambda}\in\tilde{\Lambda}_{a}^{+}}\text{Im}(\tilde{\lambda})\delta_{\tilde{\lambda}}.
\]
In order to show that $\Lambda_{a}^{\pm}$ is $N'-$Carleson, it is
sufficient to show that $\mu_{\tilde{\Lambda}_{a}^{+}}$ is a Carleson
measure for $H_{+}^{p}$. Since in particular $\tilde{\Lambda}_{a}^{+}\subset\mathbb{C}_{1}^{+}$,
it is possible to find $\epsilon\in\left(0,1\right)$ such that
\[
\tilde{\Lambda}_{a}^{+}\subset L\left(I^{\tau},\epsilon\right):=\left\{ z\in\mathbb{C}^{+}:\:\left|I^{\tau}(z)\right|<\epsilon\right\} .
\]
Now, from a result of Treil and Volberg (see \cite{TV95} or \cite{Al97}),
the boundedness of $R_{\tilde{\Lambda}_{a}^{+}}^{I^{\tau}}$ implies
that
\begin{equation}
\sup_{I}\frac{\mu_{\tilde{\Lambda}_{a}^{+}}\left(\omega_{I}\right)}{m(I)}<\infty,\label{sup carleson window}
\end{equation}
where the supremum is taken over all the intervals of finite length
such that the Carleson window $\omega_{I}$ constructed on $I$ statisfies
\[
\omega_{I}\cap L(I^{\tau},\epsilon)\not=\emptyset.
\]
Observe that $L\left(I^{\tau},\epsilon\right)$ is in the upper half
plane $\mathbb{C}_{b}^{+}$, $b=\log\left(1/\epsilon\right)$, so
that if the length of the Carleson window is less than $b$, then
we have $\omega_{I}\cap L(I^{\tau},\epsilon)=\emptyset$. Hence, $\omega_{I}\cap\tilde{\Lambda}_{a}^{+}=\emptyset$
and so $\mu_{\tilde{\Lambda}_{a}^{+}}\left(\omega_{I}\right)=0$.
It follows that (\ref{sup carleson window}) is true for all finite
length intervals $I$, which is equivalent to the fact that $\mu_{\tilde{\Lambda}_{a}^{+}}$
is a Carleson measure or also that $\tilde{\Lambda}_{a}^{+}$ is $N'-$Carleson
and hence $\Lambda_{a}^{+}$ in the corresponding half-plane. Considering
the map
\[
\begin{array}{cccc}
s: & PW_{\tau}^{p} & \to & PW_{\tau}^{p}\\
 & f & \mapsto & f\left(-\cdot\right)
\end{array}
\]
which is also an isomorphism, we will also have the result for $\Lambda_{a}^{-}$.

Now, we want to prove that $N'\leq N$. In the following, if $\Lambda_{a}^{+}$
is $\left(N+k\right)-$Carleson, we write 
\[
\Lambda_{a}^{+}=\bigcup_{n\geq1}\tau_{n}^{k},
\]
where the groups $\tau_{n}^{k}$ come from the Generalized Carleson
condition, and so it is possible to assume that
\[
\text{diam}_{\delta}\left(\tau_{n}^{k}\right)<\frac{\eta}{4}
\]
(which in particular implies that $\tau_{n}^{k}\subset D_{\lambda,\eta}$)
and
\[
\gamma:=\inf_{n\not=m}\delta\left(\tau_{n}^{k},\tau_{m}^{k}\right)>0.
\]
We need the following lemma and its corollary. For technical reasons,
let us assume (without loss of generality) that $\Lambda_{a}^{+}\subset\mathbb{C}_{1}^{+}$
so that we can deal with the pseudohyperbolic metric and the corresponding
divided differences. 
\begin{lem}
If $R_{\Lambda}$ is an isomorphism from $PW_{\tau}^{p}$ onto $X_{\tau}^{p}(\Lambda,N)$
and $\Lambda_{a}^{+}$ is $\left(N+k+1\right)-$Carleson, $k\ge0$,
then it is possible to find $\vartheta>0$ such that every $\tau_{n}^{k+1}$
with $\left|\tau_{n}^{k+1}\right|=N+k+1$ satifies $\text{diam}_{\rho}\left(\tau_{n}^{k+1}\right)>\vartheta$.\end{lem}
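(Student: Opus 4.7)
The plan is to argue by contradiction. Suppose there exists a subsequence of groups $\tau_{n_j}^{k+1}=\{\mu_1^{(j)},\dots,\mu_{N+k+1}^{(j)}\}$ of the maximal size $N+k+1$ with $\vartheta_j:=\mathrm{diam}_\rho(\tau_{n_j}^{k+1})\to 0$. The strategy exploits the mismatch between the order $(N-1)$ divided differences that enter the norm of $X_\tau^p(\Lambda,N)$ (computed on $N$-point nearest-neighbor subsets $\sigma_\lambda$) and the top-order $(N+k)$-th divided difference over the whole group of size $N+k+1>N$. One builds test data $a_j\in X_\tau^p(\Lambda,N)$ supported on $\tau_{n_j}^{k+1}$ whose $X$-norm tends to zero but whose $PW_\tau^p$-interpolant $f_j:=R_\Lambda^{-1}a_j$ has norm bounded below, contradicting the isomorphism.

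Concretely, I would set $a_j(\mu_l^{(j)}):=\prod_{i\neq l}b_{\mu_i^{(j)}}(\mu_l^{(j)})$ (with the Euclidean analogue near $\mathbb{R}$) and $a_j(\lambda)=0$ elsewhere. The symmetric formula for divided differences then gives $\Delta^{N+k}_{\tau_{n_j}^{k+1}}(a_j)=c$ for a fixed nonzero constant $c$ independent of $j$. On the other hand $|a_j(\mu_l^{(j)})|\lesssim\vartheta_j^{N+k}$, so every divided difference of order at most $N-1$ over any $N$-subset of $\tau_{n_j}^{k+1}$ is bounded by $C\vartheta_j^{N+k}/\vartheta_j^{N-1}=C\vartheta_j^{k+1}\to 0$. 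A similar estimate handles the terms arising from $\lambda\notin\tau_{n_j}^{k+1}$ whose nearest-neighbor set $\sigma_\lambda$ happens to reach into $\tau_{n_j}^{k+1}$: the corresponding denominators are bounded below by powers of $\gamma>0$, so these contributions are also $O(\vartheta_j^{N+k})$. Hence $\|a_j\|_{X_\tau^p(\Lambda,N)}\to 0$.

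For the lower bound on $\|f_j\|_p$, $f_j$ interpolates $a_j$ on $\tau_{n_j}^{k+1}$, so $\Delta^{N+k}_{\tau_{n_j}^{k+1}}(f_j)=c$ as well. Representing the divided difference as an integral average of $f_j^{(N+k)}$ over the convex hull of $\tau_{n_j}^{k+1}$ and combining the $L^p$-Bernstein inequality $\|f^{(m)}\|_p\leq\tau^m\|f\|_p$ with the pointwise bound (\ref{Pointwise PW}) which follows from Proposition \ref{plancherel-polya}, one gets $|c|\leq C_{p,\tau}\|f_j\|_p$, hence $\|f_j\|_p\gtrsim 1$. This contradicts $\|f_j\|_p\asymp\|a_j\|_{X_\tau^p(\Lambda,N)}\to 0$.

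The main obstacle is keeping the constant $C_{p,\tau}$ uniform as the clusters drift in $\mathbb{C}_1^+$. If imaginary parts are unbounded, (\ref{Pointwise PW}) contributes an $e^{\tau|\mathrm{Im}|}$ factor which must be matched against the $(1+|\mathrm{Im}(\lambda_{n,0})|)$ weight and the $e^{\pm i\tau\cdot}$ normalization appearing in the definition of $X_\tau^p(\Lambda,N)$. These factors balance provided the estimate is phrased in terms of the normalized quantities $e^{i\tau\lambda}a_j(\lambda)$; passing to a subsequence that brings all clusters into a common horizontal strip (using translation invariance of $PW_\tau^p$ via Proposition \ref{plancherel-polya}) then gives the required uniform estimate.
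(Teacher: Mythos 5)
Your overall strategy (contradiction via a subsequence of shrinking maximal clusters, exploiting the mismatch between the order-$(N-1)$ divided differences entering $\left\Vert \cdot\right\Vert _{X_{\tau}^{p}(\Lambda,N)}$ and the top-order divided difference over the full $(N+k+1)$-point group) is the same as the paper's. But the step on which everything hinges -- a \emph{uniform} upper bound $\left|\Delta_{\tau_{n_{j}}^{k+1}}^{N+k}\left(f_{j}\right)\right|\lesssim\left\Vert f_{j}\right\Vert _{p}$ -- is not established by your argument, and this is a genuine gap. The Hermite--Genocchi integral representation applies to Euclidean divided differences; converting to the pseudohyperbolic ones costs a factor of order $\left(2\,\text{Im}\,\mu\right)^{N+k}$, and the pointwise bound (\ref{Pointwise PW}) on $f_{j}^{(N+k)}$ at height $h_{j}$ carries a factor $e^{\tau h_{j}}$. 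If the clusters drift upward ($h_{j}\to\infty$), your constant $C_{p,\tau}$ blows up. The remedy you propose -- translating the clusters into a common horizontal strip ``by translation invariance'' -- does not work: by Proposition \ref{plancherel-polya} only \emph{real} translations are isometric, while vertical translation by $ia$ has operator norm $e^{\tau|a|}$ on $PW_{\tau}^{p}$, so it cannot be applied uniformly to clusters at unbounded heights. The paper sidesteps this entirely by invoking Theorem \ref{thm Hartmann} for the $(N+k+1)$-Carleson sequence: boundedness of $R_{\Lambda_{a}^{+}}$ from $H_{+}^{p}$ into the divided-difference space gives $\text{Im}(\lambda)\left|\Delta_{\tilde{\tau}_{j}}^{N+k}\left(e^{i\tau\cdot}f^{j}\right)\right|^{p}\lesssim\left\Vert f^{j}\right\Vert ^{p}$ with the correct weight and $e^{i\tau\cdot}$ normalization built in; the test data is then normalized (supported on a \emph{single} point of the cluster, with factors $e^{\tau\text{Im}}\text{Im}^{-1/p}$ and division by $\max_{i}\left|b_{\lambda_{i}^{j}}(\lambda_{N+k}^{j})\right|$) so that its $X$-norm stays bounded while the weighted top-order divided difference tends to infinity. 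You should replace your Bernstein argument by an appeal to Theorem \ref{thm Hartmann}.

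A secondary point: your bound ``$C\vartheta_{j}^{N+k}/\vartheta_{j}^{N-1}$'' for the order-$(N-1)$ divided differences is written with the inequality going the wrong way -- the denominators $\prod\left|b_{\mu_{i}}(\mu_{l})\right|$ are bounded \emph{above}, not below, by $\vartheta_{j}^{N-1}$, since points inside a cluster may be far closer to each other than the diameter. The estimate survives only because of the exact cancellation $\prod_{i\neq l}b_{\mu_{i}}(\mu_{l})\big/\prod_{i\in\omega}b_{\mu_{i}}(\mu_{l})=\prod_{i\notin\omega,\, i\neq l}b_{\mu_{i}}(\mu_{l})$, leaving at least $k+1$ factors each at most $\vartheta_{j}$; you should make that cancellation explicit rather than argue numerator against denominator separately.
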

\begin{proof}
Let us suppose to the contrary that we can find a subsequence $(\tilde{\tau}_{j})$
of $(\tau_{n}^{k+1})$ such that $|\tilde{\tau}_{j}|=N+k+1$ and $\text{diam}_{\rho}(\tilde{\tau}_{j})\to0$,
$j\to\infty$. We set $\tilde{\tau}_{j}=\{\lambda_{i}^{j}:\; i=0,..,N+k\}$.
Let us now introduce the sequence $a^{j}=(a^{j}(\lambda))_{\lambda\in\Lambda}$
defined by
\[
a^{j}(\lambda):=0,\;\lambda\not\not=\lambda_{N+k}^{j},
\]
and
\[
a^{j}(\lambda_{N+k}^{j}):=e^{\tau\text{Im}\left(\lambda_{N+k}^{j}\right)}\text{Im}\left(\lambda_{N+k}^{j}\right)^{-\frac{1}{p}}\frac{\underset{i\not=N+k}{\prod}\left|b_{\lambda_{i}^{j}}\left(\lambda_{N+k}^{j}\right)\right|}{\underset{i\not=N+k}{\max}\left|b_{\lambda_{i}^{j}}\left(\lambda_{N+k}^{j}\right)\right|}.
\]
Let 
\[
M_{j}:=\left\{ \lambda\in\Lambda_{a}^{+}:\:\lambda_{N+k}^{j}\in\sigma_{\lambda}\right\} 
\]
the set of the points of $\Lambda_{a}^{+}$ close to $\lambda_{N+k}^{j}$.
Since $\text{diam}_{\rho}\left(\tilde{\tau}_{j}\right)<\frac{\eta}{4}$
and $\lambda_{N+k}^{j}\in\tilde{\tau}_{j}$ we have for every $\lambda\in\tilde{\tau}_{j}$
that $\lambda_{N+k}^{j}\in\sigma_{\lambda}$, $i.e.$ $\tilde{\tau}_{j}\subset M_{j}$.
So, let $B_{j}:=M_{j}\setminus\tilde{\tau}_{j}$. Also, since $\Lambda_{a}^{+}$
is $\left(N+k+1\right)-$Carleson, 
\[
\sup_{j}\left|\Lambda_{a}^{+}\cap D_{\lambda_{N+k}^{j}}\right|<\infty,
\]
which implies that 
\[
\sup_{j}\left|M_{j}\right|<\infty.
\]
By construction, 
\[
\left\Vert a^{j}\right\Vert _{X_{\tau}^{p}(\Lambda,N)}^{p}=\sum_{\lambda\in M_{j}}\left(1+\text{Im}(\lambda)\right)\sum_{l=1}^{n_{\lambda}}\left|\Delta_{\sigma_{\lambda}}^{l-1}\left(a^{j}e^{i\tau\cdot}\left(\mu^{(l)}\right)\right)\right|^{p}.
\]
(Observe that we only consider in the sum the points containing $\lambda_{N+k}^{j}$
in their neighborhood.) 

We have to evaluate this expression. Take $\lambda\in M_{j}$. We
recall that $n_{\lambda}=\left|\sigma_{\lambda}\right|$. Note also
that for every $1\le l\le n_{\lambda}$, the divided difference 
\[
\left|\Delta_{\sigma_{\lambda}}^{l-1}\left(a^{j}e^{\pm i\tau\cdot}\left(\lambda^{(l)}\right)\right)\right|
\]
will be equal either to $0$ or to 
\[
\left|a^{j}\left(\lambda_{N+k}^{j}\right)e^{\pm i\tau\lambda_{N+k}^{j}}\prod_{m\in\omega_{l}}b_{\lambda_{m}^{j}}\left(\lambda_{N+k}^{j}\right)\right|,
\]
where $\omega_{l}\subset\sigma_{\lambda}$ contains $l-1$ points.
Now, $\omega_{l}= \omega_{l,1}\cup\omega_{l,2}$ where $\omega_{l,1} =\sigma_{\lambda}\cap \tilde{\tau}_j$
and $\omega_{l,2}$ are the other points. Note that $\omega_l$ cannot
contain $\lambda^j_{N+k}$. By assumption, for $\mu\in\omega_{l,2}$,
$|b_{\mu}(\lambda^j_{N+k})|\ge \gamma$. Hence,

\begin{eqnarray*}
\sum_{l=1}^{n_{\lambda}}\left|\Delta_{\sigma_{\lambda}}^{l-1}\left(a^{j}e^{\pm i\tau\cdot}\left(\lambda^{(l)}\right)\right)\right|^{p}\\
 &  & \negthickspace\negthickspace\negthickspace\negthickspace\negthickspace\negthickspace\negthickspace\negthickspace\negthickspace\negthickspace\negthickspace\negthickspace\negthickspace\negthickspace\negthickspace\negthickspace\negthickspace\negthickspace\negthickspace\negthickspace\negthickspace\negthickspace\negthickspace\leq\sum_{l=1}^{n_{\lambda}}\frac{\underset{i\neq N+k}{\prod}\left|b_{\lambda_{i}^{j}}\left(\lambda_{N+k}^{j}\right)\right|^{p}}{\underset{i\neq N+k}{\text{max}}\left|b_{\lambda_{i}^{j}}\left(\lambda_{N+k}^{j}\right)\right|^{p}}\cdot\frac{1}{\text{Im}\left(\lambda_{N+k}^{j}\right)\underset{\mu\in\omega_{l}}{\prod}\left|b_{\mu}\left(\lambda_{N+k}^{j}\right)\right|^{p}}\\
 &  & \negthickspace\negthickspace\negthickspace\negthickspace\negthickspace\negthickspace\negthickspace\negthickspace\negthickspace\negthickspace\negthickspace\negthickspace\negthickspace\negthickspace\negthickspace\negthickspace\negthickspace\negthickspace\negthickspace\negthickspace\negthickspace\negthickspace\negthickspace\leq\sum_{l=1}^{n_{\lambda}}\frac{1}{\gamma^{p\left|\omega_{l,2}\right|}}\frac{\underset{\xi\in\Omega_{l}}{\prod}\left|b_{\xi}\left(\lambda_{N+k}^{j}\right)\right|^{p}}{\underset{i\neq N+k}{\text{max}}\left|b_{\lambda_{i}^{j}}\left(\lambda_{N+k}^{j}\right)\right|^{p}}\cdot\frac{1}{\text{Im}\left(\lambda_{N+k}^{j}\right)}\\
 &  & \negthickspace\negthickspace\negthickspace\negthickspace\negthickspace\negthickspace\negthickspace\negthickspace\negthickspace\negthickspace\negthickspace\negthickspace\negthickspace\negthickspace\negthickspace\negthickspace\negthickspace\negthickspace\negthickspace\negthickspace\negthickspace\negthickspace\negthickspace\lesssim\frac{N}{\text{Im}\left(\lambda_{N+K}^{j}\right)},
\end{eqnarray*}
where $\Omega_{l}=\{\lambda_i^j:i=0,\ldots,N+k-1\}\setminus\omega_{l,1}$
are subsets of $\tilde{\tau}_{j}$. The last of the above inequalities
comes from the observation that $\Omega_l$ contains at least: 
\[
N+k-\left|\omega_{l,1}\right|\geq N+k-\left(n_{\lambda}-1\right)\geq N+k-\left(N-1\right)=k+1\geq1
\]
 points. We deduce that $a^{j}\in X_{\tau}^{p}(\Lambda,N)$ and that
its norm is uniformly bounded. Now, since $R_{\Lambda}$ is onto,
there is $f^{j}\in PW_{\tau}^{p}$ such that $f^{j}|\Lambda=a^{j}$
and 
\[
\left\Vert f^{j}\right\Vert _{PW_{\tau}^{p}}\lesssim\left\Vert a^{j}\right\Vert _{X_{\tau}^{p}(\Lambda,N)}\lesssim1.
\]
Setting $\tilde{f}^{j}:=e^{i\tau\cdot}f^{j}$, it follows from the
Plancherel-Poly\`a inequality that $\tilde{f}^{j}\in H_{+}^{p}$
and since $\Lambda_{a}^{+}$ is $\left(N+k+1\right)-$Carleson in
$\mathbb{C}^{+}$, Theorem (\ref{thm Hartmann}) implies in particular
that
\[
\text{Im}(\lambda_{N+K}^{j})\left|\Delta_{\tilde{\tau}_{j}}^{N+k}\left(\tilde{f}^{j}\left(\left(\lambda^{j}\right)^{\left(N+k+1\right)}\right)\right)\right|^{p}\lesssim\left\Vert f^{j}\right\Vert \lesssim1.
\]
But by construction, we have 
\[
\text{Im}(\lambda_{N+K}^{j})\left|\Delta_{\tilde{\tau}_{j}}^{N+k}\left(\tilde{f}^{j}\left(\left(\lambda^{j}\right)^{\left(N+k+1\right)}\right)\right)\right|^{p}=\frac{1}{\underset{i\not=N+k}{\max}\rho\left(\lambda_{N+k}^{j},\lambda_{i}^{j}\right)}
\]
which tends to $\infty$, $j\to\infty$ because $\text{diam}_{\rho}\tilde{\tau}_{j}$
tends to $0$, $j\to\infty$, which gives the required contradiction.
\end{proof}
The following corollary to the previous lemma allows us to end the
proof of our theorem.
\begin{cor}
If $R_{\Lambda}$ is an isomorphism from $PW_{\tau}^{p}$ onto $X_{\tau}^{p}(\Lambda,N)$
and $\Lambda_{a}^{+}$ is $\left(N+k+1\right)-$Carleson, $k\ge0$,
then $\Lambda_{a}^{+}$ is $\left(N+k\right)-$Carleson.\end{cor}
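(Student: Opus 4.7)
The plan is to combine the preceding lemma with the freedom to refine the partition afforded by Proposition \ref{Ncarleson-CG} and Remark \ref{rem:diameters}. Inspection of that lemma shows that the constant $\vartheta>0$ depends only on $p$, $\tau$, $N+k$ and the isomorphism constants of $R_\Lambda$, and not on the particular decomposition witnessing the $(N+k+1)$-Carleson property: the quantitative contradiction produced there is driven purely by Theorem \ref{thm Hartmann} and the norm of $R_\Lambda^{-1}$.

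Assuming this universal character of $\vartheta$, I would first use Proposition \ref{Ncarleson-CG} together with Remark \ref{rem:diameters} to choose a refined decomposition
\[
\Lambda_a^+=\bigcup_n \tilde{\tau}_n
\]
which still satisfies the Generalized Carleson condition (with some $\tilde{\delta}>0$) and for which $\sup_n\text{diam}_\rho(\tilde{\tau}_n)<\vartheta/2$. Concretely, the $\tilde{\tau}_n$ are obtained as intersections of $\Lambda_a^+$ with the connected components of $L(B_{\Lambda_a^+},\varepsilon)$ for $\varepsilon>0$ chosen sufficiently small, exactly as in Remark \ref{rem:diameters}. Since $\Lambda_a^+$ is $(N+k+1)$-Carleson as a sequence (a property that only concerns the underlying set, not a particular decomposition), we automatically have $\sup_n|\tilde{\tau}_n|\leq N+k+1$.

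Then I would apply the preceding lemma to this refined decomposition: if some $\tilde{\tau}_{n_0}$ had size exactly $N+k+1$, the lemma would force $\text{diam}_\rho(\tilde{\tau}_{n_0})>\vartheta$, contradicting the construction. Hence $|\tilde{\tau}_n|\leq N+k$ for every $n$, and applying Proposition \ref{Ncarleson-CG} in the reverse direction yields that $\Lambda_a^+$ is $(N+k)$-Carleson, which is the content of the corollary. Combined with the $N'$-Carleson bound established in the first part of the proof of Theorem \ref{Thm : necessity}, iterating the corollary finitely many times delivers $N'\leq N$.

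The main obstacle is to justify rigorously the decomposition-independence of $\vartheta$. I expect this to follow from a careful rereading of the lemma's proof: the key estimate
\[
\frac{1}{\displaystyle\max_{i\neq N+k}\rho(\lambda_{N+k}^j,\lambda_i^j)}\lesssim 1
\]
arises as $\|f^j\|_{PW_\tau^p}\lesssim \|a^j\|_{X_\tau^p(\Lambda,N)}\lesssim 1$ combined with the Hartmann surjectivity theorem, and none of the implicit constants depend on how one chose the groups $\tau_n^{k+1}$. Thus the same threshold $\vartheta$ controls any decomposition witnessing the $(N+k+1)$-Carleson property, which is precisely what powers the refinement argument above.
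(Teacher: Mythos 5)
Your argument takes a genuinely different route from the paper's, and it founders at exactly the point you flag as ``the main obstacle'': the threshold $\vartheta$ produced by the preceding lemma is \emph{not} independent of the decomposition. In the lemma's proof the upper bound
\[
\left\Vert a^{j}\right\Vert _{X_{\tau}^{p}(\Lambda,N)}^{p}\lesssim1
\]
is obtained by estimating, for $\mu\in\omega_{l,2}$ (the points of $\sigma_{\lambda}$ lying \emph{outside} the group $\tilde{\tau}_{j}$), $\left|b_{\mu}(\lambda_{N+k}^{j})\right|\geq\gamma$, where $\gamma$ is the separation constant $\inf_{n\neq m}\delta(\tau_{n}^{k+1},\tau_{m}^{k+1})$ of the chosen decomposition; the resulting constant carries a factor $\gamma^{-p\left|\omega_{l,2}\right|}$, so $\vartheta$ shrinks with $\gamma$. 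There is no decomposition-free substitute for this step, because two distinct points of an $N$-Carleson sequence with $N\geq2$ may be arbitrarily close. Now, when you refine the decomposition via Remark \ref{rem:diameters} (components of $L(B,\varepsilon)$ for smaller $\varepsilon$) so as to force $\sup_{n}\mathrm{diam}_{\rho}(\tilde{\tau}_{n})<\vartheta/2$, you simultaneously drive $\gamma$ down: groups that have just been split apart are separated by a distance comparable to the diameters you are trying to control. The inequality $\sup_{n}\mathrm{diam}_{\rho}(\tilde{\tau}_{n})<\vartheta(\gamma)/2$ your argument needs is therefore circular, and one expects it to fail for every $\varepsilon$ (roughly, $\sup_{n}\mathrm{diam}_{\rho}\asymp\gamma$ while $\vartheta\lesssim\gamma^{N}$).

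The paper closes the corollary in the opposite direction: it keeps the \emph{given} decomposition $\{\tau_{n}^{k+1}\}$ and reads the lemma positively. Letting $Z$ be the set of indices with $\left|\tau_{n}^{k+1}\right|=N+k+1$, each such group has $\mathrm{diam}_{\rho}>\vartheta$, so its points can be ordered so that the last one, $\lambda_{N+k+1}^{n}$, satisfies $\rho(\lambda_{i},\lambda_{N+k+1}^{n})\geq\vartheta/(2(N+k))$ for all other $i$. Deleting these points leaves groups of size at most $N+k$, and the deleted points, being uniformly $\rho$-separated both from their former groups and from the other groups, are adjoined as singletons; the new family still satisfies the Generalized Carleson condition, whence $\Lambda_{a}^{+}$ is $(N+k)$-Carleson by Proposition \ref{Ncarleson-CG}. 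To rescue your refinement scheme you would need a version of the lemma with $\vartheta$ uniform over all admissible decompositions, which the present proof does not supply.
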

\begin{proof}
We write $\Lambda_{a}^{+}=\bigcup_{n\geq1}\tau_{n}^{k+1}$ with $|\tau_{n}^{k+1}|\leq N+k+1$.
Let us suppose that there are infinitely many $n$ for which we have
$|\tau_{n}^{k+1}|=N+k+1$ and let $Z$ be the set of such $n$. Because
of the previous lemma, we can find $\vartheta>0$ such that $\text{diam}_{\rho}(\tau_{n}^{k+1})>\vartheta$
for $n\in Z$. Then, for every $n\in Z$, it is possible to write
$\tau_{n}^{k+1}=\left\{ \lambda_{i}^{n}:\; i=1,\ldots,N+k+1\right\} $
such that 
\[
\rho\left(\lambda_{i},\lambda_{N+k+1}^{n}\right)\geq\frac{\vartheta}{2\left(N+k\right)},\quad i=1,\ldots,N+k.
\]
It follows that 
\[
\Lambda_{a}^{+}=\bigcup_{n\not\in Z}\tau_{n}^{k+1}\cup\left(\bigcup_{n\in Z}\tau_{n}^{k+1}\setminus\left\{ \lambda_{N+k+1}^{n}\right\} \right)\cup\left(\bigcup_{n\in Z}\left\{ \lambda_{n+k+1}^{n}\right\} \right)
\]
is a disjoint union of sets $\sigma_{n}$ with $\left|\sigma_{n}\right|\leq N+k$
and it can be shown that the sequence of Blascke products $\left(B_{\sigma_{n}}\right)_{n}$
satisfies the Generalized Carleson condition and hence that $\Lambda_{a}^{+}$
is $\left(N+k\right)-$Carleson.
\end{proof}

\thanks{I would like to thank Andreas Hartmann for his very helpful and permanent
support during this research and, more generally, from the beginning
of my thesis.}

\textsc{\small Equipe d'Analyse, Institut de Math\'ematiques de Bordeaux, Universit\'e Bordeaux 1, 351 cours de la Lib\'eration 33405 Talence C\'edex,
France.}{\small \par}

\emph{\bigskip}

\emph{E-mail address}: $\texttt{frederic.gaunard@math.u-bordeaux1.fr}$

\begin{thebibliography}{References}
\bibitem[1]{Al97} $\textsc{A.B. Aleksandrov}$, \emph{A simple proof
of a theorem of Volberg and Treil on the embedding of coinvariant
subspaces of the shift operator, }J. Math. Sci. \textbf{85}-2 (1997),
1773-1778.

\bibitem[2]{AI01}$\textsc{S.A. Avdonin and S.A. Ivanov}$, \emph{Exponential
Riesz bases of subspaces and divided differences, }St. Petersbourg.
Math. J. \textbf{93}-3 (2001), 339-351.

\bibitem[3]{BNO96}$\textsc{J. Bruna, A. Nicolau and K. Oyma}$, \emph{A
note on interpolation in the Hardy spaces of the unit disc, }Proc.
Amer. Math. Soc. \textbf{124}-4 (1996), 1197-1204.

\bibitem[4]{Ca58}$\textsc{L. Carleson}$, \emph{An interpolation
problem for bounded analytic functions, }Amer. J. Math. \textbf{80}
(1958), 921-930.

\bibitem[5]{Gar81}$\textsc{J.B. Garnett}$, \emph{Bounded analytic
functions (Revised first edition), }Graduate Texts in Math. \textbf{236
}(2007), Springer-Verlag. First edition in Pure and applied Mathematics
\textbf{86} (1981), Academic Press.

\bibitem[6]{Gau11}$\textsc{F. Gaunard}$, \emph{Probl\`emes d'Interpolation
dans les Espaces de Paley-Wiener et Applications en Th\'eorie du Contr\^ole,
}Th\`ese de l'Universit\'e Bordeaux 1 (2011).

\bibitem[7]{Ha96a} $\textsc{A. Hartmann}$, \emph{Interpolation libre
et caract\'erisation des traces des fonctions holomorphes sur les r\'eunions
finies de suites de Carleson, }Th\`ese de l'Universit\'e Bordeaux 1 (1996).

\bibitem[8]{Ha96b}$\textsc{A. Hartmann}$, \emph{Une approche de
l'interpolation libre g\'en\'eralis\'ee par la th\'eorie des op\'erateurs et
caract\'erisations des traces $H^{p}|\Lambda$, }J. Operator Theory
\textbf{35}-2 (1996), 281-316.

\bibitem[9]{HMW73}$\textsc{R. Hunt, B. Muckenhoupt and R. Wheeden}$,
\emph{Weighted norm inequalities for the conjugate Hilbert transform,
}Proc. Amer. Math. Soc. \textbf{176 }(1973), 227-251.

\bibitem[10]{HNP81}$\textsc{S.V. Hruscev, N.K. Nikolskii and B.S Pavlov}$,
\emph{Unconditional bases of exponentials and of reproducing kernels
}in Complex analysis and spectral theory, Lectures Notes in Math.
\textbf{864} (1981), 214-335.

\bibitem[11]{Le96}$\textsc{B.Y. Levin}$, \emph{Lectures on entire
functions,} Math. Monographs \textbf{150} (1996), Amer. Math. Soc.

\bibitem[12]{LS97}$\textsc{Y.L. Lyubarskii and K. Seip}$, \emph{Complete
interpolating sequences for Paley-Wiener spaces and Muckenhoupt's
$(A_{p})$ condition, }Rev. Mat. Iber. \textbf{13}-2 (1997), 361-376.

\bibitem[13]{Mi92}$\textsc{A.M. Minkin}$, \emph{The reflection of
indices and unconditionnal bases of exponentials, }St. Petersburg
Math. J. \textbf{3}-5 (1992), 1043-1064.

\bibitem[14]{Ni86}$\textsc{N.K. Nikolskii}$, \emph{A treatise on
the shift operator, }Grundlehren der mathematischen Wissenschaften
\textbf{273} (1986), Springer-Verlag.

\bibitem[15]{Ni02a}$\textsc{N.K. Nikolskii}$, \emph{Operators, functions
and systems: An easy reading, volume 1, }Mathematical Surveys and
Monographs \textbf{92} (2002), Amer. Math. Soc.

\bibitem[16]{Ni02b}$\textsc{N.K. Nikolskii}$, \emph{Operators, functions
and systems: An easy reading, volume 2, }Mathematical Surveys and
Monographs \textbf{93} (2002), Amer. Math. Soc.

\bibitem[17]{Se98}$\textsc{K. Seip}$, \emph{Developments from nonharmonic
Fourier series}. Proceedings of the International Congress of Mathematicians,
Doc. Math. Extra Vol. \textbf{II} (1998) , 713--722.

\bibitem[18]{Se04}$\textsc{K. Seip}$, \emph{Interpolation and sampling
in spaces of analytic functions, }Univ. Lect. Series \textbf{33 }(2004),
Amer. Math. Soc.

\bibitem[19]{SS61}$\textsc{H.S. Shapiro and A.L. Shields}$, \emph{On
some interpolation problems for analytic functions, }Amer. J. Math.
\textbf{83} (1961), 513-532.

\bibitem[20]{TV95}$\textsc{S.R. Treil and A.L. Volberg}$, \emph{Weighted
embeddings and weighted norm inequalities for the Hilbert transform
and the maximal operator}, Algebra i Analiz \textbf{7}-6 (1995), 205--226;
translation in St. Petersburg Math. J. \textbf{7}-6 (1996), 1017\textendash{}1032.

\bibitem[21]{Va84}$\textsc{V.I. Vasyunin}$, \emph{Traces of bounded
analytic functions on finite unions of Carleson sets}, J. Soviet Math.
\textbf{27}-1 (1984), 2448-2450.$\\$

\end{thebibliography}
\end{document}